
\documentclass[a4paper,oneside,11pt]{article}

\addtolength\topmargin{-.5in} 
\addtolength\textheight{1.in} 
\addtolength\oddsidemargin{-.045\textwidth} 
\addtolength\textwidth{.09\textwidth} 
\linespread{1.09}

\tolerance=1414 
\setlength\emergencystretch{1.5em}
\hbadness=1414 
\setlength\hfuzz{.3pt}
\widowpenalty=10000 
\raggedbottom 
\setlength\vfuzz{.3pt}

\usepackage[utf8]{inputenc} 
\usepackage[T1]{fontenc} 
\usepackage{textcomp} 
\usepackage[english]{babel}
\usepackage[autolanguage]{numprint} 
\usepackage{hyperref} 
\usepackage{graphicx} 
\usepackage{verbatim}
\usepackage[all]{xy}
\usepackage{dsfont}
\usepackage{amsmath,amssymb,amsthm} 
\usepackage{lmodern}
\usepackage{ mathrsfs }

\renewcommand\epsilon\varespilon 
\renewcommand\phi\varphi

\newcommand\NN{\mathbb{N}}

\newcommand\RR{\mathbb{R}}
\newcommand\CC{\mathbb{C}} 

\newcommand\PP{\mathbb{P}}
\newcommand\EE{\mathbb{E}}

\newcommand\MM{\mathbb{M}}
\newcommand\tr{\mathrm{tr}}
\newcommand\Car{\mathds{1}}
\newcommand\bornesup{\limsup_{N\to +\infty} \frac{1}{N^{1+2/p}} \log}

\newcommand\bornesupalpha{\limsup_{N\to +\infty} N^{-\alpha\left(\frac{1}{2}+\frac{1}{p}\right) } \log}

\newcommand \bornexpoalpha{\lim_{N\to +\infty} N^{-\alpha\left(\frac{1}{2}+\frac{1}{p}\right) }  \log }
\newcommand\veps{\varepsilon}

\newcommand\M{\mathcal{M}}
\newcommand\Card{\mathrm{Card}}
\newcommand\supp{\mathrm{supp}}
\newcommand\sg{\mathrm{sg}}

\swapnumbers 
\theoremstyle{definition} 
\newtheorem{Def}{Definition}[section] 
\theoremstyle{plain} 
\newtheorem{Pro}[Def]{Proposition} 
\newtheorem{Lem}[Def]{Lemma} 
\newtheorem{The}[Def]{Theorem} 
\newtheorem{Cor}[Def]{Corollary}
 
\theoremstyle{remark}

\newtheorem{Rem}[Def]{Remark}


\title{On the large deviations of traces of random matrices} 
\author{Fanny Augeri } 
\date{\today}


\begin{document}

\maketitle

\section*{Abstract}
We present large deviations principles for the moments of the empirical spectral measure of Wigner matrices and empirical measure of $\beta$-ensembles in three cases : the case of Wigner matrices without Gaussian tails, that is Wigner matrices whose entries have tail distributions decreasing as $e^{-ct^{\alpha}}$, for some constant $c>0$ and with $\alpha \in (0,2)$, the case of Gaussian Wigner matrices, and the case of $\beta$-ensembles associated with a convex  potential with polynomial growth.

\section{Introduction and main results}

The study of the traces of random matrices is now a classical tool to understand the behavior of their spectrum. From the original proof of Wigner's theorem by the moments method (see \cite{Wigner}), to the universality results at the edge of Hermitian or covariance random matrices (see for example \cite{Soshnikovuniv}, \cite{Sodin}), 'Wigner traces method' has proven extremely effective in the macroscopic, as well as the microscopic study of the spectrum of random matrices.

Starting from Wigner's theorem, which asserts that for a standard Wigner matrix whose entries are centered and have finite moments, the moments of the empirical spectral measure, or equivalently the normalized traces, converges almost surely to $0$, for odd moments, and to the Catalan numbers, for even moments, one can ask about the deviations of these moments around their respective limit value. 

The fluctuations of the traces of random matrices have been extensively studied, usually as a first step to get the fluctuations of the linear statistics of the eigenvalues. Originally proven in the context of Wishart matrices in \cite{Jonsson}, a central limit theorem for the moments of the empirical spectral measure of standard Wigner matrices can be found in \cite[Theorem 2.1.31]{Guionnet}, following Jonsson's strategy of using the moments method and combinatorial techniques. Due to the repulsion of the eigenvalues, one has to multiply by a factor $N$ - instead of $\sqrt{N}$ in the case of independent variables - to see the fluctuations of the centered moments. The development of the combinatorial approach culminated in \cite{Sinai1}, \cite{Sinai2}, in which the authors show a CLT for the $p^{\text{th}}$ moment with $p$ growing with $N$, $p\ll N^{2/3}$, as well as multivariate version of the CLT for moments, in the case of standard Wigner matrices with symmetric and sub-Gaussian entries.

Regarding the deviations of the moments of the empirical spectral measure, we know from \cite[section 3.1]{Ledouxbanach}, that the $p$-Schatten norm of Gaussian Hermitian or symmetric matrices is sub-Gaussian. Still in the Gaussian case, the estimates of moments of Gaussian chaos of \cite{Latala} can also provide some concentration inequalities for the moments of the empirical spectral measure. Concentration inequalities for truncated traces of convex perturbation of the GUE multi-matrix model can be found in \cite{GuionnetFlour}.
More generally, we know by \cite{Meckes}, that if the entries of $X$ are bounded or satisfies some logarithmic Sobolev inequalities, then the normalized traces of powers of $X$, say $\tr_N (X/\sqrt{N})^p$, satisfies a concentration inequality with speed $N^{1+2/p}$. This gives an indication, at least in the case where the entries of $X$ are bounded or satisfy logarithmic Sobolev inequalities, of the speed of the large deviations of the moments of the empirical spectral measure around the Catalan numbers.

Note that since the map which associates to a probability measure on $\RR$, its $p^{\text{th}}$ moment is not continuous for the weak topology, one cannot derive, by a contraction principle, large deviations principles for the $p^{\text{th}}$ moment of the empirical spectral measure, from the already known large deviations principles for the empirical spectral measure, like in the case of the GUE or GOE due to \cite{BenArous}, or in the case of Wigner matrices without Gaussian tails due to \cite{Bordenave}.

Moderate deviations of certain traces of convex perturbation of the GUE multi-matrix model have been investigated in \cite{Eichel}. 
In the case where the entries are not centered, some results of large deviations for the moments of the empirical spectral measure are known.  In the case of symmetric Bernoulli matrices, we know by \cite[Theorem 1.5]{Eichel2} that the centered traces satisfy moderate deviations principles with an explicit rate function. A large deviations principle for the traces of Bernoulli matrices is derived in \cite[Theorem 4.1]{Chatterjee2}, as a consequence of the large deviations principle of Erdös-Renyi graphs with parameter $p$ independent of $N$, with respect to the cut metric.

\subsection{Main results}
The aim of this paper is to derive large deviations principles for the moments of the empirical (spectral) measure in three cases : the case of $\beta$-ensembles for convex potential with polynomial growth in section \ref{betaens}, the case of Gaussian Wigner matrices in section \ref{Gaussian}, and the case of Wigner matrices without Gaussian tails in section \ref{noGaussian}.

We recall that a sequence of random variables $(Z_n)_{n\in \NN}$ taking value in some topological space $\mathcal{X}$ equipped with the Borel $\sigma$-field $\mathcal{B}$, follows a large deviations principle (LDP) with speed $\upsilon : \NN\to \NN$, and rate function $J : \mathcal{X} \to [0, +\infty]$, if $J$ is lower semicontinuous and $\upsilon$ increases to infinity and for all $B\in \mathcal{B}$,
$$- \inf_{B^{\circ}}J \leq \liminf_{n\to +\infty} \frac{1}{\upsilon(n)} \log \PP\left(Z_n \in B\right) \leq \limsup_{n\to +\infty} \frac{1}{\upsilon(n)}\log\PP\left(Z_n \in B\right) \leq -\inf_{\overline{B}  } J,$$
where $B^{\circ}$ denotes the interior of $B$ and $\overline{B}$ the closure of $B$. We recall that $J$ is lower semicontinuous if its $t$-level sets $\{ x \in \mathcal{X} : J(x) \leq t \}$ are closed, for any $t\in [0,+\infty)$. Furthermore, if all the level sets are compact, then we say that $J$ is a good rate function.

We define the $\beta$-ensemble associated with the potential $V$ as the following probability measure on $\RR^N$,
\begin{equation} \label{defbeta} d\PP_{V, \beta}^N =\frac{1}{Z^N_{V}} \prod_{i<j}\left|\lambda_i- \lambda_j\right|^{\beta} e^{-N\sum_{i=1}^N V\left(\lambda_i\right)}\prod_{i=1}^N d\lambda_i,\end{equation}
where $Z^N_{V,\beta}$ is the partition function, that is,
\begin{equation} \label{partition func} Z^N_{V, \beta} = \int  \prod_{i<j}\left|\lambda_i- \lambda_j\right|^{\beta} e^{-N\sum_{i=1}^N V\left(\lambda_i\right)}\prod_{i=1}^N d\lambda_i.\end{equation}
To make sense of $\PP_{V, \beta}^N$, it is usually assumed that $V$ is a continuous function such that there is some $\beta'>1$, $\beta'\geq \beta$, such that 
\begin{equation} \label{assumpgene} \liminf_{|x| \to +\infty} \frac{V(x)}{\beta' \log |x|} >1.\end{equation}
It is known (see \cite[Theorem 2.6.1]{Guionnet} or \cite{BenArous}),  that the empirical measure
$$ L_N = \frac{1}{N} \sum_{i=1}^N \delta_{\lambda_i },$$
follows, under $\PP_{V, \beta}^N$, a LDP with respect to the weak topology, with speed $N^2$, and good rate function $I_{\beta}^V$. Furthermore, $I_{\beta}^V$ achieves its minimum at a unique probability measure $\sigma_{\beta}^V$, called the equilibrium measure, which is compactly supported (see \cite[Lemma 2.6.2]{Guionnet}).

In the case of $\beta$-ensembles associated with a convex potential with polynomial growth, the following holds.

\begin{The}\label{ldp1}
Let $\alpha\geq 2$ and $\beta>0$. Let
\begin{equation} \label{assumpV} \forall x\in \RR, \ V(x) = b|x|^{\alpha} + w(x),\end{equation}
where $w$ is a continuous convex function such that $w(x) = o_{\pm \infty}(|x|^{\alpha})$. 
Let $p\in \NN$, $p> \alpha$. For any $\lambda_1,...,\lambda_N \in \RR^N$, we denote by $m_{p,N}$,
$$m_{p,N} = \frac{1}{N} \sum_{i=1}^N \lambda_i^p.$$
Under $\PP^N_{V, \beta}$, the sequence $\left(m_{p, N}\right)_{N\geq 1}$ satisfies a large deviations principle with speed $N^{1+\frac{\alpha}{p}}$ and good rate function $J_p$, where $\PP^N_{V,\beta}$ is defined in \eqref{defbeta}.  If $p$ is even,
$$
J_p(x) =  
\begin{cases}
b\left(x-\langle \sigma_{\beta}^V, x^p\rangle\right)^{\frac{\alpha}{p}}& \text{ if } x\geq \langle \sigma_{\beta}^V, x^p\rangle, \\
 +\infty &\text{ otherwise,}
\end{cases}$$
where $\langle\sigma_{\beta}^V, x^p\rangle$ denotes the $p^{\text{th}}$ moment of the equilibrium measure of  $\PP_{V, \beta}^N$, and if $p$ is odd, $J_p$ is defined by,
$$\forall x \in \RR, \ J_p(x) = b\left|x-\langle\sigma_{\beta}^V, x^p\rangle\right|^{\frac{\alpha}{p}}.$$
\end{The}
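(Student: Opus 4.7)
The plan is to show that the large deviations of $m_{p,N}$ at the speed $N^{1+\alpha/p}$ are driven by a single atypically large eigenvalue, with the rest of the spectrum staying close to the equilibrium configuration $\sigma_{\beta}^{V}$. Set $m_p^\star = \langle \sigma_{\beta}^{V}, x^p\rangle$. The heuristic is that to realize $m_{p,N} \approx x > m_p^\star$, the cheapest strategy is to promote one eigenvalue to scale $t \approx (N(x-m_p^\star))^{1/p}$, whose confining cost is of order $\exp(-NV(t)) \approx \exp(-bN^{1+\alpha/p}(x-m_p^\star)^{\alpha/p})$. Competing strategies are strictly more expensive: redistributing the excess over $k$ large eigenvalues costs a factor $k^{1-\alpha/p}$ more, since $p>\alpha$ gives $1-\alpha/p>0$, while a macroscopic deformation of $L_N$ has cost of order $N^2$, which at the finer speed $N^{1+\alpha/p} = o(N^2)$ yields an infinite rate.

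For the upper bound, fix $R$ beyond the support of $\sigma_{\beta}^{V}$ and split $m_{p,N} = m_{p,N}^{(R)} + \frac{1}{N}\sum_i \lambda_i^p \Car_{|\lambda_i| > R}$. The truncated moment $m_{p,N}^{(R)}$ is a continuous functional of $L_N$ restricted to $[-R,R]$, so by the LDP for $L_N$ at speed $N^2$ \cite[Theorem 2.6.1]{Guionnet}, any deviation of $m_{p,N}^{(R)}$ away from $m_p^\star$ has cost of order $e^{-cN^2}$, which is negligible at the finer speed. The remainder is controlled by a tail estimate on a single eigenvalue: integrating out $N-1$ variables in \eqref{defbeta} and using that $V(t)=b|t|^\alpha(1+o(1))$ dominates both the Vandermonde weight $\beta \log t$ and $|w(t)|$ as $t \to \infty$, one obtains $\PP(\lambda_1 \geq t)\leq \exp(-Nbt^\alpha(1-\varepsilon))$ uniformly for $t \geq R$. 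A union bound over the number and positions of outliers, combined with the convexity of $u \mapsto u^\alpha$ to rule out multi-outlier strategies, then yields the matching upper bound with $t=(N(x-m_p^\star-\delta))^{1/p}$, and $\delta \to 0$ closes the gap.

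For the lower bound, fix $\delta>0$ small and set $t=(N(x-m_p^\star+\delta))^{1/p}$. Consider the event $E_N$ on which $\lambda_1\in[t,t+1]$ and $\tfrac{1}{N-1}\sum_{i\geq 2}\delta_{\lambda_i}$ lies in a small weak-neighborhood of $\sigma_{\beta}^{V}$. On $E_N$, $m_{p,N}$ is close to $x$, so it suffices to lower bound $\PP(E_N)$. Separating $\lambda_1$ from the bulk in \eqref{defbeta}, $\PP(E_N)$ factorizes (up to partition function ratios) into $\int_t^{t+1}e^{-NV(\lambda_1)}d\lambda_1 = \exp(-NV(t)(1+o(1))) = \exp(-b N^{1+\alpha/p}(x-m_p^\star+\delta)^{\alpha/p}(1+o(1)))$; the Vandermonde interaction $\prod_{j\geq 2}|\lambda_1-\lambda_j|^\beta$ bounded below on the event by $(t/2)^{\beta(N-1)} = \exp(O(N\log N))=\exp(o(N^{1+\alpha/p}))$; and the bulk probability bounded below by the $N^2$-speed LDP against the $(N-1)$-particle $\beta$-ensemble (with an asymptotically harmless rescaling $\tfrac{N}{N-1}V$ of the potential), again $\exp(o(N^{1+\alpha/p}))$. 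Letting $\delta\downarrow 0$ concludes.

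The main technical obstacle is the upper bound, specifically making the single-eigenvalue tail estimate uniform and iterating it to handle configurations with several large eigenvalues, while tracking the precise cancellations with the normalization $Z^N_{V,\beta}$. The convexity of $V$ should allow bounding the Vandermonde interaction in terms of the confining weight, and the assumption $w=o(|x|^\alpha)$ absorbs $w$ into a $(1+o(1))$ factor in every exponential estimate. For odd $p$, applying the same analysis to the negative tail yields the two-sided rate $b|x-m_p^\star|^{\alpha/p}$, while for even $p$ the event $\{m_{p,N}\leq x\}$ with $x<m_p^\star$ demands a macroscopic perturbation of $L_N$, whose cost of order $N^2$ exceeds $N^{1+\alpha/p}$ and accounts for the rate $+\infty$.
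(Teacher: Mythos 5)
Your overall heuristic (one outlier at scale $t\approx(N(x-m_p^\star))^{1/p}$ is cheapest, the bulk stays near $\sigma_\beta^V$, macroscopic deformations cost $N^2$) matches the paper, and your lower bound is essentially identical to the one in the paper: split off $\lambda_1$ in a window around $t$, bound the Vandermonde interaction with the bulk by $t^{\beta(N-1)}=e^{O(N\log N)}$, control the bulk by the $N^2$-speed LDP (together with the boundedness control from the analogue of Proposition \ref{probatrou}), and use $V(t)=b|t|^\alpha(1+o(1))$ to evaluate $\int_t^{t+1}e^{-NV}$. That part is fine.

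The genuine gap is in the upper bound, and you yourself flag it: ``making the single-eigenvalue tail estimate uniform and iterating it to handle configurations with several large eigenvalues, while tracking the precise cancellations with the normalization $Z^N_{V,\beta}$'' is not a remark, it is the entire difficulty. First, your single-eigenvalue tail bound $\PP(\lambda_1\geq t)\leq e^{-Nbt^\alpha(1-\varepsilon)}$ ``uniformly for $t\geq R$'' is not a free consequence of integrating out $N-1$ variables: one must control the ratio $Z^{N-k}_{NV/(N-k)}/Z^N_{V,\beta}$ to within $e^{O(Nk)}$ (Lemma \ref{encadrefuncpart}) and absorb the outlier--bulk Vandermonde into the confining potential; this is the content of Proposition \ref{probatrou}, and it delivers only a non-explicit constant $C$, not the sharp $b(1-\varepsilon)$. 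Second, the ``union bound over the number and positions of outliers'' has to be taken over a \emph{continuum} of outlier scales, and the subadditivity of $u\mapsto u^{\alpha/p}$ (it is this concavity, not the convexity of $u\mapsto u^\alpha$, that rules out multi-outlier configurations) applied pointwise does not by itself control the iterated integral $\int_{\sum t_i^p\geq Ns}\prod e^{-NV(t_i)}\,dt_i$ together with the $\binom{N}{k}$ entropy and the outlier--outlier interaction. The paper's route to a rigorous upper bound is quite different from what you sketch: it first shows, via Bobkov's concentration inequality for $\alpha$-convex potentials (which is exactly where the hypothesis $V=b|x|^\alpha+w$ with $w$ convex enters) combined with a geometric slicing of the region $\{M_0\leq|\lambda|\leq R\}$ into $\log\log N$ shells and Proposition \ref{probatrou} on each shell, that the contribution of all but the $\log N$ largest eigenvalues concentrates at a speed faster than $N^{1+\alpha/p}$ (Propositions \ref{conckconv}, \ref{concent}, \ref{expoequiv}); it then compares the remaining $T_{p,N}$ with an i.i.d.\ sample from $e^{-NV}dx/Z_N$ and runs a Chernoff bound with a carefully tuned exponent $\alpha t_N=\alpha(1-1/(\log N)^2)$ (Lemma \ref{ldp2}) to extract the sharp constant $b$. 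Your replacement of the concentration step by the $N^2$-LDP applied to a bounded truncation $\langle L_N,f_R\rangle$ is a clean idea and plausibly correct, but as written it leaves the whole tail $\frac{1}{N}\sum_i\lambda_i^p\Car_{|\lambda_i|>R}$ to the vague union-bound step, so the upper bound is not proved.

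A minor remark on the lower bound: with $t=(N(x-m_p^\star+\delta))^{1/p}$ the event produces $m_{p,N}\approx x+\delta$, so you must also impose $|\lambda_i|\leq M$ for $i\geq 2$ (otherwise the bulk $p$-th moment need not converge, $x^p$ being unbounded) and land the event inside the given open set before letting $\delta\to 0$; the paper's lower bound does this via Proposition \ref{probatrou} with $k=1$.
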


\begin{Rem} The rate function in Theorem \ref{ldp1} is the same as the rate function of the LDP of $$(\langle \sigma_{\beta}^V, x^p\rangle + \frac{1}{N}\sum_{i=1}^N X_i^p)_{N\in \NN},$$ where $(X_i)_{i\geq1}$ are i.i.d random variables with law $e^{-NV(x)}dx/Z_V$, where we denote $Z_V = \int e^{-NV(x)} dx$ (see Lemma \ref{ldp2}). This indicated that the logarithmic interaction between the particle of the Coulomb gas become negligible when one is considering large deviations of $m_{p,N}$.
\end{Rem}

\begin{Rem}
One can also derive a large deviations principle of the even moments of the empirical measure, say $m_{2p,N}$, under  $\langle \sigma_{\beta}^V, x^{2p}\rangle$, with speed $N^2$. Indeed, the proof of the large deviations of the empirical measure yields the asymptotics of the partition function $Z^N_{V,\beta}$ at the exponential scale $N^2$  (see \cite[Theorem 2.6.1]{Guionnet}). But the scaled logarithmic moment generating function of $m_{2p,N}$ at some $t<0$, is finite, and is actually equal to the partition function $Z^N_{V -tx^{2p}, \beta}$, associated with the potential $V-tx^{2p}$. Gärtner-Ellis theorem (see \cite[Theorem 2.3.6]{Zeitouni}), thus yields a large deviations principle with speed $N^2$ of $\langle L_N, x^{2p}\rangle$ on $(-\infty, \langle \sigma_{\beta}^V, x^{2p}\rangle )$. 
\end{Rem}

Let us introduce now the model of Wigner matrices. The Wigner matrices and the $\beta$-ensembles are linked through the GOE, GUE and GSE, which form a $\beta$-ensemble for a quadratic potential and $\beta=1,2,4$ respectively. More generally, let $(X_{i,j})_{i<j}$ be independent and identically distributed (i.i.d) complex-valued centered random  variables, and let $(X_{i,i})_{i\geq 1}$ be i.i.d real-valued centered random variables such that for any $k\in \NN$,
$$ \max\left( \EE|X_{1,1}|^k,\EE|X_{1,2}|^k \right) < +\infty.$$
Let $X(N)$ be the $N \times N$ Hermitian matrix with up-diagonal entries $(X_{i,j})_{1\leq i \leq j\leq N}$. We call such a sequence $(X(N))_{N\in \NN}$, a Wigner matrix. In the following, we will drop the $N$ and write $X$ instead of $X(N)$. 

Consider now the normalized random matrix $X_N = X/\sqrt{N}$. Let $\lambda_i$ denote the eigenvalues of $X_N$, with $\lambda_1 \leq \lambda_2 \leq ...\leq \lambda_N$. We define $L_{N}$ the empirical spectral measure of $X_N$ by,
$$L_{N} = \frac{1}{N}\sum_{i=1}^N \delta_{\lambda_i}.$$ 
Wigner's theorem (see \cite{Wigner}, \cite[Theorem 2.1.1, Exercice 2.1.16]{Guionnet}, \cite[Theorem 2.5]{Silverstein}) states that,
$$ L_{N}   \underset{N\to +\infty}{\rightsquigarrow} \sigma_{sc} \text{ a.s }$$
where $ \sigma_{sc}$ denotes the semicircular law, that is,
$$  \sigma_{sc}= \frac{1}{2\pi} \Car_{|x|\leq 2} \sqrt{4-x^2} dx,$$
and for any $p\in \NN$, almost surely, it holds
$$\langle L_{N},x^p\rangle = \frac{1}{N}\tr X_N^p  \underset{N\to+\infty}{\longrightarrow } 
\begin{cases}
C_{p/2}&\text{ if $p$ is even,}\\
0 & \text{ if $p$ is odd,}
\end{cases} 
$$
 and in the case $p$ is even, $C_{p/2}$ denotes the $\left(\frac{p}{2}\right)^{\text{th}}$ Catalan number, which is also the $p^{\text{th}}$ moment of the semicircular law.

In the following we will denote for any $A\in \M_N(\CC)$, the normalized trace $\tr_N A$. In the case of Gaussian Wigner matrices, we have the following result.
\begin{The}\label{LDPtrgauss}Let $p\in \NN$, $p\geq 3$.
Let $X$ be a Wigner matrix with Gaussian entries. We assume that $X_{1,1}$ is a centered real Gaussian variable of variance $\sigma^2$, and $X_{1,2}$ is a centered Gaussian variable, possibly complex, such that $\EE|X_{1,2}|^2=1$. The sequence $\left(\tr_N X_N^p \right)_{N\in \NN}$,
follows a LDP with speed $N^{1+\frac{2}{p}}$, and good rate function $J_p$.
If $p$ is even, $J_p$ is given by,
$$\forall x \in \RR, \ J_p(x) = \begin{cases}
\frac{1}{2}\min\left( \frac{1}{\sigma^2} , \frac{\beta}{2}\right)\left( x-C_{p/2}\right)^{\frac{2}{p}}& \text{ if } x\geq C_{p/2},\\
+\infty & \text{ otherwise,}
\end{cases}
$$
where $C_{p/2}$ denotes the $\left( \frac{p}{2}\right)^{\text{ th}}$ Catalan number, and if $p$ is odd, 
$$\forall x \in \RR, \ J_p(x) = \frac{1}{2}\min\left( \frac{1}{\sigma^2} ,\frac{\beta}{2}\right)
\left| x\right|^{\frac{2}{p}},$$
where $\beta =1$, if $X_{1,2}$ is a real Gaussian variable, and $\beta=2$ if $X_{1,2}$ is a complex Gaussian variable.
\end{The}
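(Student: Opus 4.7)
The rate function $J_p$ is the minimum of two terms, reflecting two competing deviation mechanisms for $\tr_N X_N^p$: an atypical eigenvalue of the bulk (governed by the $\beta$-ensemble cost $\beta/4$) and an atypical diagonal entry of $X$ (governed by the Gaussian tail cost $1/(2\sigma^2)$). The plan is to prove matching upper and lower bounds at speed $N^{1+2/p}$. The main tool is the decomposition $X = Y + \epsilon D$, where $Y$ is an independent standard $\beta$-Wigner matrix (off-diagonal variance $1$, diagonal variance $2/\beta$), $D$ is diagonal with i.i.d.\ real standard Gaussians, and $\epsilon^2 = \sigma^2 - 2/\beta$ (when $\sigma^2 < 2/\beta$ one swaps the roles: $Y = X + \epsilon D$). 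When $\sigma^2 = 2/\beta$ the spectrum of $X_N$ is exactly the $\beta$-ensemble associated with the potential $V(\lambda) = \beta \lambda^2 / 4$, so Theorem \ref{ldp1} with $\alpha = 2$ applies directly and delivers the rate $(\beta/4)(x - C_{p/2})^{2/p}$, which coincides with $J_p(x)$ in that case.

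For the lower bound I would exhibit two concrete deviation events, each realising one of the two competing costs. First, conditioning on $X_{1,1} = v$ with $v = N^{1/2 + 1/p}(x - C_{p/2})^{1/p}$ and using rank-one perturbation theory (BBP-type analysis), $\lambda_1(X_N) = (v/\sqrt{N})(1+o(1))$ a.s.\ while the rest of the spectrum stays at the semicircle, so that $\tr_N X_N^p \to x$ in probability; the conditioning costs $\exp(-v^2/(2\sigma^2)(1+o(1))) = \exp(-N^{1+2/p}(x-C_{p/2})^{2/p}/(2\sigma^2)(1+o(1)))$. Second, applying Theorem \ref{ldp1} to $\tr_N Y_N^p$ yields rate $(\beta/4)(x - C_{p/2})^{2/p}$, and a Hoffman--Wielandt estimate together with Borell's Gaussian concentration on $\|D\|_F$ shows that $|\tr_N X_N^p - \tr_N Y_N^p|$ is negligible at speed $N^{1+2/p}$, so the LDP transfers. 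Choosing the cheaper of the two events gives the lower bound by $J_p(x)$.

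For the upper bound I first establish exponential tightness at speed $N^{1+2/p}$ using Latała's Gaussian chaos tail inequality applied to the degree-$p$ polynomial $\tr X_N^p$ in the Gaussian entries. Then the binomial expansion
\[
\tr_N X_N^p - \tr_N Y_N^p \;=\; \sum_{k=1}^{p} \epsilon^k \sum_{\substack{w \text{ of length } p\\ \#D(w) = k}} \tr_N\bigl(w(Y_N, D_N)\bigr)
\]
is controlled word by word. Using $\|D_N\|_{\mathrm{op}} = \max_i |D_{ii}|/\sqrt{N}$ together with the bulk concentration of $Y_N$, one shows that each cross-term is exponentially negligible at our speed unless one of the $D_{ii}$ deviates on the scale $N^{1/2+1/p}$. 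Combining this with Cramér's theorem for $\tfrac{1}{N}\sum D_{ii}^p$ and the LDP of Theorem \ref{ldp1} for $\tr_N Y_N^p$, the upper bound reduces to the variational problem
\[
\inf_{\substack{y,z \geq 0\\ y + z = x - C_{p/2}}}\left[\tfrac{\beta}{4}\, y^{2/p} + \frac{z^{2/p}}{2\sigma^2}\right].
\]
Since $t \mapsto t^{2/p}$ is concave on $[0,\infty)$ for $p \geq 2$, the infimum is attained at a corner, giving exactly $\min(\beta/4,\, 1/(2\sigma^2))(x - C_{p/2})^{2/p} = J_p(x)$.

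The main obstacle is the last step of the upper bound. The Gärtner--Ellis strategy is unavailable for $p \geq 3$ since the moment generating function $\EE e^{\lambda \tr_N X_N^p}$ is infinite for $\lambda > 0$, so one must instead control the mixed $(Y,D)$-trace cross-terms by hand, carefully identifying which configurations of diagonal entries make each cross-term contribute a given fraction of the total deviation, and finally invoking the concavity of $t \mapsto t^{2/p}$ to exclude genuinely mixed strategies from beating the two pure mechanisms. The odd-$p$ case follows by the same argument together with the symmetry $X \sim -X$.
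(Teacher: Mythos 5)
Your proposal takes a genuinely different route from the paper. The paper follows Borell's proof of the LDP for Wiener chaos (as in Ledoux's lecture notes): it first writes the rate function in variational form, $J_p(s)=\inf\{\tfrac12 q(H):\ \langle\sigma_{sc},x^p\rangle+\tr H^p=s\}$ with $q$ the Cameron--Martin quadratic form on Hermitian matrices (Lemma \ref{computfntauxGauss}), proves a uniform perturbation estimate $\tr_N(X/\sqrt N+N^{1/p}H)^p\to\langle\sigma_{sc},x^p\rangle+\tr H^p$ over $\|H\|_2\le r$ (Lemma \ref{lemconv}), and then gets the upper bound from the Gaussian isoperimetric inequality and the lower bound from a finite-dimensional Cameron--Martin change of measure. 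This never decomposes $X$, never invokes the $\beta$-ensemble LDP, and never needs to track cross-terms. Your route instead tries to split $X=Y+\epsilon D$ with $Y$ a $\beta$-ensemble and $D$ an independent diagonal, and to reduce the LDP to Theorem \ref{ldp1} and a one-dimensional Cram\'er-type argument.

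Unfortunately there are several genuine gaps in your plan. First, the decomposition $X=Y+\epsilon D$ with $Y$ and $D$ independent only exists when $\sigma^2\ge 2/\beta$, and the proposed ``swap'' $Y=X+\epsilon D$ does not fix this: in that regime $X=Y-\epsilon D$ with $Y$ and $D$ correlated, so the conditioning and transfer arguments you describe no longer make sense. Since for $\sigma^2<2/\beta$ the dominant mechanism is precisely the off-diagonal one at cost $\beta/4$, the lower bound is unproved in that whole regime. Moreover, the cost $\beta/4$ is genuinely not attained by any single finite-rank configuration (the rank-two off-diagonal trial gives $\tfrac12 2^{-2/p}>\beta/4$ for $p\ge3$ when $\beta=1$); it is a limit over translations spread across an $n\times n$ block of off-diagonal entries as $n\to\infty$, which is what Lemma \ref{computfntauxGauss} of the paper exhibits and what a direct Cameron--Martin lower bound captures -- the $\beta$-ensemble framing hides this. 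Second, the Hoffman--Wielandt transfer from $\tr_N Y_N^p$ to $\tr_N X_N^p$ fails on the deviation events you care about: on $\{\tr_N Y_N^p\approx x\}$ one has $\|Y_N\|\sim N^{1/p}$, so the Lipschitz constant in the HW estimate blows up and the bound is not uniformly negligible at speed $N^{1+2/p}$. Third, Cram\'er's theorem does not apply to $\tfrac1N\sum D_{ii}^p$ for $p\ge3$ since Gaussian $p$-th powers have no finite exponential moments; you would need a Nagaev-type heavy-tail argument. Finally and most importantly, the upper bound reduction to the variational problem $\inf_{y+z=x-C_{p/2}}[\tfrac{\beta}{4}y^{2/p}+\tfrac{z^{2/p}}{2\sigma^2}]$ is not justified: $\tr_N X_N^p$ is not a sum of an independent $Y$-contribution and a $D$-contribution (the cross-terms $\tr_N(Y_N^{k_1}D_N^{m_1}\cdots)$ couple them precisely when a $D_{ii}$ is large), and the displayed variational problem is moreover inconsistent with the decomposition: $D$ has variance $\epsilon^2=\sigma^2-2/\beta$, not $\sigma^2$, and $Y$ itself carries a diagonal of variance $2/\beta$, so the diagonal deviation mechanism is shared between $Y$ and $D$ and the bookkeeping does not separate into $\tfrac{\beta}{4}$ and $\tfrac{1}{2\sigma^2}$ as written. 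The concavity observation at the end is the right reason the answer is a minimum of two pure costs, but the reduction preceding it needs to be established; the paper sidesteps all of this by optimizing over the full Cameron--Martin space at once.
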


We consider now the so-called model of Wigner matrices without Gaussian tails investigated in \cite{Bordenave}. We recall in the following definition this model.
\begin{Def}\label{WG}
We say that $X$ is a Wigner matrix without Gaussian tail, if $X$ is a Wigner matrix such that there exist $\alpha \in (0,2)$ and $a, b \in (0, +\infty)$ such that,
\begin{equation} \label{queue de distrib}\lim_{t\to +\infty} -t^{-\alpha}\log \PP\left(|X_{1,1}|>t\right) = b,\end{equation}
$$ \lim_{t\to +\infty} -t^{-\alpha} \log \PP\left(|X_{1,2}|>t\right) = a.$$
Moreover, we assume that there are two probability measures on $\mathbb{S}^1$, $\upsilon_1$ and $\upsilon_2$,  and $t_0>0$, such that for all $t\geq t_0$ and any measurable subset $U$ of $\mathbb{S}^1$,
\begin{equation*} \label{decouplage module argument}\PP\left( X_{1,1}/|X_{1,1}| \in U, |X_{1,1}|\geq t\right) = \upsilon_1(U) \PP\left(|X_{1,1}|\geq t\right),\end{equation*}
$$\PP\left( X_{1,2}/|X_{1,2}| \in U, |X_{1,2}|\geq t\right) = \upsilon_2(U) \PP\left(|X_{1,2}|\geq t\right).$$
We denote the normalized matrix $X_N = X/\sqrt{N}$.
\end{Def}

With this definition, we can now state the following result.
\begin{The}\label{LDPtraceWG} Let $p\in \NN$, $p\geq3$. 
Let $X$ be a Wigner matrix without Gaussian tail. The sequence $(\tr_N X_N^p)_{N\geq 1}$ satisfies a large deviations principle with speed $N^{\alpha\left( \frac{1}{2} + \frac{1}{p}\right)}$ and good rate function $J_p$. If $p$ is even, $J_p$ is given by
$$\forall x\in \RR, \  J_p(x) = \begin{cases}
c_p\left(x-C_{p/2}\right)^{2/p}& \text{if } x\geq C_{p/2},\\
+\infty & \text{otherwise},
\end{cases}$$
where $C_{p/2}$ denotes the $\left( \frac{p}{2}\right)^{\text{th}}$ Catalan number, and if $p$ is odd, the rate function $J_p$ is given by
$$ \forall x \in \RR, \ J_p(x) =
c_p|x|^{2/p},$$
where  $c_p$ is a constant depending on $p$, $\alpha$, $a$ and $b$. 

Furthermore, if $\alpha \in (0,1]$ and $p$ is even, then $c_p = \min \left( b ,2^{-\alpha/p}a\right)$.
\end{The}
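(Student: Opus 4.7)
The approach follows the heuristic that at the speed $N^{\alpha(1/2+1/p)}$, the deviations of $\tr_N X_N^p$ from $C_{p/2}$ are caused by a single anomalously large matrix entry of magnitude $s\sim N^{1/2+1/p}$. Indeed, for $p$ even, a single off-diagonal entry $X_{i,j}$ of modulus $s$ contributes, through the two back-and-forth closed walks of length $p$ confined to the edge $\{i,j\}$, the amount $2s^p/N^{p/2+1}$ to $\tr_N X_N^p$; matching this to a target deviation $y=x-C_{p/2}\geq 0$ forces $s=(y/2)^{1/p}N^{1/2+1/p}$, whose probability by \eqref{queue de distrib} decays like $\exp(-a\,s^\alpha(1+o(1)))=\exp\bigl(-a\cdot 2^{-\alpha/p}\,y^{\alpha/p}\,N^{\alpha(1/2+1/p)}(1+o(1))\bigr)$. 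A single large diagonal entry yields instead the competing cost $b\,y^{\alpha/p}\,N^{\alpha(1/2+1/p)}$, and the minimum of these two produces precisely the formula $c_p=\min(b,2^{-\alpha/p}a)$ claimed for $\alpha\in(0,1]$.

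The rigorous proof proceeds in three stages. First, I would truncate the matrix: writing $X=\tilde X+\hat X$ where $\tilde X$ retains the entries of magnitude below $\eta N^{1/2+1/p}$ (with $\eta>0$ sent to $0$ at the end), the concentration inequalities of \cite{Meckes} applied to the bounded matrix $\tilde X_N$ show that $\tr_N\tilde X_N^p$ concentrates around its expectation at speed $N^{1+2/p}$, which strictly dominates $N^{\alpha(1/2+1/p)}$ since $\alpha<2$; hence the bulk contribution is effectively pinned to $C_{p/2}$ at our exponential scale. Second, expanding $\tr(X^p)$ over closed walks and grouping them by the subset of ``large'' entries they visit, I would isolate those walks that traverse a single large edge (or a single large self-loop) exactly $p$ times and show that all other walk types contribute negligibly. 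The key concavity input is that, since $\alpha/p<1$, the function $y\mapsto y^{\alpha/p}$ is strictly subadditive on $[0,\infty)$, so distributing a total deviation $y$ across $k\geq 2$ jumps strictly increases the total cost: the single-jump mechanism is always optimal.

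Third, I would assemble the LDP. For the upper bound, a union bound over the $O(N^2)$ possible positions of the distinguished large entry contributes only an additive $O(\log N)$ term to the log-probability, negligible against $N^{\alpha(1/2+1/p)}$; combined with the tail estimate \eqref{queue de distrib} and the subadditivity argument to absorb configurations with several large entries, this yields the desired exponential upper bound. For the lower bound I would plant a single entry of prescribed magnitude, using \eqref{decouplage module argument} to select its phase so that its contribution $|X_{i,j}|^p$ (even $p$, off-diagonal) or $X_{i,i}^p$ (odd $p$, diagonal) realizes the target deviation. The main obstacle I expect lies in the second stage when $\alpha\in(1,2)$: walks using two or more moderately large entries, or mixed walks combining one large entry with $O(1)$-sized bulk entries, could in principle contribute at a comparable exponential scale, and ruling them out requires a delicate combinatorial analysis of the trace expansion. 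This is presumably why the explicit formula for $c_p$ is stated only under the restriction $\alpha\in(0,1]$.
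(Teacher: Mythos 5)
Your heuristic is the right one and matches the paper's own heuristic section, but two steps of your plan have genuine gaps. First, the truncation/concentration stage does not work as stated: if you keep all entries of magnitude up to $\eta N^{1/2+1/p}$ in the ``bulk'' matrix $\tilde X$, the Meckes--Szarek concentration inequality has speed $N^{1+2/p}/\kappa^2$, where $\kappa$ is the uniform bound on the entries, so with $\kappa=\eta N^{1/2+1/p}$ you get speed $O(1)$, not $N^{1+2/p}$. This is precisely why the paper uses a four-way decomposition $X_N=A+B^{\veps}+C^{\veps}+D^{\veps}$, cutting the bulk at $(\log N)^d$ (so that the concentration speed $N^{1+2/p}/(\log N)^{2d}$ still beats $N^{\alpha(1/2+1/p)}$) and disposing of the intermediate range $(\log N)^d<|X_{i,j}|<\veps N^{1/2+1/p}$ by a separate Chernoff bound on the Hilbert--Schmidt norm of $B^{\veps}$ (Lemma \ref{tensionexpoB}).

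Second, and more seriously, your ``key concavity input'' does not prove what you need. Subadditivity of $y\mapsto y^{\alpha/p}$ only rules out splitting the deviation into several \emph{non-interacting} single-entry contributions; it says nothing about several large entries sitting in the same small submatrix, which interact through the cross terms of $\tr A^p$. The correct rate constant is the value of the matrix optimization problem $c_p=\inf\{b\sum_i|A_{i,i}|^\alpha+a\sum_{i<j}|A_{i,j}|^\alpha : \tr A^p=1\}$ over all finite Hermitian matrices (see \eqref{constoptim}); single-entry configurations only give the upper bound $c_p\le\min(b,2^{-\alpha/p}a)$. The matching lower bound for $\alpha\in(0,1]$ and $p$ even is where the real work lies: the paper uses the inequality $\sum_{i,j}|A_{i,j}|^\alpha\ge\sum_i|\lambda_i(A)|^\alpha$, valid for $\alpha\in(0,2)$, together with a multipliers-rule analysis of the resulting eigenvalue problem. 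For $\alpha\in(1,2)$ even the paper does not identify $c_p$ with the single-entry value; it only proves $\min(b,a/2)\le c_p\le\min(b,2^{-\alpha/p}a)$. So the restriction to $\alpha\le1$ is not, as you suggest, about controlling mixed walks in the trace expansion --- the exponential equivalences reducing everything to $\langle\sigma_{sc},x^p\rangle+\tr_N(C^{\veps})^p$ hold for all $\alpha\in(0,2)$ --- but about solving the variational problem. Finally, to obtain a genuine LDP rather than logarithmic asymptotics of ball probabilities, the paper runs a contraction principle for the law of $C^{\veps}/N^{1/p}$ on a quotient space of matrices with at most $r$ nonzero entries and then passes to the limits $r\to\infty$ and $\veps\to0$ via exponentially good approximations; your assembly step would need an argument of this kind, and a proof of exponential tightness, neither of which follows from a union bound over entry positions.
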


\begin{Rem}
Note that for $p=2$, the trace of $X^2$ is a sum of i.i.d random variables, so that one can apply Cramer's theorem (see \cite[Theorem 2.2.3]{Zeitouni}) in the case where the entries have finite Laplace transform, or Nagaev's truncation approach (see \cite{Nagaev} or \cite{Gantert}) in the case where the entries have a tail distribution behaving as $e^{-ct^{\alpha}}$, with some $c>0$, and $\alpha \in (0,2)$.
\end{Rem}
\begin{Rem}
The constant $c_p$ appearing in Theorem \ref{LDPtraceWG} is the solution of an optimization problem described in \eqref{constoptim}. We solve this optimization problem in section \ref{CompJ1}, in the easiest case when $\alpha \in (0,1]$ and $p$ is even, and we give a lower a bound and upper bound in the case $p$ is even and $\alpha \in (0,2)$.
\end{Rem}

\section*{Acknowledgements}
I would like to thank my supervisor Charles Bordenave for his inspiring advice, and all the fruitful conversations which helped me to build the present paper. Furthermore, I would like to thank Mireille Capitaine and Catherine Donati-Martin for introducing us to the problem of large deviations of traces of random matrices and providing us with key references and remarks, and also Michel Ledoux for pointing out precious references.

\section{The Gaussian case}\label{Gaussian}
We study in this section the question of the large deviations of the moments of the empirical spectral measure of a Wigner matrix with Gaussian entries. We will use an approach which is greatly inspired from Borell's proof of the LDP for Weiner chaos (see \cite{Borell} \cite{Borell2}), and especially Ledoux's exposition in \cite[Section 5, Theorem 5.1]{Ledouxflour}.

As we will see in the proof, the deviations of the trace are created by translations of $X$ of the form $N^{1/2+1/p}H$, where $H$ is with bounded Hilbert-Schmidt norm. One of the central argument relies on the following lemma.

\begin{Lem}\label{lemconv}Let $\beta \in \{1,2\}$. We denote by $\mathcal{H}_N^{(\beta)}$ the set of symmetric matrices of size $N$ when $\beta=1$, and Hermitian matrices when $\beta=2$.
Let $|| \ ||_2$ denote the Hilbert-Schmidt norm on $\mathcal{H}_N^{(\beta)}$. Let $X$ be a Wigner matrix whose entries are centered and have finite $(p+1)^{\text{th}}$ moment. For any $r>0$,

\begin{equation}\label{convprob}
\sup_{\underset{H\in \mathcal{H}_N^{(\beta)}}{||H||_2\leq r }} \left|  \tr_N\left( \frac{X}{\sqrt{N}} + N^{1/p} H \right)^p - \langle \sigma_{sc}, x^p\rangle -\tr H^p \right| \underset{N\to +\infty}{\longrightarrow} 0,\end{equation}
in probability, where $\langle \sigma_{sc}, x^p\rangle$ denotes the $p^{\text{th}}$ moment of the semicircular law.
\end{Lem}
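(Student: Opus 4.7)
The plan is to set $Y = X/\sqrt N$ and $Z = N^{1/p} H$ and expand $(Y + Z)^p$ as a sum of $2^p$ non-commutative words of length $p$ in the two letters. The two extremal words already produce the right-hand side of \eqref{convprob}: indeed $\tr_N(Z^p) = \tr_N(N H^p) = \tr H^p$ exactly, and $\tr_N(Y^p) \to \langle \sigma_{sc}, x^p\rangle$ in probability by Wigner's moment-convergence theorem, which applies under the $(p+1)$-th moment hypothesis. The content of the lemma therefore reduces to showing that the remaining ``mixed'' words, those containing at least one $Y$ and at least one $Z$, contribute a quantity that tends to zero in probability uniformly in $\|H\|_2 \leq r$.

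For a mixed word $w$ with $k \geq 1$ copies of $Y$ and $l \geq 1$ copies of $Z$ (so $k + l = p$), the plan is to apply the non-commutative Hölder inequality $|\tr(A_1 \cdots A_p)| \leq \prod_i \|A_i\|_{q_i}$, valid whenever $\sum_i 1/q_i = 1$, with $q_i = \infty$ at the $Y$-slots (contributing $\|Y\|_{op}$) and $q_i = \max(l,1)$ at the $Z$-slots. The crucial point is that we only have access to the Hilbert--Schmidt norm $\|H\|_2$ and not to $\|H\|_{op}$; taking $q_i = l$ precisely at the $Z$-slots makes $\|Z\|_l$ appear, which is dominated by $\|Z\|_2 = N^{1/p}\|H\|_2$ thanks to the monotonicity of Schatten norms in the exponent. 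When $l \geq 2$ this yields $|\tr_N(w)| \leq \|Y\|_{op}^k r^l N^{l/p - 1}$, and when $l = 1$ the estimate $\|Z\|_1 \leq \sqrt{N}\, \|Z\|_2$ (Cauchy--Schwarz, using $\mathrm{rank}(Z)\leq N$) gives $|\tr_N(w)| \leq \|Y\|_{op}^{p-1} r \, N^{1/p - 1/2}$.

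Since the lemma is used in the regime $p \geq 3$ and $1 \leq l \leq p-1$, both exponents $l/p - 1$ and $1/p - 1/2$ are strictly negative, so each mixed term is $O_\PP(N^{-c})$ for some $c = c(p) > 0$, with an implicit constant depending only on $r$, on the number $2^p$ of words, and on powers of $\|Y\|_{op}$. The latter is tight, in fact convergent to $2$ almost surely by the Bai--Yin theorem, since the $(p+1)$-th moment assumption with $p \geq 3$ provides the fourth moment of the entries. Summing over the words and then taking the supremum over $\|H\|_2 \leq r$ yields the claim. I expect the main conceptual step to be precisely the Hölder choice described in the second paragraph: a naive estimate using $\|Z\|_{op} \leq N^{1/p}\|H\|_2$ at each $Z$-slot would produce a factor $N^{l/p}$ that can no longer be absorbed by the $1/N$ from $\tr_N$, so using Schatten monotonicity to trade $\|Z\|_l^l$ for $\|Z\|_2^l$ is essential.
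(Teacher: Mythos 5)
Your proof is correct and follows the same overall route as the paper's: expand $(Y+Z)^p$ into $2^p$ words, note that the two pure words give the limit, and kill each mixed word with non-commutative H\"older, trading the Schatten norm of $Z$ that appears for the Hilbert--Schmidt norm via monotonicity in the exponent. The one genuine difference is the choice of H\"older exponents on the $Y$-slots. You take $q_i = \infty$ there, so the bound involves $\|Y\|_{op}^k$ and you need Bai--Yin to keep this tight; this in turn forces a separate treatment of $l=1$ via $\|Z\|_1 \le \sqrt{N}\|Z\|_2$, since $\|Z\|_1$ cannot be bounded by $\|Z\|_2$ alone. The paper instead assigns exponent $(p+1)/n_i$ to the $i$-th $Y$-block and a single exponent $\alpha$ (determined by $\sum 1/q_i=1$) to every $H$-block; the bound then involves $\tr|Y|^{p+1}$, which is controlled directly by Wigner's moment convergence, and one checks $\alpha m_i \ge 2$ in all cases (using $p\ge 3$ only for $s=1,\,m_1=1$), so Schatten monotonicity applies uniformly and no extra case is needed. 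The paper's choice thus uses exactly the stated $(p+1)$-th moment hypothesis and avoids invoking Bai--Yin, whereas yours requires the fourth moment for the operator-norm control --- which is implied for $p\ge3$, so the argument is complete, but it is a stronger external input than the paper needs.
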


\begin{proof}
By Wigner's theorem (see \cite[Lemmas 2.1.6, 2.1.7]{Guionnet}) and Jensen's inequality, we only have to prove that for any $Y,H \in \mathcal{H}^{(\beta)}_N$,
$$  | \tr(Y+H)^p - \tr Y^p-\tr H^p | \leq 2^p \max_{1\leq k\leq p-1}\Big\{ \Big( \tr|Y|^{p+1}\Big)^{\frac{k}{p+1}} (\tr H^2)^{\frac{p-k}{2}}\Big\}.$$ 
Let $Y,H \in \mathcal{H}^{(\beta)}_N$. Expanding the trace, and using the cyclicity of the trace, it suffices to prove that for any $s \in \{1,...,p\}$, $n_1,...,n_s\in \NN$, $m_1,...,m_s\in \NN$, such that $\sum_{i=1}^s n_i + \sum_{j=1}^s m_j = p$, we have
$$\Big|\tr\left( Y^{n_1} H^{m_1}...Y^{n_s} H^{m_s} \right) \Big|\leq \Big(\tr|Y|^{p+1}\Big)^{\frac{k}{p+1}} \Big( \tr |H|^2\Big)^{\frac{p-k}{2}},$$
with $k =\sum_{i=1}^s n_i$.
Applying Hölder's inequality (see \cite[Corollary IV.2.6]{Bhatia}) with the exponents $ \frac{p+1}{n_1}, \alpha, \frac{p+1}{n_2},..., \alpha, \frac{p+1}{n_s}$, with $\alpha$ such that \begin{equation} \label{expo} \frac{s}{\alpha}=  1-  \sum_{i=1}^s \frac{n_i}{p+1},\end{equation} we get,
$$\big|\tr\left( Y^{n_1} H^{m_1}...Y^{n_s} H^{m_s} \right)\big| \leq  \Big( \tr |Y|^{p+1}\Big)^{\frac{1}{p+1} \sum_{i=1}^s n_i} \prod_{j=1}^s \left( \tr \left|H\right|^{\alpha m_i} \right)^{\frac{1}{\alpha}}.$$
Note that  when $s\geq 2$, we have from \eqref{expo}, $\alpha \geq2$. If $s =1$ and $m_1=1$, then as $p\geq 3$, \eqref{expo} yields $\alpha m_1 \geq2$. In any cases, $\alpha m_i \geq 2$ for any $i\in \{1,...,s\}$. Therefore, for  all $i\in \{1,...,s\}$,
$$ \tr \left|H\right|^{\alpha m_i}  \leq \Big(\tr H^2\Big)^{\frac{\alpha m_i}{2}}.$$
Thus,
$$\big|\tr\left( Y^{n_1} H^{m_1}...Y^{n_s} H^{m_s} \right)\big| \leq  \Big( \tr |Y|^{p+1}\Big)^{\frac{1}{p+1} \sum_{i=1}^s n_i} \left( \tr H^2 \right)^{\frac{1}{2}\sum_{i=1}^s m_i},$$
which gives the claim.
\end{proof}

Before giving the proof of Theorem \ref{LDPtrgauss}, we will give back to the rate function defined in the statement of Theorem \ref{LDPtrgauss} its variational form, which is the following.

\begin{Lem}\label{computfntauxGauss}
Define  
$$\forall H\in \mathcal{H}_n^{(\beta)}, \ q(H) =\frac{1}{\sigma^2} \sum_{i=1}^{+\infty} H_{i,i}^2+ \beta \sum_{i<j} \left| H_{i,j} \right|^2, \ \phi(H) = \langle \sigma_{sc},x^p\rangle + \tr H^p.$$
Then for all $s\in \RR$,
$$J_p(s) = \inf \left\{ \frac{1}{2}q(H) : s =\phi(H), H\in \cup_{n\geq1} \mathcal{H}_n^{(\beta)} \right\},$$
where $J_p$ is the rate function defined in the statement of Theorem \ref{LDPtrgauss}.

\end{Lem}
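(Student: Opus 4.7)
The plan is to compute the variational infimum explicitly, by first rewriting $q$ in a spectrally convenient form and then matching the resulting bound to $J_p$. The initial step is the identity $\tr H^2 = \sum_i H_{i,i}^2 + 2\sum_{i<j}|H_{i,j}|^2$, which recasts
\begin{equation*}
q(H) = \frac{\beta}{2}\tr H^2 + \Big(\frac{1}{\sigma^2} - \frac{\beta}{2}\Big)\sum_i H_{i,i}^2.
\end{equation*}
Splitting on the sign of $1/\sigma^2 - \beta/2$ gives the uniform inequality $q(H) \geq c\,\tr H^2$ with $c := \min(1/\sigma^2, \beta/2)$: if the bracket is nonnegative one drops the extra term, otherwise one absorbs it using $\sum_i H_{i,i}^2 \leq \tr H^2$.

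For the lower bound $\inf\{\tfrac{1}{2}q(H):\phi(H)=s\}\geq J_p(s)$, I would combine the previous display with the monotonicity of the $\ell^p$ norms on the spectrum of $H$: writing the eigenvalues as $\mu_1,\ldots,\mu_n$, one has $\tr H^2 = \|\mu\|_2^2 \geq \|\mu\|_p^2 \geq |\tr H^p|^{2/p}$ for $p\geq 2$, the last inequality being $|\sum \mu_i^p| \leq \sum|\mu_i|^p$ (an equality when $p$ is even). Imposing $\phi(H)=s$ then yields $\tfrac{1}{2}q(H)\geq \tfrac{c}{2}|s-\langle\sigma_{sc},x^p\rangle|^{2/p}$. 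When $p$ is even, $\tr H^p\geq 0$ forces $\phi(H)\geq \langle\sigma_{sc},x^p\rangle$, so the constraint is unsolvable for $s<\langle\sigma_{sc},x^p\rangle$, in agreement with the $+\infty$ branch of $J_p$.

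For the matching upper bound I would exhibit explicit (near-)minimizers, depending on the regime. Set $t = s-\langle\sigma_{sc},x^p\rangle$, so that $t^{1/p}$ is a well-defined real number (with $t\geq 0$ whenever $p$ is even). If $1/\sigma^2 \leq \beta/2$, the $1\times 1$ matrix $H=(t^{1/p})$ already achieves equality, since $\phi(H)=s$ and $q(H)=|t|^{2/p}/\sigma^2 = 2J_p(s)$. The delicate case is $1/\sigma^2>\beta/2$: no rank-one matrix of nonzero trace can attain the lower bound $(\beta/2)\tr H^2$, which demands a vanishing diagonal. I would instead take $H_n \in \mathcal{H}_n^{(\beta)}$ with spectrum $(t^{1/p},0,\ldots,0)$ and, by the Schur--Horn theorem, conjugated so that every diagonal entry equals $t^{1/p}/n$ (a constant diagonal with the correct trace is always majorized by the spectrum, since the top-$k$ average of the spectrum exceeds the global average). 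Then $\tr H_n^p = t$, $\tr H_n^2 = |t|^{2/p}$, and $\sum_i (H_n)_{i,i}^2 = |t|^{2/p}/n$, so $\phi(H_n)=s$ and $q(H_n) \to (\beta/2)|t|^{2/p} = 2J_p(s)$ as $n\to\infty$. Passing to the infimum over $n$ concludes the upper bound; this is the only nontrivial step and is precisely where the freedom to enlarge the size of $H$ in the union over $n\geq 1$ becomes essential.
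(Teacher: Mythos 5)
Your proof is correct, and the lower bound is identical to the paper's: rewrite $q$ via $\tr H^2 = \sum_i H_{i,i}^2 + 2\sum_{i<j}|H_{i,j}|^2$, deduce $q(H) \geq \min(1/\sigma^2,\beta/2)\tr H^2$, and use $\tr H^2 \geq |\tr H^p|^{2/p}$ for $p \geq 2$.

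For the upper bound, you and the paper both isolate a matrix achieving (in the limit) each branch of the $\min$, but the constructions for the $\beta/2$ branch differ. The paper takes the matrix $\lambda(J_n - I_n)$ with zero diagonal and constant off-diagonal $\lambda$, whose spectrum is $\{(n-1)\lambda, -\lambda, \ldots, -\lambda\}$, tuning $\lambda = \left(\tfrac{t}{(n-1)^p \pm (n-1)}\right)^{1/p}$ so that $\tr H^p = t$; one then computes $q(H) = \beta\tfrac{n(n-1)}{2}\lambda^2 \to \tfrac{\beta}{2}|t|^{2/p}$. You instead spread a rank-one matrix of spectrum $(t^{1/p},0,\ldots,0)$ over a constant diagonal $t^{1/p}/n$, so $\tr H_n^p = t$ is exact and the diagonal contribution to $q$ vanishes at rate $1/n$. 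Your invocation of Schur--Horn is correct (the constant vector is majorized by the spectrum) but heavier than necessary: the matrix you describe is simply $H_n = \tfrac{t^{1/p}}{n}J_n$, the constant-entry matrix, which one can write down directly without any representation theorem, making your construction just as elementary as the paper's. Both routes give the same asymptotic value and are equally valid; the paper's matrix achieves the zero-diagonal extremum exactly at finite $n$ while yours approaches it, and conversely your spectrum is simpler (rank one) while the paper's has two distinct nonzero eigenvalues.
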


\begin{proof}
For any $H \in  \mathcal{H}_n^{(\beta)}$, we have
$$ q(H) \geq \min\left( \frac{1}{\sigma^2}, \frac{\beta}{2} \right) \tr H^2.$$
As $p\geq 2$, we get
\begin{equation} \label{borne} q(H) \geq \min\left( \frac{1}{\sigma^2}, \frac{\beta}{2} \right) \left|\tr H^p \right|^{2/p}. \end{equation}
This yields for any $s\in \RR$,
$$ \inf \left\{\frac{1}{2} q(H) : s =\phi(H), H\in \cup_{n\geq1} \mathcal{H}_n^{(\beta)} \right\} \geq J_p(s).$$
Note, that for any $s\in \RR$, 
$$q(s-\langle\sigma_{sc}, x^p\rangle)=\frac{1}{\sigma^2}|s-\langle \sigma_{sc},x^p\rangle|^{2/p}.$$ 
Assume $p$ is even. Let $s\in [\langle\sigma_{sc},x^p\rangle, +\infty)$, and $n\in \NN$. Define
\begin{equation}\label{defH}
H
= \left(
     \raisebox{0.5\depth}{%
       \xymatrixcolsep{1ex}%
       \xymatrixrowsep{1ex}%
       \xymatrix{
        0 \ar @{.}[ddddrrrr]& \lambda \ar @{.}[rrr] \ar @{.}[dddrrr] &  & &\lambda  \ar @{.}[ddd]  \\
         \lambda \ar@{.}[ddd] \ar@{.}[dddrrr]& & & & \\
         &&&& \\
         &&&&\lambda \\
         \lambda \ar@{.}[rrr] & & &\lambda& 0
       }%
     }
   \right)\in  \mathcal{H}_n^{(\beta)}, \end{equation}
with $\lambda = \left( \frac{s- \langle\sigma_{sc},x^p\rangle}{(n-1)^p+(n-1)} \right)^{1/p}$. We have $ s = \langle \sigma_{sc}, x^p\rangle +\tr H^p$, and
 $$ q(H) = \beta \frac{n(n-1)}{2} \left( \frac{s- \langle\sigma_{sc},x^p\rangle}{(n-1)^p+(n-1)} \right)^{2/p}\underset{n\to +\infty}{\longrightarrow} \frac{\beta}{2}\left( s-\langle\sigma_{sc},x^p\rangle \right)^{2/p}.$$
This yields for any $s\in [\langle\sigma_{sc},x^p\rangle, +\infty)$,
$$ \inf \left\{ q(H) : s =\phi(H), H\in \cup_{n\geq1} \mathcal{H}_n^{(\beta)} \right\}=J_p(s).$$
For $s  < \langle \sigma_{sc}, x^p \rangle $, the above inequality is true, since both of the quantities are equal to $+\infty$. Assume now $p$ is odd. Let $s\in\RR$, and define $H\in \mathcal{H}^{(\beta)}_n$ as in \eqref{defH} but with $\lambda =\sg(s) \left( \frac{|s|}{(n-1)^p-(n-1)} \right)^{1/p}$, so that $ s = \tr H^p$. We have
$$ q(H) = \beta \frac{n(n-1)}{2} \left( \frac{|s|}{(n-1)^p-(n-1)} \right)^{2/p}\underset{n\to +\infty}{\longrightarrow}\frac{\beta}{2}|s|^{2/p}.$$
As in the case where $p$ is even, this yields for any $s\in\RR$,
$$ \inf \left\{ \frac{1}{2}q(H) : s =\phi(H), H\in \cup_{n\geq1} \mathcal{H}_n^{(\beta)} \right\} = J_p(s),$$
which ends the proof.
\end{proof}

We are now ready to give a proof of Theorem \ref{LDPtrgauss}. As in Borell's proof of the LDP of Weiner chaoses (see for example \cite[Theorem 5.1]{Ledouxflour}), the proof of the upper bound relies on a reformulation of the deviations of the trace in terms of an enlargement of a properly chosen event. Then, the Gaussian isoperimetric inequality allows us to estimate the probability of such enlargement. Similarly as in Borell's proof of the lower bound, we use here a kind of finite-dimensional version of Cameron-Martin formula.
\begin{proof}[Proof of Theorem \ref{LDPtrgauss}]We closely follow the outline of proof of the large deviations of Weiner chaoses in \cite[Section 5, Theorem 5.1]{Ledouxflour}.

\textbf{Upper bound}
Let $A$ be a closed subset of $\RR$.  We can assume without loss of generality that $\inf_A J_p>0$, otherwise there is nothing to prove.

 Let $0< r < \inf_A J_p$.
Using the notation of Lemma \ref{computfntauxGauss}, we define for any $N\in \NN$, 
$$\mathcal{K}_N = \left\{ H \in  \mathcal{H}_N^{(\beta)} : q(H) \leq 1 \right\}, \text{ and }   \mathcal{K} = \left\{ H \in  \cup_{n\geq 1}\mathcal{H}_n^{(\beta)} : q(H) \leq 1 \right\}.$$
We claim that,
$$ \overline{\phi\left(\sqrt{2r}\mathcal{K}\right)}\cap A = \emptyset.$$
Indeed,
if $s\in \overline{\phi\left(\sqrt{2r}\mathcal{K}\right)}$, we can find a sequence $(H_k)_{k\in \NN}$ in $\mathcal{K}$, such that 
$$ s = \lim_{k\to +\infty} \phi\left(\sqrt{2r} H_k \right).$$
As $J_p$ is lower semi continuous, we have
$$ J_p(s) \leq \liminf_{k\to +\infty} J_p\left(  \phi\left(\sqrt{2r} H_k \right) \right).$$
As $H_k \in \mathcal{K}$, we have $q(\sqrt{2r}H_k) \leq 2r$. From Lemma \ref{computfntauxGauss}, we get
$$J_p(s) \leq r.$$
This yields $s\notin A$.

From \eqref{borne}, we deduce that $ \overline{\phi\left(\sqrt{2r}\mathcal{K}\right)}$ is bounded. Thus it is a compact subset, which yields that there is some $\eta>0$ such that 
  $$ \left(\phi\left(\sqrt{2r}\mathcal{K}\right)+ B(0,\eta) \right)\cap A = \emptyset.$$
As $\mathcal{K}_N \subset \mathcal{K}$, we have for any $N \in \NN$,
  \begin{equation*}  \left(\phi\left(\sqrt{2r}\mathcal{K}_N\right)+ B(0,\eta) \right)\cap A = \emptyset.\end{equation*}
Observe here that $\eta$ does not depend on $N$. 
We deduce that
$$\PP\Big( \tr_N\Big( \frac{X}{\sqrt{N}}\Big)^p \in A \Big) \leq \PP\Big( \tr_N\left( \frac{X}{\sqrt{N}}\right)^p \notin \phi\Big(\sqrt{2r}\mathcal{K}_N\Big) + B(0,\eta) \Big).$$
Let 
$$V = \Big\{ Y\in \mathcal{H}_N^{(\beta)}: \sup_{H \in \mathcal{K}_N} \Big| \tr_N\left( \frac{Y}{\sqrt{N}} + N^{1/p}H\right)^p -  \langle\sigma_{sc},x^p\rangle  - \tr H^p \Big| < \eta \Big\}.$$
Then,
$$\PP\left( \tr_N\left( \frac{X}{\sqrt{N}}\right)^p \notin \phi\left(\sqrt{2r}\mathcal{K}_N \right) + B(0,\eta) \right) \leq \PP\left( X \notin V +\sqrt{2r} N^{1/2+1/p} \mathcal{K}_N\right).$$

By Lemma \ref{lemconv}, we know that for $N$ large enough, $\PP\left( X \in V \right) \geq 1/2$.  The Gaussian isoperimetric inequality (see \cite[Theorem 4.3]{Ledouxflour}) yields
$$\PP\left( X \notin V +\sqrt{2r} N^{1/2+1/p} \mathcal{K}_N\right) \leq\frac{1}{2} e^{-r N^{1+2/p}}.$$ 
Therefore,
$$\PP\left( \tr_N\left( \frac{X}{\sqrt{N}}\right)^p \in A \right) \leq  \frac{1}{2}e^{-r N^{1+2/p}}.$$ 
Thus, 
$$ \limsup_{N\to +\infty} \frac{1}{N^{1+2/p}} \log \PP\left(  \tr_N\left( \frac{X}{\sqrt{N}}\right)^p \in A \right) \leq -r.$$
Since the previous inequality is valid for any $0<r<\inf_A J_p$, this yields the upper bound of the LDP.

\textbf{Lower bound} Let $A$ be an open subset of $\RR$. Let $s \in A$. There is some $\eta>0$ such that $B(s,\eta) \subset A$. We can assume without loss of generality that $J_p(s) <+\infty$. Define for any $N\in \NN$,
$$\forall t \in \RR, \ J_{p,N}(t) = \inf_{H \in \mathcal{H}_N^{(\beta)} }  \left\{ \frac{1}{2}q(H) : t =\langle \sigma_{sc},x^p\rangle + \tr H^p  \right\}.$$
Let $\delta>0$. Due to Lemma \ref{computfntauxGauss}, for $N$ large enough, we have
$$  J_{p,N}(s) \leq  J_{p}(s) +\delta.$$ 
Let $r>0$ such that $\frac{r^2}{2}-\delta >J_p(s)$. We define the event
$$ V_r = \Big\{Y\in \mathcal{H}_N^{(\beta)} : \sup_{K\in r\mathcal{K}_N} \Big| \tr_N\Big( \frac{Y}{\sqrt{N}} +N^{1/p}K \Big)^p - \langle \sigma_{sc},x^p\rangle  - \tr K^p\Big| <\eta \Big\}.$$
Note that 
$$\mathcal{K}_N \subset \left( \min\Big( \frac{1}{\sigma^2}, \frac{\beta}{2} \Big) \right)^{-1/2} B_2,$$  
where $B_2$ denotes the unit ball of $\mathcal{H}_N^{(\beta)}$ for the Hilbert-Schmidt norm. Therefore Lemma \ref{lemconv} yields that for $N$ large enough, $\PP\left( X \in V_r \right) \geq 1/2$. 

As for $N$ large enough $J_{p,N} (s) \leq \frac{r^2}{2}$, we can write,
$$ J_{p,N}(s) = \inf_{H\in r\mathcal{K}_N}\Big\{ \frac{1}{2} q(H) : s = \langle \sigma_{sc},x^p\rangle + \tr H^p \Big\}.$$
 Let $H \in r\mathcal{K}_N$ be such that $s =  \langle \sigma_{sc}, x^p\rangle + \tr H^p$. 
Then,
\begin{equation} \label{lowerbound} \PP\left(    \tr_N\left( \frac{X}{\sqrt{N}}\right)^p \in A \right) \geq \PP\left(    \tr_N\left( \frac{X}{\sqrt{N}}\right)^p \in B(s,\eta) \right) = \PP\left( X \in V\right),\end{equation}
where
$$ V = \Big\{Y\in \mathcal{H}_N^{(\beta)} :  \Big| \tr_N\left( \frac{Y}{\sqrt{N}} \right)^p - \langle \sigma_{sc},x^p\rangle - \tr H^p\Big| <\eta \Big\}.$$
But,
\begin{align*}
\PP\left( X\in V \right) & = \PP\left( X-N^{\frac{1}{2}+\frac{1}{p}}H \in V- N^{\frac{1}{2}+\frac{1}{p}} H \right)  \\
& = \frac{1}{Z_N^{(\beta)}}\int_{V-N^{\frac{1}{2}+\frac{1}{p}}H} e^{-\frac{1}{2} q \left(Y+N^{\frac{1}{2}+\frac{1}{p}}H\right)} d\ell_N^{(\beta)}(Y),
\end{align*}
where $d\ell_N^{(\beta)}$ denotes the Lebesgue measure on $\mathcal{H}_N^{(\beta)} $, and $Z_N^{(\beta)} = \int e^{-\frac{1}{2}q(Y)} d\ell^{(\beta)}_N(Y)$.
We re-write this probability as,
$$\PP\left( X\in V \right) = e^{-\frac{1}{2}q(H) N^{1+\frac{2}{p}}} \EE\left( \Car_{\{X\in V-N^{\frac{1}{2}+\frac{1}{p}}H\}} e^{-N^{\frac{1}{2}+\frac{1}{p}} \Re \psi(H,Y)} \right),$$
where $\psi$ is the bilinear (or sesquilinear form if $\beta=2$) form associated to the quadratic form $q$.
Using Jensen's inequality, we get
\begin{align*}
\PP\left( X\in V \right)  & \geq e^{-\frac{1}{2}q(H) N^{1+\frac{2}{p}}} \PP\left(X\in V-N^{\frac{1}{2}+\frac{1}{p}}H\right)\\
& \times\exp\left( -N^{\frac{1}{2}+\frac{1}{p}}\EE\left(  \Re\psi(H,Y) \frac{ \Car_{\{X\in V-N^{\frac{1}{2}+\frac{1}{p}}H\}} }{\PP\left(X\in V-N^{\frac{1}{2}+\frac{1}{p}}H\right)} \right)\right).
\end{align*}
Using twice Cauchy-Schwarz inequality yields,
\begin{align*}\EE\left(- \Re\psi(H,X)\frac{ \Car_{\{X\in V-N^{\frac{1}{2}+\frac{1}{p}}H\}} }{\PP\left(X\in V-N^{\frac{1}{2}+\frac{1}{p}}H\right)} \right)&\geq - \frac{1}{ \PP\left(X\in V-N^{\frac{1}{2}+\frac{1}{p}}H\right)}\left(\EE\left(  \Re\psi\left(X,H\right)\right)^{2} \right)^{1/2}.\\
&= - \frac{1}{ \PP\left(X\in V-N^{\frac{1}{2}+\frac{1}{p}}H\right)} q(H)^{1/2}\left( \EE q(X)^2 \right)^{1/2}.\\
& =  - \frac{1}{ \PP\left(X\in V-N^{\frac{1}{2}+\frac{1}{p}}H\right)} q(H)^{1/2}.
\end{align*}
But $\PP\left(X\in V-N^{\frac{1}{2}+\frac{1}{p}}H\right) \geq  \PP\left( X\in V_r \right)\geq 1/2 $. Thus, we have
$$\PP\left( X\in V \right)  \geq \frac{1}{2} \exp\left(-\frac{1}{2}q(H) N^{1+\frac{2}{p}} -2q(H)^{1/2}N^{\frac{1}{2}+\frac{1}{p}} \right).$$
Since $H\in r\mathcal{K}_N$, we get
$$\PP\left( X\in V \right)  \geq \frac{1}{2} \exp\left(-\frac{1}{2}q(H) N^{1+\frac{2}{p}} -2rN^{\frac{1}{2}+\frac{1}{p}} \right).$$
As the above inequality is true for any $H\in r\mathcal{K}_N$ such that $s = \langle \sigma_{sc},x^p\rangle + \tr H^p$, we have
$$\PP\left( X\in V \right)  \geq \frac{1}{2} \exp\left(-J_{p,N}(s) N^{1+\frac{2}{p}} -2rN^{\frac{1}{2}+\frac{1}{p}} \right)\geq  \exp\left(-\left(J_{p}(s)+\delta\right) N^{1+\frac{2}{p}} -2rN^{\frac{1}{2}+\frac{1}{p}} \right).$$

We deduce from \eqref{lowerbound} that
$$\liminf_{N\to +\infty}  \frac{1}{N^{1+2/p}} \log \PP\left(  \tr_N\left( \frac{X}{\sqrt{N}} \right)^p \in A \right) \geq - J_p(s)-\delta.$$
Letting $\delta$ go to $0$, we get the lower bound.
\end{proof}

\newpage

\section{Large deviations of moments of the empirical measure of $\beta$-ensembles} \label{betaens}

We now give a proof of Theorem \ref{ldp1}. In order to ease the notation, we will write $\PP^N_{V}$ for $\PP^N_{V, \beta}$, as well as $Z^N_V$ instead of $Z^N_{V,\beta}$.

 \subsection{Deviations inequalities and convergence of the moments}
The first step of the proof of Theorem \ref{ldp1} will be to show, under the mild assumption \eqref{assumpgene}, the convergence in expectation, of the moments of the empirical measure towards the moments of the equilibrium measure $\sigma_{\beta}^V$. To do so, we will need a control on the tail probability of 
$$ \max_{1\leq i \leq N} |\lambda_i|,$$
under $\PP^N_{V}$. To this end we prove a more general deviations inequality, which will be crucial later.

\begin{Pro}\label{probatrou}Let $N\in \NN$, $N\geq 2$. Under assumption \eqref{assumpgene},
there is a constant $M_0>0$, depending only on $V$ and $\beta$, such that for any $M\geq M_0$  and $1\leq k \leq N$,
$$\PP^N_{V} \left( L_N\left( I_M^c\right) \geq \frac{k}{N}\right) \leq \exp\left( -C kNV_M\right),$$
where $I_M = [-M,M]$, $C$ is a positive constant depending on $V$ and $\beta$, and where $V_M = \inf_{|\lambda| \geq M} V(\lambda)$.
\end{Pro}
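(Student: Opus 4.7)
The strategy combines an exchangeability-based union bound with a comparison between a restricted partition function (where $k$ specified particles are forced outside $I_M$) and the full partition function $Z^N_V$. Assumption \eqref{assumpgene} guarantees that pushing a particle far from the origin incurs an $e^{-NV_M}$ penalty from the external potential that cannot be compensated by the logarithmic Vandermonde interaction.

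Since the density in \eqref{defbeta} is symmetric in $(\lambda_1, \ldots, \lambda_N)$, a union bound over $k$-subsets gives
$$\PP^N_V\left(L_N(I_M^c) \geq \frac{k}{N}\right) \leq \binom{N}{k}\,\PP^N_V\left(|\lambda_1|, \ldots, |\lambda_k| > M\right).$$
The joint probability on the right equals the integral of the density in \eqref{defbeta} over $\{|\lambda_i|>M,\, i \leq k\}$ divided by $Z^N_V$. On this event, for each $i \leq k$ I would write $e^{-NV(\lambda_i)} \leq e^{-(1-\veps)NV_M}\,e^{-\veps NV(\lambda_i)}$ with $\veps \in (0,1)$ chosen close to $1$, extracting the factor $e^{-k(1-\veps)NV_M}$ that will appear in the final bound.

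To estimate what remains, I would apply the elementary inequality $|\lambda_i-\lambda_j| \leq (1+|\lambda_i|)(1+|\lambda_j|)$ to every Vandermonde factor involving one of the $k$ ``outside'' indices. The outside integrations then factorize into single-variable integrals $\int(1+|\lambda|)^{\beta(N-1)} e^{-\veps NV(\lambda)}\, d\lambda$, each of which is bounded by $K^N$ for some $K=K(V,\beta)$, provided $\veps$ is chosen with $\veps \beta' > \beta$, as permitted by \eqref{assumpgene}. What remains is (up to absorbing a factor $\prod_{j>k}(1+|\lambda_j|)^{\beta k}$ into the potential) a partition function of the form $Z^{N-k}_{\tilde V,\beta}$, where $\tilde V$ differs from $V$ by a term of order $k/N$ (the correction $(\beta k/N)\log(1+|x|)$). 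Using the $N^2$-scale free-energy asymptotics $\log Z^n_{V',\beta} = -n^2 \inf I^{V'}_\beta + O(n)$ recalled in \cite[Theorem 2.6.1]{Guionnet}, one then bounds $\log\!\bigl(Z^{N-k}_{\tilde V,\beta}/Z^N_{V,\beta}\bigr) = O(kN)$, with a constant depending only on $V$ and $\beta$.

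Combining these ingredients, and absorbing $\binom{N}{k} \leq 2^N \leq e^{(\log 2)\, kN}$ for $k \geq 1$, we obtain
$$\PP^N_V\left(L_N(I_M^c) \geq k/N\right) \leq \exp\left(-k(1-\veps)NV_M + C_0\, kN\right)$$
for some $C_0 = C_0(V,\beta)$. Choosing $M_0$ so large that $(1-\veps)V_{M_0} \geq 2C_0$---possible since $V_M \to \infty$ under \eqref{assumpgene}---yields the claim with $C = (1-\veps)/2$. The main obstacle is the partition function comparison in the third paragraph: a naive bound $\prod_{i<j}|\lambda_i-\lambda_j|^\beta \leq \prod_i(1+|\lambda_i|)^{\beta(N-1)}$ applied to \emph{all} pairs would lose an $e^{O(N^2)}$ factor, too large to be absorbed by the $e^{-kNV_M}$ decay when $k$ is of order $1$; one must preserve the true Coulomb-gas structure on the ``inside'' coordinates to retain the sharper $e^{O(kN)}$ estimate.
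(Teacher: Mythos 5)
Your outline follows the same structure as the paper's argument: exchangeability reduction to the ``$k$ largest in absolute value'' coordinates, bounding the Vandermonde cross-terms between inside and outside particles by $|\lambda_i-\lambda_j| \leq (1+|\lambda_i|)(1+|\lambda_j|)$ (plus the $\log 2$ trick for ordered coordinates), absorbing the resulting $\log(1+|\cdot|)$ contribution into the potential via \eqref{assumpgene}, and then facing a ratio of partition functions that must be controlled at scale $e^{O(kN)}$. You correctly identify that this last ratio is the crux and that a naive all-pairs bound would lose $e^{O(N^2)}$, which is fatal when $k$ is of order $1$.

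The gap is in how you propose to bound that ratio. You invoke ``$N^2$-scale free-energy asymptotics $\log Z^n_{V',\beta} = -n^2\inf I^{V'}_\beta + O(n)$, recalled in \cite[Theorem 2.6.1]{Guionnet}''. That reference establishes only the leading-order limit $\lim_n n^{-2}\log Z^n_{V',\beta} = -\inf I^{V'}_\beta$, i.e.\ an $o(n^2)$ remainder, not $O(n)$. An $O(n)$ expansion of the free energy is a much finer statement (Borot--Guionnet-type asymptotics, requiring extra regularity) and is not available here under the mild assumption \eqref{assumpgene}. With only $o(N^2)$, the bound $\log\bigl(Z^{N-k}_{\tilde V,\beta}/Z^N_{V,\beta}\bigr) = O(kN)$ does not follow, and for $k=1$ the error $o(N^2)$ can swamp the penalty $e^{-NV_M}$. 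The paper sidesteps this by proving the ratio bound directly as Lemma \ref{encadrefuncpart}, via an elementary computation: the upper bound comes from bounding the cross-interaction terms and the diagonal interaction uniformly below (using $\inf_{x\neq y}\bigl(\tfrac12 V(x)+\tfrac12 V(y)-\tfrac\beta2\log|x-y|\bigr) > -\infty$), while the lower bound uses Jensen's inequality twice together with a crude lower bound on $\int\log|\lambda-\mu|e^{-V(\lambda)}\,d\lambda$. That self-contained lemma is exactly the missing ingredient; without it (or an equivalent direct argument), your proof does not close. A secondary issue is that even if an $O(n)$ remainder were available, you would also need quantitative continuity of $V' \mapsto \inf I^{V'}_\beta$ to conclude that the $O(k/N)$ perturbation of the potential changes $\inf I$ by only $O(k/N)$; this step is also unjustified as written.
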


In order to prove this deviation inequality, we will need a rough control on the ratio of the partition functions $Z^N_{V}$ and $Z^{N-k}_{\frac{NV}{N-k}}$. This is the object of the following lemma.
\begin{Lem}\label{encadrefuncpart}
There are some constants $c_1, c_2$ depending on $V$ and $\beta$, such that for any $N \in \NN$, and $k\leq N$,
$$  c_1 Nk\leq \log \frac{Z^N_{V}}{Z^{N-k}_{\frac{NV}{N-k}}} \leq c_2N k,$$
where $Z^N_{V}$, and $Z^{N-k}_{\frac{NV}{N-k}}$ are defined in \eqref{partition func}.
\end{Lem}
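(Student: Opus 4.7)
The plan is to peel off a block of $k$ variables from $Z^N_V$. Splitting $\lambda=(\mu,\nu)\in\RR^{N-k}\times\RR^k$ inside the integral and decomposing the Vandermonde into the three blocks $\prod_{i<j\le N-k}|\mu_i-\mu_j|^\beta$, $\prod_{i,l}|\mu_i-\nu_l|^\beta$, $\prod_{l<l'}|\nu_l-\nu_{l'}|^\beta$ yields the identity
\[
\frac{Z^N_{V}}{Z^{N-k}_{NV/(N-k)}}=\EE_{NV/(N-k)}\!\Bigl[K(\mu)\Bigr],\quad K(\mu):=\int_{\RR^k}\prod_{i,l}|\mu_i-\nu_l|^\beta\prod_{l<l'}|\nu_l-\nu_{l'}|^\beta e^{-N\sum_l V(\nu_l)}d\nu,
\]
where $\EE_{NV/(N-k)}$ denotes expectation under $\PP^{N-k}_{NV/(N-k)}$. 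Everything reduces to framing $\EE_{NV/(N-k)}[K(\mu)]$ between two exponentials of size $e^{O(Nk)}$.

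For the upper bound I would dominate $K(\mu)$ pointwise using $|x-y|\le(1+|x|)(1+|y|)$, giving a bound by $\prod_i(1+|\mu_i|)^{\beta k}\prod_l(1+|\nu_l|)^{\beta(N-1)}$. Under assumption \eqref{assumpgene} and continuity of $V$ there exists $C_V$ such that $\beta\log(1+|\lambda|)\le V(\lambda)+C_V$ on $\RR$; applied to the $\nu$-variables this gives $(1+|\nu|)^{\beta(N-1)}e^{-NV(\nu)}\le e^{CN}e^{-V(\nu)}$, so each of the $k$ independent $\nu$-integrals contributes at most $e^{CN}\int e^{-V}$, and applied to the $\mu$-variables it turns $\prod_i(1+|\mu_i|)^{\beta k}e^{-N\sum V(\mu_i)}$ into $e^{Ck(N-k)}\prod_i e^{-(N-k)V(\mu_i)}$. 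Hence $\EE_{NV/(N-k)}[K(\mu)]\le e^{CNk}\,Z^{N-k}_V/Z^{N-k}_{NV/(N-k)}$, and the remaining ratio is controlled through the interpolation identity
\[
\log Z^{N-k}_V-\log Z^{N-k}_{NV/(N-k)}=(N-k)\!\int_{1}^{N/(N-k)}\!\EE_{bV}\!\Bigl[\textstyle\sum_i V(\lambda_i)\Bigr]db,
\]
combined with a uniform bound $\EE_{bV}[V(\lambda_1)]\le C$ for $b$ in a compact neighborhood of $1$.

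For the lower bound I would restrict the $\nu$-integration in $Z^N_V$ to a fixed suitable region $\nu_l\in I_l$ (for instance $k$ pairwise-disjoint unit-length intervals close to some $y_0\in\RR$ where $V(y_0)$ is small), and the $\mu$-integration to those configurations with $\max_i|\mu_i|\le R$. Both restrictions lose at most an $e^{O(Nk)}$ factor: the $\mu$-restriction costs only a factor $1/2$ for $R$ large enough (by the exponential tightness of the empirical measure from the LDP in \cite[Theorem 2.6.1]{Guionnet}), while on the $\nu$-event all cross factors $|\mu_i-\nu_l|$ are uniformly bounded below by a positive constant, $V(\nu_l)$ is bounded by $\sup_{\cup I_l}V<\infty$, and the intra-$\nu$ Vandermonde contributes $e^{-O(k^2)}$ which is $e^{-O(Nk)}$ since $k\le N$. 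Integrating gives $\EE_{NV/(N-k)}[K(\mu)]\ge e^{c_1 Nk}$. The main technical obstacle is the uniform moment bound $\EE_{bV}[V(\lambda_1)]\le C$ used in the upper bound, since Proposition \ref{probatrou} cannot be invoked at this stage; I plan to extract it directly from the exponential tightness built into the LDP of \cite[Theorem 2.6.1]{Guionnet}.
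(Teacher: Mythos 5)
Your starting identity $\frac{Z^N_{V}}{Z^{N-k}_{NV/(N-k)}}=\EE^{N-k}_{NV/(N-k)}[K(\mu)]$ and the overall ``peel off $k$ particles'' strategy are the same as the paper's, but both halves of your argument contain genuine gaps. For the upper bound, after the pointwise domination you are left with the ratio $Z^{N-k}_V/Z^{N-k}_{NV/(N-k)}=\EE^{N-k}_{NV/(N-k)}\big[e^{k\sum_i V(\lambda_i)}\big]$, and your interpolation identity trades this for the bound $\sup_{1\le b\le N/(N-k)}\EE^{N-k}_{bV}[V(\lambda_1)]\le C$. Two problems: first, $b$ ranges over $[1,N/(N-k)]$, which is not contained in a compact neighborhood of $1$ when $k$ is comparable to $N$, whereas the lemma is claimed for all $k\le N$ (convexity of $b\mapsto\log Z^{N-k}_{bV}$ would reduce matters to $b=1$, but you do not invoke it); second, and more seriously, the uniform first-moment bound $\EE^{n}_{bV}[V(\lambda_1)]\le C$ is exactly the type of statement the paper derives \emph{downstream} of this lemma (via Proposition \ref{probatrou} and Corollary \ref{convmoment}), and it does not follow from the weak-topology LDP of \cite[Theorem 2.6.1]{Guionnet} without a separate uniform-integrability argument that you leave unproved. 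The paper avoids this issue entirely: it orders the variables by decreasing absolute value and uses only the elementary pointwise bounds $\inf\{V(x)+V(y)-\beta\log|x-y| : x\neq y\}>-\infty$ and $\inf\{V(x)-\beta\log|x-y| : |y|\le|x|\}>-\infty$, which control the interaction of the top $k$ particles by their own potential alone, with no a priori moment estimate needed.

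The lower bound is where the proposal actually breaks. (i) The cross factors $|\mu_i-\nu_l|$ are \emph{not} uniformly bounded below on your event: the $\mu_i$'s populate the bulk and nothing prevents one of them from lying inside $I_l$; excising $\delta$-neighborhoods of the $N-k$ points $\mu_i$ forces $\delta\lesssim 1/N$ and costs a factor $e^{-cNk\log N}$, which overshoots $e^{c_1Nk}$. (ii) $k$ pairwise-disjoint unit intervals occupy a region of diameter at least $k$, so $\sup_{\cup_l I_l}V$ necessarily grows with $k$ under \eqref{assumpgene}, and the factor $e^{-N\sum_l V(\nu_l)}$ is no longer $e^{-O(Nk)}$; shrinking the intervals so they fit in a bounded set reintroduces a divergent Vandermonde cost among the $\nu_l$'s. (iii) The bound $\PP^{N-k}_{NV/(N-k)}(\max_i|\mu_i|\le R)\ge 1/2$ does not follow from exponential tightness of the empirical measure in the weak topology, which controls the \emph{fraction} of escaping particles but not the extreme one; controlling the largest particle is precisely the content of Proposition \ref{probatrou}, whose proof relies on this lemma. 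All three difficulties are resolved at once by the paper's device: integrate the $k$ variables against the product probability density $e^{-V(\nu)}d\nu/\int e^{-V}$ on all of $\RR$ and apply Jensen's inequality to pull the logarithm inside the expectation, so that only integrated quantities such as $\int\log|\lambda-\mu|e^{-V(\lambda)}d\lambda\ge-2e^{-\inf V}$ and $\int V e^{-V}/\int e^{-V}$ need to be finite --- which they are because the logarithmic singularity is integrable. You should rework the lower bound along those lines.
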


\begin{proof}
From the invariance under permutation of the coordinates of the measures $\PP^{N}_{V}$ we have
$$\frac{Z^N_{V}}{Z^{N-k}_{\frac{NV}{N-k}}} =\frac{N !}{Z^{N-k}_{\frac{NV}{N-k}}}  \int_{|\lambda_1|\geq ...\geq |\lambda_{N}|} e^{- N\sum_{i=1}^{N} V\left( \lambda_{i}\right)} \prod_{1\leq i< j\leq N} \left| \lambda_{i}-\lambda_{j} \right|^{\beta} \prod_{i=1}^N d\lambda_i.
$$
Splitting the $\lambda_i$'s between the $k$ first largest in absolute value and the rest, and using again the invariance under permutation of the coordinates, we can bring out the measure $\PP^{N-k}_{ \frac{NV}{N-k}}$, which gives
\begin{align*}
 \frac{Z^N_{V}}{Z^{N-k}_{\frac{NV}{N-k}}} &=\frac{N !}{\left( N-k\right) !} \EE^{N-k}_{\frac{NV}{N-k}} \Big( \int_{|\lambda_1|\geq ...\geq |\lambda_{k}|} e^{- N\sum_{i=1}^{k} V\left( \lambda_{i}\right)} \prod_{1\leq i< j\leq k} \left| \lambda_{i}-\lambda_{j} \right|^{\beta} \\
 &\times e^{\beta\left(N-k\right) \sum_{i=1}^{k} \left\langle L_{N-k} , \log \left|\lambda_{i}-.\right| \right\rangle } \Car_{\mathrm{supp}\left( L_{N-k} \right) \subset \left[-\lambda_{k}, \lambda_{k}\right] }\prod_{i=1}^k d\lambda_i \Big ),
\end{align*}
where $L_{N-k} = \frac{1}{N-k} \sum_{i=k+1}^{N} \delta_{\lambda_{i}}$. We re-write this equality as the following,
 \begin{align*}
 \frac{Z^N_{V}}{Z^{N-k}_{\frac{NV}{N-k}}} & = \frac{N !}{\left( N-k\right) !}\EE^{N-k}_{\frac{NV}{N-k}} \Big( \int_{|\lambda_1|\geq ...\geq |\lambda_{k}|} e^{- k^2 \int_{x\neq y} f(x,y) dL_k(x) dL_k(y)} \\
 &\times e^{-\left(N-k\right) \sum_{i=1}^{k}\left( V\left(\lambda_i \right) -  \beta \left\langle L_{N-k} , \log \left|\lambda_{i}-.\right| \right\rangle\right) }\Car_{\mathrm{supp}\left( L_{N-k} \right) \subset \left[-\lambda_{k}, \lambda_{k}\right] } \prod_{i=1}^k e^{- V\left(\lambda_i \right) } d\lambda_i\Big ),
\end{align*}
with $L_k = \frac{1}{k}\sum_{i=1}^k \delta_{\lambda_i}$, and $f(x,y) = \frac{1}{2}V(x)+ \frac{1}{2}V(y) - \frac{\beta}{2} \log \left|x-y\right|$. Note that from the assumption \eqref{assumpgene} on $V$, we have
$$ c :=\inf \{ f(x,y) : x \neq y \} > -\infty, \ c' := \inf\left\{ V(x) - \beta \log \left|x-y\right| : |y|\leq |x| \right\} > -\infty.$$ 
Thus,
$$
 \frac{Z^N_{V}}{Z^{N-k}_{\frac{NV}{N-k}}}  \leq \binom{N}{k}e^{-k^2 c} e^{-(N-k)kc'} \left(\int e^{-V(x)}dx\right)^k.
$$
As $\binom{N}{k} \leq N^k$, we get 
$$ \frac{Z^N_{V}}{Z^{N-k}_{\frac{NV}{N-k}}}  \leq e^{c_2 Nk},$$ with $c_2$ some constant depending on $V$ and $\beta$.

For the lower bound, we write similarly as for the upper bound,
\begin{align*}
\log  \frac{Z^N_{V}}{Z^{N-k}_{\frac{NV}{N-k}}}  & = \log \EE^{N-k}_{\frac{NV}{N-k}} \Big( \int e^{-(N-1)  \sum_{i=1}^k V(\lambda_i)  } \prod_{1\leq i<j\leq k} \left|\lambda_i - \lambda_j \right|^{\beta}\nonumber  \\
 &\times e^{\beta\left(N-k\right) \sum_{i=1}^{k} \left\langle L_{N-k} , \log \left|\lambda_{i}-.\right| \right\rangle } \prod_{i=1}^k e^{- V\left(\lambda_i \right) } d\lambda_i\Big).
\end{align*}
 Using twice Jensen's inequality, we get
\begin{align}
\log  \frac{Z^N_{V}}{Z^{N-k}_{\frac{NV}{N-k}}}  
& \geq  \EE^{N-k}_{\frac{NV}{N-k}} \Big( \log \int e^{-(N-1)  \sum_{i=1}^k V(\lambda_i)  } \prod_{1\leq i<j\leq k} \left|\lambda_i - \lambda_j \right|^{\beta} \nonumber  \\
 &\times e^{\beta\left(N-k\right) \sum_{i=1}^{k} \left\langle L_{N-k} , \log \left|\lambda_{i}-.\right| \right\rangle } \prod_{i=1}^k \frac{e^{- V\left(\lambda_i \right) }}{\int e^{-V(x) } dx} d\lambda_i\Big ) + k\log \left( \int e^{-V(x) }dx \right).\nonumber \\
&\geq -(N-1)k \Big( \int V(\lambda) \frac{e^{-V(\lambda) } d\lambda}{\int e^{-V(x)} dx }\Big) + 
\frac{k(k-1)\beta}{2} \Big( \int \log \left|\lambda- \mu \right| \frac{e^{-V(\lambda) -V(\mu)}d\lambda d\mu}{\left(\int e^{-V(x)}dx \right)^2}\Big)\nonumber \\
& +\beta k (N-k)  \EE^{N-k}_{\frac{NV}{N-k}}  \Big( \int \left\langle L_{N-k} , \log \left|\lambda-.\right| \right\rangle  \frac{e^{-V(\lambda) }d\lambda}{ \int e^{-V(x)} dx }\Big)+ k\log \Big( \int e^{-V(x) }dx \Big).\nonumber
\end{align}
But for any $\mu \in \RR$,
\begin{align*}
\int \log |\lambda -\mu| e^{-V(\lambda) } d\lambda &= \int_0^{+\infty} \log x \left( e^{-V(\mu +x) } + e^{-V(\mu-x)} \right) dx \\
&\geq \int_0^1 \log x \left( e^{-V(\mu +x) } + e^{-V(\mu-x)} \right) dx.
\end{align*}
As $\inf V < -\infty$, we have
$$ \int \log |\lambda -\mu| e^{-V(\lambda) } d\lambda\geq 2e^{-\inf V} \int_0^1 \log(x) dx =  -2e^{-\inf V}.$$
Thus,
$$\EE^{N-k}_{\frac{NV}{N-k}}  \left( \int \left\langle L_{N-k} , \log \left|\lambda-.\right| \right\rangle  \frac{e^{-V(\lambda)} d\lambda}{ \int e^{-V(x)} dx }\right) \geq -\frac{2e^{-\inf V}}{\int e^{-V(x) } dx }.$$ 
We can conclude that 
$$\log  \frac{Z^N_{V}}{Z^{N-k}_{\frac{NV}{N-k}}} \geq c_1Nk,$$
with $c_1$ a constant depending on $V$ and $\beta$.

\end{proof}

We are now ready to give a proof of Proposition  \ref{probatrou}.

\begin{proof}[Proof of Proposition \ref{probatrou}]
We can write as in the proof of Lemma \ref{encadrefuncpart},
\begin{align*}
\PP^N_{V} &\left( L_N\left( I_M^c\right) \geq  \frac{k}{N}\right)  \leq \frac{N !}{\left( N-k\right) !}  \frac{ Z^{N-k}_{\frac{NV}{N-k}}} {Z^N_{V}}    \\
 & \times \EE^{N-k}_{\frac{NV}{N-k}} \Big( \int_{|\lambda_1|\geq ...\geq |\lambda_{k}|\geq M}  e^{- N \sum_{i=1}^{k}V\left(\lambda_{i}\right)} \prod_{1\leq i< j\leq k} \left| \lambda_{i } -\lambda_{j}\right|^{\beta}\\
& \times  e^{\beta \left( N-k \right) \sum_{i=1}^{k}\left\langle L_{N-k} , \log \left|\lambda_{i}-.\right| \right\rangle } \Car_{\mathrm{supp}\left( L_{N-k} \right) \subset \left[-\lambda_{k}, \lambda_{k}\right]} \prod_{i=1}^k  d\lambda_i\Big),
 \end{align*}
with $L_{N-k} = \frac{1}{N-k} \sum_{i=k+1}^{N}\delta_{\lambda_i}$ . 

As for all $x, y\in \RR$, $\log |x-y| \leq \log \left( 1+|x|\right) + \log \left(1+|y|\right)$, and for any $|x| \leq |y|$, $\log |x-y| \leq \log 2 + \log(1+|x|)$,  we get 
\begin{align*}
\PP^N_{V} \left( L_N\left( I_M^c\right) \geq  \frac{k}{N}\right) & \leq \frac{N !}{\left( N-k\right) !}\frac{ Z^{N-k}_{\frac{NV}{N-k}}} {Z^N_{V}}  e^{k(N-k)\log 2}   \\
& \times \EE^{N-k}_{\frac{NV}{N-k}} \Big( \int_{|\lambda_1|\geq ...\geq |\lambda_{k}|\geq M}  e^{- N \sum_{i=1}^{k}V\left(\lambda_i\right)} e^{  \beta k \sum_{i=1}^{k} \log \left( 1+ \left| \lambda_{i}\right| \right)} \\
& \times e^{\beta \left( N- k\right) \sum_{i=1}^{k} \log \left( 1+\left|\lambda_{i} \right| \right) } \Car_{\mathrm{supp}\left( L_{N-k} \right) \subset \left[-\lambda_{k}, \lambda_{k}\right]} \prod_{i=1}^k  d\lambda_i\Big).
\end{align*}
From  \eqref{assumpgene}, we deduce that there is some $c_{0}>0$, such that for $|y|$ large enough,
$$  V(y) - \beta \log \left( 1+ |y| \right) \geq c_0 V(y).$$
Thus, for $M$ large enough,
\begin{align*}
\PP^N_{V} \left( L_N\left( I_M^c\right) \geq  \frac{k}{N}\right)
& \leq \frac{N !}{\left( N-k\right) !} \frac{ Z^{N-k}_{\frac{NV}{N-k}}} {Z^N_{V}}    \int_{|\lambda_1|\geq ...\geq |\lambda_{k}|\geq M}  e^{-c_0 N \sum_{i=1}^{k}V\left(\lambda_i\right)}  \prod_{i=1}^k  d\lambda_i\\
& =   \binom{N}{k}\frac{ Z^{N-k}_{\frac{NV}{N-k}}} {Z^N_{V}}    \left(\int_{\left|\lambda\right|\geq M}  e^{-c_0N V\left(\lambda\right)}   d\lambda \right)^k.
\end{align*}
But,
$$ \int_{\left|\lambda\right|\geq M}  e^{-c_0N V\left(\lambda\right)}   d\lambda \leq
e^{-c_0(N-1) V_M } \int e^{-V(\lambda) } d\lambda \leq  c_3e^{-\frac{c_0}{2}N V_M},$$
with some constant $c_3>0$, and where we used in the last inequality the fact that $N\geq 2$.
We deduce from Proposition \ref{encadrefuncpart} that for $M$ large enough,
$$
\PP^N_{V} \left( L_N\left( I_M^c\right) \geq  \frac{k}{N}\right)  \leq   (c_3N)^{k} e^{kNc_2}e^{-\frac{c_0}{2}kN V_M }.$$
As $ \lim_{M\to +\infty} V_M = +\infty$, we can find some constants $M_0>0$, and $C>0$, depending on $V$ and $\beta$, such that for any $M>M_0$, 
$$
\PP^N_{V} \left( L_N\left( I_M^c\right) \geq  \frac{k}{N}\right)  \leq  e^{-CkNV_M}.$$

%
\end{proof}

As a consequence of the previous Proposition \ref{probatrou}, we have the convergence of the expectation under $\PP^N_{V}$, of the moments of the empirical measure, as stated in the next corollary.
\begin{Cor}\label{convmoment}
Under assumption \eqref{assumpgene}, we have for any $p \in \NN$, 

$$ \EE^N_{V}\left\langle L_N, x^p\right \rangle \underset{N\to +\infty}{\longrightarrow} \langle \sigma_{\beta}^V, x^p\rangle,$$
where $\EE_{V}^N$ denotes the expectation with respect to $ \PP^N_{V}$.
\end{Cor}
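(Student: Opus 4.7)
The plan is to combine the convergence in probability $L_N \to \sigma_\beta^V$ (a consequence of the LDP for the empirical measure under \eqref{assumpgene}, recalled in the introduction) with a uniform integrability estimate supplied by Proposition \ref{probatrou}.

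First, fix a continuous cutoff $\psi_M : \RR \to [0,1]$ with $\psi_M \equiv 1$ on $[-M,M]$ and $\supp \psi_M \subset [-(M+1), M+1]$, and split
$$\EE^N_V \langle L_N, x^p\rangle = \EE^N_V \langle L_N, x^p \psi_M \rangle + \EE^N_V \langle L_N, x^p(1 - \psi_M)\rangle.$$
For each fixed $M$, the function $x^p \psi_M$ is continuous and bounded, so the weak convergence $L_N \to \sigma_\beta^V$ in probability together with bounded convergence yields
$$\EE^N_V \langle L_N, x^p \psi_M\rangle \underset{N\to +\infty}{\longrightarrow} \langle \sigma_\beta^V, x^p \psi_M\rangle.$$
Since $\sigma_\beta^V$ is compactly supported, for $M$ large enough the right-hand side coincides with $\langle \sigma_\beta^V, x^p\rangle$.

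To handle the tail uniformly in $N$, apply the layer cake formula to obtain
$$\bigl|\EE^N_V\langle L_N, x^p(1-\psi_M) \rangle \bigr| \leq \EE^N_V \langle L_N, |x|^p \Car_{|x| > M}\rangle = M^p\, \EE^N_V L_N(I_M^c) + \int_M^{+\infty} p\, t^{p-1}\, \EE^N_V L_N(I_t^c)\, dt.$$
Writing $\EE^N_V L_N(I_t^c) = \sum_{k=1}^N (1/N)\, \PP^N_V(L_N(I_t^c) \geq k/N)$ and summing the geometric bound from Proposition \ref{probatrou} gives, for $t \geq M_0$,
$$\EE^N_V L_N(I_t^c) \leq \frac{e^{-CNV_t}}{N\bigl(1 - e^{-CNV_t}\bigr)}.$$
Assumption \eqref{assumpgene} provides $\delta > 0$ with $V_t \geq (1+\delta)\beta' \log t$ for $t$ large, hence $e^{-CNV_t} \leq t^{-CN(1+\delta)\beta'}$. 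Choosing $N_0$ so that $CN_0(1+\delta)\beta' > p+1$, this tail bound is integrable against $t^{p-1}$ on $[M, +\infty)$ and tends to zero as $M \to +\infty$, uniformly in $N \geq N_0$. Letting first $N \to +\infty$ (to dispose of the $\psi_M$ piece) and then $M \to +\infty$ (to dispose of the tail) yields the claim.

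The only genuine subtlety is that the LDP provides weak convergence of $L_N$, which does not by itself transfer to unbounded polynomial test functions; Proposition \ref{probatrou} plays exactly the role of furnishing the missing uniform integrability.
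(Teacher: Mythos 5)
Your argument is correct and follows essentially the same route as the paper: weak convergence in probability of $L_N$ (from the LDP) handles the truncated part, and Proposition \ref{probatrou} supplies the uniform integrability needed to pass to the unbounded test function $x^p$. The only difference is in the execution of the tail estimate — the paper bounds $\langle L_N,|x|^k\rangle$ by $\max_i|\lambda_i|^k$ and uses Proposition \ref{probatrou} only with $k=1$, whereas you perform a layer-cake decomposition of $\EE^N_V L_N(I_t^c)$ summed over all $k$ — but this is a minor variation on the same idea.
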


\begin{proof}
Since $(L_N)_{N\geq 1}$ follows  a LDP with speed $N^2$ (see \cite[Theorem 2.6.1]{Guionnet}), and rate function whose minimum is achieved at $\sigma_ {\beta}^V$, we deduce that $(L_N)_{N\in \NN}$ converges weakly in probability to $\sigma_{\beta}^V$ under $\PP^N_{V}$. Thus, it is enough to show that for any $k \in \NN$, 
$$ \sup_{N\geq N_0}\EE^N_{V}\langle L_N, \left|x\right|^k\rangle < +\infty,$$
for some  $N_0\geq 1$.

Let $k \in \NN$. We have $\langle L_N,|x|^k\rangle \leq \max_{1\leq i\leq N} |\lambda_i|^k$. Besides, we know by Proposition \ref{probatrou} that
$$ \PP^N_{V} \left( \max_{1\leq i\leq N} |\lambda_i| > M \right) \leq e^{-CNV_M},$$
for any $M>M_0$, where $C$ and $M_0$ are some positive constants. Thus,
$$ \EE^N_{V}  \max_{1\leq i\leq N} |\lambda_i|^k \leq M_0^k + \int_{M_0}^{+\infty} k x^{k-1} e^{-CNV_x}dx.$$
By assumption we know that for $|x|$ large enough, $ V_x \geq \beta' \log |x|$, with  $\beta'>1$, so that for $M_0$ large enough,
$$ \EE^N_{V}  \max_{1\leq i\leq N} |\lambda_i|^k \leq M_0^k +  \int_{M_0}^{+\infty} k x^{k-1}x^{-C\beta' N} dx.$$
We deduce that for $N\geq (k+1)/C\beta'$, and $M_0$ large enough,
\begin{equation} \label{momentvpmax} \EE^N_{V}  \max_{1\leq i\leq N} |\lambda_i|^k \leq M_0^k +  \int_{M_0}^{+\infty} k x^{-2} dx=M_0^k+\frac{k}{M_0},\end{equation}
which yields the claim.
\end{proof}

\subsection{An exponential equivalence}

The goal of this section is to prove that the large deviations of $m_{p,N}$ are due to the deviations of the $\log N$ largest in absolute value $\lambda_i$'s . More precisely, we will prove the following proposition.

\begin{Pro}\label{expoequiv}For any $p\in \NN$, $p>\alpha$, and $\lambda_1,...,\lambda_N \in \RR$, we denote by $T_{p,N} $ the truncated moment
$$T_{p,N} = \frac{1}{N}\sum_{i=1}^{\log N} {\lambda_{i}^*}^p,$$
where $\lambda_1^*,...,\lambda_N^*$ is the rearrangement of the $\lambda_i$'s by decreasing absolute values.
 Under the notation and assumption of Theorem \ref{ldp1}, we have for any $t>0$,
$$ \lim_{N \to +\infty} \frac{1}{N^{1+\alpha/p}} \log \PP^N_{V} \left( \left| m_{p,N} -\langle \sigma_{\beta}^V,x^p\rangle - T_{p,N} \right|> t \right) = - \infty.$$

\end{Pro}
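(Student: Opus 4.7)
The plan is to combine Proposition \ref{probatrou} for the extreme eigenvalues with the LDP for $L_N$ at speed $N^2$ (recalled in the introduction) to argue that, after removing the $\log N$ largest eigenvalues in absolute value, the remaining ones contribute essentially $\langle\sigma_\beta^V,x^p\rangle$ to $m_{p,N}$. The speed comparison $N^2\gg N^{1+\alpha/p}$ holds because $p>\alpha\geq 2$, so the genuine bulk will be handled directly by the $L_N$-LDP.

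First I would fix a truncation $M_N = N^{1/p}(\log N)^{-\gamma}$ with $\gamma\in(1/p,1/\alpha)$, an interval that is nonempty precisely because $p>\alpha$. Applying Proposition \ref{probatrou} with $k=\lfloor\log N\rfloor$ and $M=M_N$ yields
\[
\PP^N_V\!\left(L_N(I_{M_N}^c)>\tfrac{\log N}{N}\right)\leq \exp\!\left(-CN^{1+\alpha/p}(\log N)^{1-\alpha\gamma}\right),
\]
which is negligible at speed $N^{1+\alpha/p}$ since $\alpha\gamma<1$. On the complementary event $E_N$, the top $\log N$ order statistics contain every index with $|\lambda_i|>M_N$, hence
\[
m_{p,N}-T_{p,N}=\frac{1}{N}\sum_{|\lambda_i|\leq M_N}\lambda_i^p+O\!\Big(\tfrac{\log N\cdot M_N^p}{N}\Big),
\]
with remainder $O((\log N)^{1-p\gamma})\to 0$ because $\gamma>1/p$.

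Next, fix $R\geq M_0$ with $\supp\sigma_\beta^V\subset[-R,R]$ and $\eta>0$, and split the truncated sum into a bulk piece $I_1=\int_{|x|\leq R+\eta}x^p\,dL_N$ and an intermediate piece $I_2=\frac{1}{N}\sum_{R+\eta<|\lambda_i|\leq M_N}\lambda_i^p$. For $I_1$, introducing a smooth cutoff $\chi$ equal to $1$ on $[-R-\eta/2,R+\eta/2]$ and supported in $[-R-\eta,R+\eta]$, one has $|I_1-\langle L_N,x^p\chi\rangle|\leq (R+\eta)^p L_N(I_{R+\eta/2}^c)$, and both $|\langle L_N,x^p\chi\rangle-\langle\sigma_\beta^V,x^p\rangle|$ and $L_N(I_{R+\eta/2}^c)$ are bounded by any fixed $\delta>0$ with probability $\geq 1-e^{-cN^2}$ by the LDP for $L_N$ (since $\sigma_\beta^V(I_{R+\eta/2}^c)=0$ and $x^p\chi$ is continuous bounded). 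For $I_2$ I use dyadic scales $u_k=2^k(R+\eta)\leq M_N$ (so $O(\log N)$ scales) and apply Proposition \ref{probatrou} with threshold $k_0=sN^{\alpha/p}/V_{u_k}$; a union bound yields, with probability $\geq 1-(\log N)e^{-sN^{1+\alpha/p}}$, the simultaneous control $L_N(I_u^c)\leq 2^\alpha sN^{\alpha/p}/(bu^\alpha N)$ for all $u\in[R+\eta,M_N]$. The layer-cake identity
\[
\sum_i|\lambda_i|^p\Car_{R+\eta<|\lambda_i|\leq M_N}\leq (R+\eta)^p NL_N(I_{R+\eta}^c)+\int_{R+\eta}^{M_N}pu^{p-1}NL_N(I_u^c)\,du
\]
then gives $|I_2|\leq \frac{2^\alpha s}{b}(\log N)^{-\gamma(p-\alpha)}\to 0$.

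Combining all the pieces, for any $t>0$ and $s>0$ one obtains $\PP^N_V(|m_{p,N}-\langle\sigma_\beta^V,x^p\rangle-T_{p,N}|>t)\leq O(\log N)\exp(-\min(s,c)N^{1+\alpha/p})$ for $N$ large, and letting $s\to\infty$ yields the stated limit $-\infty$. The main difficulty lies in the intermediate range $[R+\eta,M_N]$: the dyadic truncation must be calibrated so that each shell costs at least $e^{-sN^{1+\alpha/p}}$ under Proposition \ref{probatrou} (so that the union bound over $O(\log N)$ shells stays super-exponentially negligible at scale $N^{1+\alpha/p}$) while the total shell contribution remains summable to an arbitrarily small quantity. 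Both properties hinge on the strict inequality $p>\alpha$, which makes the quantity $N^{\alpha/p-1}M_N^{p-\alpha}=(\log N)^{-\gamma(p-\alpha)}$ a negative power of $\log N$ at the chosen $M_N$.
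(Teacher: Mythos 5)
Your argument is sound and reaches the conclusion, but it takes a genuinely different route from the paper. The paper reduces Proposition \ref{expoequiv} to two lemmas: Proposition \ref{convmomenttronq} shows that $\frac{1}{N}\EE^N_V\sum_{i>\log N}{\lambda_i^*}^p\to\langle\sigma_\beta^V,x^p\rangle$, and Proposition \ref{concent} shows the bulk sum concentrates around its expectation. The concentration step relies on the $\alpha$-convex structure of the Gibbs measure: Proposition \ref{conckconv} (via Bobkov's inequality) gives a Lipschitz concentration bound, applied after truncating $x\mapsto x^p$ at a level $M_0=N^{\frac{1}{\alpha(p-1)}(1-\frac{\alpha}{p})}(\log N)^{-1/\alpha}$ chosen precisely so that the Lipschitz constant $pM_0^{p-1}$ is compatible with the concentration at speed $N^{1+\alpha/p}$; the contribution above $M_0$ is then treated by slicing into $\log\log N$ geometric windows, each controlled by Proposition \ref{probatrou}. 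You avoid the $\alpha$-convex machinery entirely: you control the fixed compact bulk $|\lambda_i|\le R+\eta$ directly by the $L_N$-LDP at speed $N^2\gg N^{1+\alpha/p}$, and you handle the intermediate range $[R+\eta,M_N]$ with $O(\log N)$ dyadic shells and Proposition \ref{probatrou}, which is feasible precisely because the layer-cake integral $N^{\alpha/p-1}\int^{M_N}u^{p-1-\alpha}du\approx(\log N)^{-\gamma(p-\alpha)}$ vanishes. Your truncation $M_N=N^{1/p}(\log N)^{-\gamma}$ is coarser (a higher power of $N$) than the paper's $M_0$, which you can afford since you never invoke the Lipschitz-concentration bound. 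The trade-off is that your argument crucially uses the $L_N$-LDP as a black box, whereas the paper's concentration-based route is more self-contained and the $\alpha$-convex tools (Proposition \ref{conckconv}) are reused anyway in the proof of Proposition \ref{LDPtruncmoment}, so they are not wasted effort. Two small points worth tightening if you write this up: (i) for the set $I_{R+\eta/2}^c$ you should pass to the closed set $\{|x|\ge R+\eta/2\}$ so that the upper semicontinuity of $\mu\mapsto\mu(F)$ applies cleanly; (ii) in the dyadic step, the threshold $k_0$ from Proposition \ref{probatrou} must be rounded to an integer, and one should record the extra $O(1/N)$ in $L_N(I_u^c)$ coming from that ceiling, which contributes $O(M_N^p/N)=O((\log N)^{-p\gamma})$ to $|I_2|$ and is harmless.
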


As a consequence of Proposition \ref{probatrou} and Corollary \ref{convmoment}, we have the following result.

\begin{Pro}\label{convmomenttronq}
Under assumption \eqref{assumpgene}, we have
$$ \frac{1}{N} \EE^N_{V} \Big( \sum_{i=\log N +1}^{N} {\lambda_i^*}^p\Big) \underset{N\to +\infty}{\longrightarrow}\langle \sigma_{\beta}^V,x^p\rangle.$$
\end{Pro}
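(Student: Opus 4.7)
The plan is straightforward and relies almost entirely on machinery already established. First I would observe that the ordering by decreasing absolute value is just a permutation of the eigenvalues, so
\begin{equation*}
\frac{1}{N}\sum_{i=1}^N (\lambda_i^*)^p \;=\; \frac{1}{N}\sum_{i=1}^N \lambda_i^p \;=\; \langle L_N, x^p\rangle,
\end{equation*}
and hence
\begin{equation*}
\frac{1}{N}\sum_{i=\log N +1}^N (\lambda_i^*)^p \;=\; \langle L_N, x^p\rangle \;-\; \frac{1}{N}\sum_{i=1}^{\log N} (\lambda_i^*)^p.
\end{equation*}
Taking expectations under $\PP_V^N$ and invoking Corollary \ref{convmoment}, the first term already converges to $\langle \sigma_\beta^V, x^p\rangle$, so the proposition reduces to showing that the contribution of the $\log N$ largest (in absolute value) eigenvalues vanishes in expectation.

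Next, since the $\lambda_i^*$ are ordered by decreasing modulus,
\begin{equation*}
\Bigl| \sum_{i=1}^{\log N} (\lambda_i^*)^p \Bigr| \;\leq\; \sum_{i=1}^{\log N} |\lambda_i^*|^p \;\leq\; (\log N)\,|\lambda_1^*|^p \;=\; (\log N)\,\max_{1\leq i \leq N}|\lambda_i|^p.
\end{equation*}
So everything reduces to controlling the $p$-th moment of $\max_i|\lambda_i|$ under $\PP_V^N$, uniformly in $N$. But this bound has already been produced inside the proof of Corollary \ref{convmoment}: the inequality \eqref{momentvpmax} gives, for $N$ large enough and $M_0$ sufficiently large,
\begin{equation*}
\EE_V^N \max_{1\leq i\leq N}|\lambda_i|^p \;\leq\; M_0^p + \frac{p}{M_0},
\end{equation*}
which is a constant $C_p$ independent of $N$.

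Combining these two observations,
\begin{equation*}
\Bigl| \EE_V^N \Bigl( \frac{1}{N}\sum_{i=1}^{\log N}(\lambda_i^*)^p\Bigr)\Bigr| \;\leq\; \frac{\log N}{N}\,C_p \;\xrightarrow[N\to +\infty]{}\;0,
\end{equation*}
and subtracting this from $\EE_V^N\langle L_N,x^p\rangle \to \langle \sigma_\beta^V, x^p\rangle$ finishes the argument. There is no real obstacle: both the convergence of the full moment (Corollary \ref{convmoment}) and the uniform bound on the maximum eigenvalue (equation \eqref{momentvpmax}, itself a consequence of Proposition \ref{probatrou}) are in hand, so the only work is the trivial combinatorial bookkeeping of splitting off the top $\log N$ terms and observing that $\log N / N \to 0$.
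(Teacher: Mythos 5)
Your proof is correct and follows exactly the same route as the paper: reduce to showing $\frac{1}{N}\sum_{i=1}^{\log N}\EE_V^N(\lambda_i^*)^p \to 0$ via Corollary \ref{convmoment}, bound the sum by $(\log N)\,\EE_V^N|\lambda_1^*|^p$, and invoke the uniform moment bound \eqref{momentvpmax}. No meaningful difference from the paper's argument.
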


\begin{proof}
Due to Corollary \ref{convmoment}, we only need to prove 
$$ \frac{1}{N} \sum_{i=1}^{\log N}  \EE^N_{V}{\lambda_i^*}^p \underset{N\to +\infty}{\longrightarrow} 0.$$
From \eqref{momentvpmax} we have
\begin{equation} \label{momentborne} \sup_{N\geq N_0} \EE^N_{V} | \lambda_1^*|^p < +\infty,\end{equation}
with $N_0 \in \NN$. Thus for any $N\geq N_0$,
$$ \Big|\frac{1}{N} \sum_{i=1}^{\log N}  \EE^N_{V}{\lambda_i^*}^p \Big| \leq \frac{\log N}{N}  \sup_{N\geq N_0} \EE^N_{V}| \lambda_1^*|^p \underset{N\to +\infty}{\longrightarrow} 0.$$

\end{proof}
Due to the previous proposition, in order to prove  Proposition \ref{expoequiv}, it suffices to show that 
$$ \frac{1}{N}\sum_{i=\log N +1}^{N} {\lambda_i^*}^p $$
concentrates at a speed higher than $e^{-N^{1+\alpha/p}}$. To this end, we will use concentration inequalities for $\alpha$-convex measures from \cite{Bobkov}. This is the object of the following proposition.
\begin{Pro}\label{conckconv} 
Let $\alpha \geq 2$. Let $g : \RR^N \to \RR$ be a $1$-Lipschitz function with respect to $||\ ||_{\alpha}$.  Under the notation and assumption of Theorem \ref{ldp1}, we have for every $t>0$,
$$\PP_{V}^N\left( g- \EE_{V}^N g > t\right) \leq  \exp\left( -\frac{bNt^{\alpha} }{2^{{\alpha} -1}\alpha( \alpha-1)^{\alpha-1}} \right).$$

In particular, if $f: \RR\to \RR$ is a $1$-Lipschitz function, and $l,m \in \{1,...,N\}$, $ l\leq m$, then for any $t>0$,
$$\PP^N_{V} \left( \frac{1}{N}\sum_{i=l}^m f\left(\overline{\lambda_i}\right) -  \frac{1}{N}\EE^N_{V}\sum_{i=l}^m  f\left(\overline{\lambda_i}\right) > t \right) \leq\exp\left( -\frac{bN^{2}t^{\alpha}}{2^{\alpha-1}\alpha(\alpha-1)^{\alpha-1} } \right),$$
where $\overline{\lambda_1},...,\overline{\lambda_N}$ is the rearrangement of the $\lambda_i$'s in ascending order.

\end{Pro}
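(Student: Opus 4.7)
The plan is to exhibit the measure $\PP^N_{V,\beta}$ as $\alpha$-uniformly convex in the sense of \cite{Bobkov} and invoke the associated concentration inequality.

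I would first isolate the $\alpha$-convex structure. Up to the partition function, the push-forward of $\PP^N_{V,\beta}$ through the ordering map has density on the Weyl chamber $\Omega=\{\lambda_1\leq\dots\leq\lambda_N\}$ proportional to $e^{-U(\lambda)}$, with
$$U(\lambda) = N\sum_{i=1}^N V(\lambda_i) - \beta\sum_{i<j}\log(\lambda_j-\lambda_i).$$
The hypothesis $V(x)=b|x|^{\alpha}+w(x)$ with $w$ convex allows the decomposition $U=Nb\,\|\cdot\|_\alpha^{\alpha}+R$, where
$$R(\lambda)=N\sum_i w(\lambda_i)-\beta\sum_{i<j}\log(\lambda_j-\lambda_i).$$
On $\Omega$ the function $R$ is convex, since $w$ is convex and each $-\log(\lambda_j-\lambda_i)$ is convex on the half-space $\{\lambda_j>\lambda_i\}$ as the negative logarithm of a positive affine form. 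Hence $U-Nb\,\|\cdot\|_\alpha^\alpha$ is convex on the convex set $\Omega$, which is exactly the $\alpha$-uniform convexity condition at level $c=Nb$.

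The second step is to invoke Bobkov's concentration inequality \cite{Bobkov} for $\alpha$-uniformly convex measures on a convex domain, which for $\alpha\geq 2$ asserts that whenever $e^{-U}/Z$ on $K\subset\RR^N$ satisfies $U-c\|\cdot\|_\alpha^\alpha$ convex, one has
$$\mu(g-\EE_\mu g>t)\leq\exp\!\left(-\frac{c\,\alpha\,t^\alpha}{2^{\alpha-1}(\alpha-1)^{\alpha-1}}\right)$$
for every $\|\cdot\|_\alpha$-Lipschitz function $g$ of constant $1$. Taking $c=Nb$ and $K=\Omega$, and noting that the observables appearing in the statement are symmetric in the eigenvalues (so their distribution under $\PP^N_{V,\beta}$ coincides with their distribution under the push-forward on $\Omega$), this yields the first concentration bound.

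The third step reduces the second assertion to the first. Set $h(\lambda)=\sum_{i=l}^m f(\overline{\lambda_i})$. Since the ordering map is $1$-Lipschitz in every $\ell^q$ norm and $f$ is $1$-Lipschitz, H\"older's inequality applied to the sum of $m-l+1$ ones yields
$$|h(\lambda)-h(\mu)|\leq\sum_{i=l}^m|\overline{\lambda_i}-\overline{\mu_i}|\leq(m-l+1)^{1-1/\alpha}\|\overline{\lambda}-\overline{\mu}\|_\alpha\leq N^{1-1/\alpha}\|\lambda-\mu\|_\alpha.$$
Consequently $N^{1/\alpha-1}h$ is $1$-Lipschitz, and applying the first bound at the threshold $N^{1/\alpha}t$ produces the stated exponent $bN^2 t^\alpha/(2^{\alpha-1}\alpha(\alpha-1)^{\alpha-1})$.

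The main obstacle is the rigorous invocation of Bobkov's theorem: since the logarithmic repulsion is not globally defined and $U$ is convex only on each Weyl chamber, one must work on the convex domain $\Omega$ and exploit the permutation-invariance of $\PP^N_{V,\beta}$ together with the symmetry of the relevant observables. Recovering the precise constant $2^{\alpha-1}\alpha(\alpha-1)^{\alpha-1}$ requires the sharp form of Bobkov's inequality, which is obtained by optimising over the conjugate exponent $\alpha/(\alpha-1)$.
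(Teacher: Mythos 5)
Your proposal is correct in outline and follows essentially the same route as the paper: exhibit $\alpha$-uniform convexity in the potential $\Phi(\lambda)=N\sum V(\lambda_i)-\frac{\beta}{2}\sum_{i\neq j}\log|\lambda_i-\lambda_j|$, invoke Bobkov's concentration inequality from \cite{Bobkov}, and reduce the second estimate to the first through a Lipschitz bound on the rearranged sum. Two remarks.

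First, on the convexity step, you handle the domain issue more carefully than the paper does. The paper establishes the midpoint inequality
$$\Phi(\lambda)+\Phi(\mu)-2\Phi\left(\tfrac{\lambda+\mu}{2}\right)\geq \frac{bN}{2^{\alpha-1}}\|\lambda-\mu\|_\alpha^\alpha$$
for all $\lambda,\mu\in\RR^N$ on the basis of the Hessian of $-\sum_{i\neq j}\log|\lambda_i-\lambda_j|$ being positive semidefinite. But that Hessian computation only yields convexity on each convex component, i.e.\ on each Weyl chamber; the midpoint inequality as stated actually fails when the midpoint lands on the diagonal (for instance $\lambda=(1,2)$, $\mu=(2,1)$: both sides with $\Phi$ finite, but $\Phi(3/2,3/2)=+\infty$). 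Your push-forward to the Weyl chamber $\Omega$, where $-\log(\lambda_j-\lambda_i)$ is manifestly convex as the negative logarithm of a positive affine form, together with the permutation-invariance of $\PP^N_{V,\beta}$ and the symmetry of the observables, is the correct way to make this step rigorous. The only cost is that the first bound is established for symmetric (or $\Omega$-defined) $1$-Lipschitz $g$, which is all that is used in the second assertion and elsewhere in the paper.

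Second, the constant in your stated form of Bobkov's inequality is misplaced: with $c=Nb$, your display yields the exponent $-Nb\,\alpha\, t^\alpha/\bigl(2^{\alpha-1}(\alpha-1)^{\alpha-1}\bigr)$, which differs from the target $-Nb\, t^\alpha/\bigl(2^{\alpha-1}\alpha(\alpha-1)^{\alpha-1}\bigr)$ by a factor $\alpha^2$. The correct statement should have $\alpha$ in the denominator: from $U-c\|\cdot\|_\alpha^\alpha$ convex on $\Omega$, the Clarkson-type inequality gives the midpoint gap $\geq \frac{c}{2^{\alpha-1}}\|\lambda-\mu\|_\alpha^\alpha$, and Bobkov's bound then reads $\exp\bigl(-c t^\alpha/(2^{\alpha-1}\alpha(\alpha-1)^{\alpha-1})\bigr)$. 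This is a transcription slip rather than a conceptual error, but the constant must be fixed. The Lipschitz reduction in your third step is correct and marginally cleaner than the paper's, replacing the Cauchy--Schwarz-then-H\"older chain by a single H\"older step, producing the same $N^{-1/\alpha}$-Lipschitz constant for $\frac{1}{N}\sum_{i=l}^m f(\overline{\lambda_i})$.
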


\begin{proof}
Let 
$$ \forall \lambda \in \RR^N, \ \Phi(\lambda) = N\sum_{i=1}^N V(\lambda_i) -\frac{\beta}{2} \sum_{i\neq j} \log \left|\lambda_i - \lambda_j \right|.$$
We claim that $\Phi$ is $\alpha$-convex with respect to the norm $|| \ ||_{\alpha}$ on $\RR^N$, more precisely we will show that for all $\lambda,\mu \in \RR^N,$
\begin{equation} \label{alphaconv} \Phi(\lambda) + \Phi(\mu) -2\Phi\left( \frac{\lambda+\mu}{2} \right) \geq \frac{bN}{2^{\alpha-1}}  ||\lambda-\mu ||_{\alpha}^{\alpha}.\end{equation}
Note that for any $k,l \in \{1,...,N\}$,
$$ \mathrm{Hess}\left( -\beta \sum_{i\neq j} \log \left|\lambda_i - \lambda_j \right| \right)_{k,l} =
\begin{cases}
-\left( \lambda_k - \lambda_l\right)^{-2} & \text{ if } k\neq l,\\
\sum_{j\neq k} \left( \lambda_j - \lambda_k\right)^{-2} & \text{ if } k= l,
\end{cases}
$$
which defines a non-negative matrix since for any $x \in \RR^N$,
$$ \sum_{k\neq l } \left( \lambda_k - \lambda_l\right)^{-2} x_k^2 -  \sum_{k\neq l}  \left( \lambda_k - \lambda_l\right)^{-2}x_kx_l = \sum_{k<l} \left( \lambda_k - \lambda_l\right)^{-2} \left( x_k -x_l\right)^2\geq 0.$$
As by assumption
$$ \forall x \in \RR, \ V(x) = b|x|^{\alpha} + w(x),$$
with $w$ a convex function, we have, with the above observation, for any $\lambda, \mu \in \RR^N$,
$$ \Phi(\lambda) + \Phi(\mu) -2\Phi\left( \frac{\lambda+\mu}{2} \right) \geq bN\left( \sum_{i=1}^N \lambda_i^{\alpha} +\sum_{i=1}^N \mu_i^{\alpha} - 2 \sum_{i=1}^N \left( \frac{\lambda_i+\mu_i}{2} \right)^{\alpha} \right).$$
Since $\alpha\geq 2$, we have  for any $x,y\in \RR$,
$$ \frac{1}{2} x^{\alpha}  +\frac{1}{2}y^{\alpha}  \geq \left(\frac{x+y}{2} \right)^{\alpha}  + \left( \frac{x-y}{2} \right)^{\alpha}.$$
This yields the desired inequality \eqref{alphaconv}.

We know, by \cite[Corollary 4.1]{Bobkov}, that \eqref{alphaconv} entails that for any $1$-Lipschitz function with respect to $||\ ||_{\alpha}$, $g : \RR^N \to \RR$, and every $t>0$,
\begin{equation} \label{conc} \PP_{V}^N\left( g- \EE_{V}^N g> t\right) \leq  \exp\left( -\frac{bNt^{\alpha} }{2^{{\alpha} -1}\alpha( \alpha-1)^{\alpha-1}} \right).\end{equation}

Let now $f: \RR\to \RR$ be a $1$-Lipschitz function, and $k,l\in \{1,...,N\}$, $k\leq l$. We set
$$ \forall \lambda \in \RR^N, \ g(\lambda) = \frac{1}{N}\sum_{i=l}^m f(\overline{\lambda_i}),$$
For any $\lambda, \mu \in \RR^N$, we have by Cauchy-Schwartz inequality
$$
g(\lambda) - g(\mu) \leq  \frac{1}{N} \sum_{i=l}^m \left|\overline{\lambda_i} -\overline{ \mu_i}\right| \leq \frac{1}{N^{1/2}} \left(\sum_{i=l}^m |\overline{\lambda_i} - \overline{\mu_i}|^2 \right)^{1/2}\leq \frac{1}{N^{1/2}}\left( \sum_{i=1}^N \left|\lambda_i - \mu_i\right|^2\right)^{1/2},$$
where we used in the last inequality Hardy-Littlewood-Poly\'a rearrangement inequality.
Thus, by Hölder inequality
$$g(\lambda) - g(\mu) \leq N^{-\frac{1}{\alpha}} ||\lambda -\mu||_{\alpha}.$$
This shows that $g$ is $N^{-\frac{1}{\alpha}}$-Lipschitz with respect to the norm $|| \ ||_{\alpha}$. Applying Proposition \ref{conc}  to $g$ gives the second inequality in the statement.
\end{proof}

In the following proposition, we use the concentration inequalities of Proposition \ref{conckconv}, together with a truncation procedure and the deviations estimate of Proposition \ref{probatrou}, to prove that 
$$\frac{1}{N} \sum_{i=\log N+1}^{N}  {\lambda_i^*}^p ,$$
is exponentially equivalent to its expectation with respect to $\PP^N_{V}$. Combining this with the result of Proposition \ref{convmomenttronq}, 
$$\frac{1}{N} \sum_{i=\log N+1}^{N}\EE^N_{V}  {\lambda_i^*}^p \underset{N\to +\infty}{\longrightarrow} \langle\sigma_{sc}, x^p \rangle,$$
we will get Proposition \ref{expoequiv}.

\begin{Pro}\label{concent}
For any $t>0$,
$$ \limsup_{N\to +\infty} \frac{1}{N^{1+\alpha/p}} \log \PP^N_{V}\left( \bigg| \sum_{i=\log N+1}^{N}  {\lambda_i^*}^p - \sum_{i=\log N+1}^{N} \EE_{V}^N {\lambda_i^*}^p \bigg| \geq tN \right) = -\infty,$$
where $\lambda_1^*,...,\lambda_N^*$ denotes the rearrangement of the $\lambda_i$'s by decreasing absolute values.

\end{Pro}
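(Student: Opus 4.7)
The plan is to approximate $S_N := \sum_{i=\log N+1}^N (\lambda_i^*)^p$ by a globally Lipschitz function of the eigenvalues and combine the concentration inequality of Proposition \ref{conckconv} with the deviation estimate of Proposition \ref{probatrou}.

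Concretely, fix a truncation level $M=M_N$, set $\phi_M(x):=\operatorname{sgn}(x)^p(|x|\wedge M)^p$ (which is $pM^{p-1}$-Lipschitz), and define the Lipschitz surrogate $\tilde S_N := \sum_{j} \phi_M(\lambda_j) = \sum_{i} \phi_M(\overline\lambda_i)$. A direct book-keeping on the good event $E_M:=\{\lambda_{\log N+1}^{*}\le M\}$ shows $|S_N-\tilde S_N|\le 2(\log N)M^{p}$: at most $\log N$ eigenvalues with $|\lambda_j|>M$ contribute $\pm M^{p}$ each to $\tilde S_N$, and at most $\log N$ rank-intermediate terms in $S_N$ are bounded by $M^{p}$ each. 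The analogous bound on $|\EE^{N}_{V}S_N-\EE^{N}_{V}\tilde S_N|$ follows by splitting according to $E_M$ and using \eqref{momentvpmax} with Cauchy--Schwarz on the bad-event part. Proposition \ref{probatrou} at $k=\log N+1$ gives $\PP^{N}_{V}(E_M^c)\le \exp(-C(\log N)NV_M)$, while Proposition \ref{conckconv} applied to $\phi_M/(pM^{p-1})$ (now $1$-Lipschitz) with $l=1,\ m=N$ yields
$$\PP^{N}_{V}\bigl(|\tilde S_N-\EE^{N}_{V}\tilde S_N|>tN/2\bigr)\le 2\exp\!\Big(-\frac{b\,N^{2}(t/2)^{\alpha}}{C\,(pM^{p-1})^{\alpha}}\Big).$$
The triangle inequality then controls $\PP^{N}_{V}(|S_N-\EE^{N}_{V}S_N|>tN)$ by the sum of these two terms.

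\emph{The main obstacle} is the calibration of $M_N$: the exceptional-event bound is $\ll\exp(-RN^{1+\alpha/p})$ for every $R>0$ only when $M_N\gtrsim N^{1/p}$, whereas the concentration bound requires $M_N\ll N^{(p-\alpha)/(\alpha p(p-1))}$; for $\alpha\ge 2$ and $p>\alpha$ these two windows are disjoint, so a single-scale truncation cannot close the argument. To overcome this I would use a dyadic multi-scale refinement: decompose $\phi_{M_K}$ telescopically along a geometric sequence $M_0<M_1<\cdots<M_K$ with $M_K\asymp N^{1/p}$ and $K=O(\log N)$ layers; apply Proposition \ref{conckconv} separately to each layer $\phi_{M_{k+1}}-\phi_{M_k}$ (which is $\lesssim M_{k+1}^{p-1}$-Lipschitz and bounded by $M_{k+1}^{p}$) with an allocated deviation $t_k\asymp M_{k+1}^{p-1}/N^{(p-\alpha)/(\alpha p)}$; and union-bound over the $O(\log N)$ scales (an irrelevant logarithmic cost on the exponential scale). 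The tail $x^{p}-\phi_{M_K}(x)$ is supported on the event $\{\lambda_{\log N+1}^{*}>M_K\}$, again controlled by Proposition \ref{probatrou} with $k=\log N+1$. One then verifies that each layer's bound beats $\exp(-RN^{1+\alpha/p})$ for every $R>0$ while the total budget $\sum_k t_k\asymp N^{(\alpha-1)/\alpha}$ stays below $tN/2$ for any fixed $t>0$ once $N$ is large enough, which finally yields $\limsup N^{-(1+\alpha/p)}\log\PP^{N}_{V}(|S_N-\EE^{N}_{V}S_N|>tN)=-\infty$.
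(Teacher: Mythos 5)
Your diagnosis of the obstacle is exactly right: a single truncation level $M$ cannot simultaneously make the Lipschitz constant of the truncated moment small enough for Proposition~\ref{conckconv} and make the tail event $\{|\lambda^*_{\log N+1}|>M\}$ superexponentially negligible via Proposition~\ref{probatrou}, and the paper's bottom scale $M_0\asymp N^{(p-\alpha)/(\alpha p(p-1))}/(\log N)^{1/\alpha}$ sits precisely at the edge of the concentration-friendly window, as you identify. A multi-scale slicing is also what the paper uses (it covers $[M_0,\,e^{-1}N^{1/p}/(\log N)^{1/2\alpha}]$ with $O(\log\log N)$ non-geometric levels $M_l=N^{q_l}/(\log N)^{1/\alpha}$ chosen so that $\alpha q_l-pq_{l+1}=-(1-\alpha/p)$). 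But the tool you propose for the intermediate layers does not close. Applying Proposition~\ref{conckconv} to a layer of Lipschitz constant $L_k\asymp M_{k+1}^{p-1}$ with raw budget $t_k$ (so normalized budget $t_k/N$) gives a bound $\exp\bigl(-cbN^{2-\alpha}(t_k/L_k)^{\alpha}\bigr)$. With your calibration $t_k\asymp M_{k+1}^{p-1}N^{-(p-\alpha)/(\alpha p)}$ this becomes $\exp\bigl(-cbN^{1-\alpha+\alpha/p}\bigr)$; since $p\geq 3$ and $\alpha\geq 2$ force $\alpha>p/(p-1)$, the exponent $1-\alpha+\alpha/p$ is negative and the bound is trivial. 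Conversely, making each layer's bound beat $\exp(-RN^{1+\alpha/p})$ for all $R$ forces $t_k\gg M_{k+1}^{p-1}N^{1-1/\alpha+1/p}$, which at the top level $M_K\asymp N^{1/p}$ gives $t_K\gg N^{2-1/\alpha}\geq N^{3/2}\gg tN$. So Proposition~\ref{conckconv} is the wrong estimate above $M_0$, however you distribute the budgets.

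What actually handles the intermediate range in the paper is the eigenvalue-counting deviation of Proposition~\ref{probatrou}, not concentration. For each slice one bounds $\sum_i|\lambda_i^*|^p\,\Car_{M_l\leq|\lambda_i^*|\leq M_{l+1}}\leq M_{l+1}^p\,N\,L_N(I_{M_l}^c)$ and applies Proposition~\ref{probatrou}, yielding $\exp\bigl(-CtN^2M_l^{\alpha}/(2nM_{l+1}^p)\bigr)$; the identity $\alpha q_l-pq_{l+1}=-(1-\alpha/p)$ turns this into $\exp\bigl(-CtN^{1+\alpha/p}(\log N)^{p/\alpha-1}/(2c\log\log N)\bigr)$, which is superexponential at scale $N^{1+\alpha/p}$ because $p>\alpha$. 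The counting bound succeeds where concentration fails because its rate $\exp(-CkNM^{\alpha})$ \emph{improves} with the threshold $M$, whereas the concentration rate degrades through the Lipschitz constant $\asymp M^{p-1}$, which grows strictly faster than $M^{\alpha}$ since $p>\alpha$. To salvage your telescopic framing, you would have to replace Proposition~\ref{conckconv} by Proposition~\ref{probatrou} on each layer and re-derive the levels from the requirement that each slice exponent reach $N^{1+\alpha/p}$; this produces the paper's non-geometric sequence $q_l$ rather than a dyadic one, and only $O(\log\log N)$ levels rather than $O(\log N)$.
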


\begin{proof} To ease the notation, we set $k =\log N$. The first part of the argument consists in choosing the proper truncation level with respect to our exponential scale $N^{1+\alpha/p}$.  For any $M_0>0$, we denote by $F_{M_0}$ the function
$$ \forall x \in \RR,  \ F_{M_0} (x) = 
\begin{cases}
\sg(x)\left(|x|\wedge M_0\right)^p & \text{ if $p$ is odd,}\\
\left(|x|\wedge M_0\right)^p & \text{ if $p$ is even.}
\end{cases}.$$
Let 
$$M_0= \frac{N^{\frac{1}{\alpha(p-1)}(1-\frac{\alpha}{p})}}{(\log N)^{\frac{1}{\alpha}}}.$$
Note that,
\begin{align*}
\left| \frac{1}{N}\sum_{i=k+1}^{N} \EE_{V}^N\left( {\lambda_i^*}^p -  F_{M_0}\left( \lambda_i^*\right) \right)\right | & 
\leq \frac{1}{N}\sum_{i=k+1}^{N}   \EE_{V}^N \left|\lambda_i^* \right|^p\Car_{|\lambda_i^*|\geq M_0} \\
& \leq \frac{1}{NM_0}\sum_{i=k+1}^{N}   \EE_{V}^N \left|\lambda_i^* \right|^{p+1}\\
& \leq \frac{N-k}{NM_0}   \EE_{V}^N \left|\lambda_1^* \right|^{p+1}\underset{N\to +\infty}{\longrightarrow} 0,
\end{align*}
using \eqref{momentborne}, and the fact that as $p>\alpha$, $M_0 \to +\infty$.
Thus, it suffices to prove that for any $t>0$,
$$ \limsup_{N\to +\infty} \frac{1}{N^{1+\alpha/p}} \log \PP^N_{V}\Big( \Big| \sum_{i=k+1}^{N}  {\lambda_i^*}^p -\sum_{i=k+1}^{N}\EE_{V}^N F_{M_0}\left(\lambda_i^*\right) \Big| \geq tN \Big) = -\infty.$$
 Note that, 
$$ \sum_{i=k+1}^{N} F_{M_0}\left({\lambda_i^*}\right) = \sum_{j=(k-l)+1 }^{N-l} F_{M_0}\left( \overline{\lambda_i} \right),$$
where $ l = \Card\{ i\in \{1,...,k\} : \lambda_i^* > 0 \}$. Since the function $F_{M_0}$ is $pM_0^{p-1}$-Lipschitz, we have using a union bound and Proposition \ref{conckconv}, for any $t>0$,
\begin{equation*} \PP_{V}^N\Big( \Big| \sum_{i=k+1}^{N}F_{M_0} \left({\lambda_i^*}\right) - \sum_{i=k+1}^{N}\EE_{V}^N F_{M_0}\left(\lambda_i^*\right)\Big|>tN \Big) \leq 2k\exp\left( -\frac{1}{c_{\alpha}p^{\alpha}} t^{\alpha}N^{1+\frac{\alpha}{p}}\log N\right),\end{equation*}
where $c_{\alpha}$ is some constant depending on $\alpha$.
We can write,
\begin{align*}
\PP_{V}^N&\Big( \Big| \sum_{i=k+1}^{N} {\lambda_i^*}^p -\sum_{i=k+1}^{N} \EE^N_{V} F_{M_0}(  {\lambda_i^*} ) \Big| >Nt\Big) \\
 &\leq \PP_{V}^N\Big( \Big| \sum_{i=k+1}^{N} F_{M_0}\left(  {\lambda_i^*} \right) - \sum_{i=k+1}^{N}\EE^N_{V} F_{M_0}(  {\lambda_i^*}) \Big|> tN/2\Big) \\
&+\PP^N_{V} \Big( \sum_{i=k+1}^{N} | \lambda_i^* |^p\Car_{M_0\leq |\lambda_i^*|} > tN/2 \Big).
\end{align*}
We saw by the concentration inequality above, that the deviations of the truncated moments at the level $M_0$ around its mean are exponentially negligible at the scale $N^{1+\alpha/p}$. We need now to prove that the contributions in the deviations of the truncated moments of the $\lambda_i$'s above the level $M_0$ are also negligible. To do so, we will truncate one more time at a level $R$, chosen so that the deviation bound of Proposition \ref{probatrou} gives the right exponential estimate. 

From \eqref{assumpV}, we  have for $M$ large enough,
$$ \inf_{|x| \geq M} V(x) \geq \frac{b}{2}M^{\alpha}.$$
 Proposition \ref{probatrou} yields that there are some constants $M_0>0$, and $C>0$, depending on $V$ and $\beta$, such that for any $M>M_0$, and $k \in \{1,...,N\}$,
\begin{equation} \label{probadev}\PP_{V}^N\left( L_N(I_M^c)  \geq \frac{k}{N} \right) \leq \exp\left( -CkNM^{\alpha}\right).\end{equation}
Let $R = e^{-1}\frac{N^{1/p}}{(\log N)^{1/2\alpha}}$. We have, with the inequality above, for $N$ large enough,
\begin{align}
\PP^N_{V} \Big( \sum_{i=k+1}^{N} | \lambda_i^* |^p\Car_{M_0\leq \left|\lambda_i^* \right|} > tN/2 \Big)& \leq 
\PP_{V}^N\Big( \sum_{i=k+1}^{N} \left| \lambda_i^*\right|^p\Car_{M_0\leq \left|\lambda_i^* \right|\leq R} > tN/ 2 \Big) \nonumber\\
&+ \PP_{V}^N\left( L_N\left( I_{R}^c \right)\geq \frac{k}{N} \right), \label{div}
\end{align}
where $L_N$ denotes the empirical measure of the $\lambda_i$'s, and where
 $I_{R} = [-R, R]$. From \eqref{probadev}, we deduce that,
$$ \PP_{V}^N\Big( L( I_{R}^c)\geq  \frac{k}{N}\Big) \leq \exp\Big( -Ce^{-\alpha}\left(\log N\right)^{1/2} N^{1+\frac{\alpha}{p}}\Big).$$
We are reduced to show that the event $\{ \sum_{i=k+1}^{N} \left| \lambda_i^*\right|^p\Car_{M_0\leq \left|\lambda_i^* \right|\leq R} > tN/ 2\}$ is exponentially negligible at the scale $N^{1+\alpha/p}$. To this end, we will slice up the set $\{ \lambda \in \RR : M_0 \leq | \lambda | \leq R\}$ into $\log\log N$ small intervals $\{ \lambda \in \RR : M_l\leq |\lambda|\leq M_{l+1}\}$ for which we will use the deviation bound \eqref{probadev}. At each step, we choose the largest bound so that the event $\{\sum_{i=k+1}^{N} \left| \lambda_i^* \right|^p\Car_{M_l\leq \left|\lambda_i^*\right|\leq M_{l+1}} >\frac{tN}{2}\}$ is exponentially negligible by \eqref{probadev}. 
For any $n \geq 1$, we set 
 $$q_n = \left(1-\frac{\alpha}{p}\right)\left(\frac{1}{p} + \frac{\alpha}{p^2} +...+ \frac{\alpha^{n-1}}{p^{n}} + \frac{\alpha^{n-1}}{p^n(p-1)} \right),$$
and 
$$M_n = \frac{N^{q_n}}{(\log N)^{1/\alpha}},$$

Observe that $q_n \underset{n\to +\infty}{\longrightarrow} \frac{1}{p}$, and $$\frac{1}{p} - q_n = O\Big(\left(\frac{\alpha}{p}\right)^p\Big).$$
Let $ n = \lfloor c\log  \log N \rfloor$ with $c$ such that $q_n \geq \frac{1}{p} - \frac{1}{\log N}$. With this choice, we have 
$$M_n\geq R.$$
Thus, slicing up the set $\{\lambda \in \RR : M_0 \leq |\lambda| \leq R\}$, we get
\begin{align*}
\PP_{V}^N\left( \sum_{i=k+1}^{N} \left| \lambda_i^* \right|^p\Car_{M_0\leq \left|\lambda_i^*\right|\leq R} > \frac{tN}{2} \right)& \leq \PP^N_{V}\left( \sum_{i=k+1}^{N} \left| \lambda_i^*\right|^p\Car_{M_0\leq \left|\lambda_i^*\right|\leq M_n} > \frac{tN}{2}\right)\\
& \leq  \PP_{V}^N\left( \sum_{l=0}^{n-1}\sum_{i=k+1}^{N} \left| \lambda_i^* \right|^p\Car_{M_l\leq \left|\lambda_i^*\right|\leq M_{l+1}} >\frac{tN}{2} \right)\\
& \leq  \PP_{V}^N\left( \sum_{l=0}^{n-1}M_{l+1}^p L_N\left(  I_{M_l}^c \right)>\frac{t}{2}\right).
\end{align*}
Finally, a union bound gives
$$\PP_{V}^N\left( \sum_{i=k+1}^{N} \left| \lambda_i^* \right|^p\Car_{M_0\leq \left|\lambda_i^*\right|\leq R_N} > \frac{tN}{2} \right) \leq \sum_{l=0}^{n-1} \PP^N_{V}\left(  L_N\left(  I_{M_l}^c \right)>\frac{t}{2nM_{l+1}^p} \right).$$
Using \eqref{probadev}, we get $N$ large enough, and for all $0\leq l\leq n$,
$$ \PP_{V}^N\Big(  L(  I_{M_l}^c)>\frac{t}{2nM_{l+1}^p} \Big) \leq \exp\Big( -\frac{CtN^2 M_l^{\alpha}}{2nM_{l+1}^p} \Big) \leq  \exp\left( -\frac{CtN^{2+\alpha q_l-pq_{l+1}} \left(\log N\right)^{\frac{p}{\alpha}-1}}{2c \log \log N} \right) .$$
But 
\begin{align*}
 \alpha q_l -pq_{l+1}& = \left( 1-\frac{\alpha}{p} \right) \left( \frac{\alpha}{p} +\frac{\alpha^2}{p^2} +... + \frac{\alpha^l}{p^l} + \frac{ \alpha^l}{p^l\left(p-1\right)}\right) \\
&- \left( 1-\frac{\alpha}{p} \right) \left( 1+\frac{\alpha}{p} +\frac{\alpha^2}{p^2} +... + \frac{\alpha^l}{p^l} + \frac{\alpha^l}{p^l\left(p-1\right)}\right) \\
&= -\left(1-\frac{\alpha}{p}\right).
\end{align*}
Therefore,
$$ \PP_{V}^N\Big(  L(I_{M_l}^c)>\frac{t}{2nM_{l+1}^p} \Big) \leq \exp\left( -\frac{CtN^{1+\frac{\alpha}{p}} \left(\log N\right)^{\kappa}}{2c \log \log N} \right),$$
where $\kappa >0$ as $p> \alpha$.
We can conclude that, 
$$\PP_{V}^N\left( \sum_{i=k+1}^{N} \left| \lambda_i^* \right|^p\Car_{M_0\leq \left|\lambda_i^*\right|\leq R_N} >tN/2 \right)\leq c\log \log N \exp\left( -\frac{CtN^{1+\frac{\alpha}{p}} \left(\log N\right)^{\kappa}}{2c \log \log N} \right),$$
which ends the proof.
\end{proof}

\subsection{Large deviations principle for the truncated moments}
Since we know from Proposition \ref{expoequiv}, that $(m_{p,N})_{N\in \NN}$ is exponentially equivalent to $$\left( \langle\sigma_{\beta}^V,x^p\rangle + T_{p,N}\right)_{N\in \NN},$$ we only need to derive a large deviations principle for $(T_{p,N})_{N\in \NN}$, in order to get the large deviations principle of $(m_{p,N})_{N\in \NN}$ (see \cite{Zeitouni}[Theorem 4.2.13]).

\begin{Pro}\label{LDPtruncmoment}
Under the assumption of Theorem \ref{ldp1} and the notation of Proposition \ref{expoequiv}, the sequence $\left( T_{p,N}\right)_{N\in \NN}$ follows a LDP under the law $\PP^N_{V}$, with speed $N^{1+\frac{\alpha}{p}}$, and good rate function $I_p$. If $p$ is odd, $I_p$ is defined by
 $$\forall x \in \RR,\ I_p(x) = b|x|^{\alpha/p},$$
and if $p$ is even,
$$ \forall x \in \RR, \ I_p(x) = 
\begin{cases}
bx^{\alpha/p} & \text{ if } x\geq 0,\\
+\infty& \text{ otherwise.}
\end{cases}
$$

\end{Pro}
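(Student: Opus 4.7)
The plan is to prove that $T_{p,N}$ is exponentially equivalent, at speed $N^{1+\alpha/p}$, to an i.i.d.\ model, and then deduce the LDP for that model by elementary computations. Specifically, with $k=\log N$ and $\nu_N(d\lambda) = e^{-NV(\lambda)}d\lambda / \int e^{-NV}$, I would compare $T_{p,N}$ to $\widetilde T_{p,N} := \frac{1}{N}\sum_{i=1}^k X_i^p$, where $X_1,\dots,X_k$ are i.i.d.\ with law $\nu_N$. The intuition is that at the exponential scale $N^{1+\alpha/p}$, both the logarithmic interaction among eigenvalues and the partition function ratios become negligible, so the large deviations of the $k=\log N$ extreme eigenvalues are driven purely by the confining potential $V$.

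The first step is to express the joint density of $(\lambda_1^*,\dots,\lambda_k^*)$ under $\PP^N_V$ by peeling off the $k$ largest eigenvalues exactly as in the proof of Lemma \ref{encadrefuncpart}. This yields a formula involving the ratio $Z^{N-k}_{NV/(N-k)}/Z^N_V$, the interaction factor $\prod_{i<j\le k}|\lambda_i-\lambda_j|^\beta$ and the cross term $e^{\beta(N-k)\sum_i \langle L_{N-k},\log|\lambda_i-\cdot|\rangle}$ averaged under $\PP^{N-k}_{NV/(N-k)}$, under the constraint $\supp(L_{N-k})\subset[-|\lambda_k|,|\lambda_k|]$. By Lemma \ref{encadrefuncpart}, the partition-function ratio is of order $e^{O(N\log N)}=e^{o(N^{1+\alpha/p})}$. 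For the interaction terms, the bound $\log|\lambda_i-\mu|\le\log 2+\log(1+|\lambda_i|)$ valid whenever $|\mu|\le|\lambda_i|$ (which is enforced by the support constraint) upper-bounds the interaction by $\prod_i(1+|\lambda_i|)^{\beta N}$. Combined with the hypothesis $V(x)=b|x|^\alpha+w(x)$ with $w(x)=o(|x|^\alpha)$, the exponent $-NV(\lambda_i)+\beta N\log(1+|\lambda_i|)$ remains asymptotically equivalent to $-bN|\lambda_i|^\alpha$. Using Proposition \ref{probatrou}, one may also restrict to configurations where $\max_i|\lambda_i^*|\le N^{1/p+\varepsilon}$ at an exponentially negligible cost, which keeps the correction terms uniformly of order $e^{o(N^{1+\alpha/p})}$.

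Once reduced to the i.i.d.\ model, a direct Laplace computation with $\lambda = (Nx)^{1/p}$ gives $\nu_N(\{X_i^p/N\approx x\}) \asymp e^{-bN^{1+\alpha/p}|x|^{\alpha/p}(1+o(1))}$. The LDP for $\widetilde T_{p,N} = \frac{1}{N}\sum_{i=1}^k X_i^p$ at this speed is then driven by the single largest summand (an extreme-value regime, as in Nagaev's truncation): the upper bound follows by a union bound over which $X_i$ is large, losing only a factor $k=e^{o(N^{1+\alpha/p})}$, while the lower bound is obtained by placing one summand in a small interval around $(Nx)^{1/p}$. This gives rate function $b|x|^{\alpha/p}$ when $p$ is odd, and $b x^{\alpha/p}\mathbf{1}_{x\ge 0}+\infty\mathbf{1}_{x<0}$ when $p$ is even (since then each $X_i^p\ge 0$). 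The main obstacle is in the second step: one must carefully track the absolute constants in the interaction bounds so that the loss absorbed by the potential side does not degrade the constant $b$ in the leading order, and one must construct an explicit configuration for the lower bound (one large extremal eigenvalue, the remaining $N-1$ distributed typically under $\PP^{N-1}_{NV/(N-1)}$) whose probability matches the i.i.d.\ estimate up to $e^{o(N^{1+\alpha/p})}$.
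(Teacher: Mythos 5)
Your plan — peeling off the $k=\log N$ largest eigenvalues using the decomposition in Lemma \ref{encadrefuncpart}, bounding the interaction terms, reducing to an i.i.d.\ model with law $e^{-NV}dx/\int e^{-NV}$, and then proving the LDP for that i.i.d.\ model — matches the paper's proof in structure. The lower bound sketch (one extremal eigenvalue near $(Nx)^{1/p}$, the remaining $N-1$ confined to $[-M,M]$ under $\PP^{N-1}_{NV/(N-1)}$, with Proposition \ref{probatrou} ensuring the confinement has probability tending to one) is precisely what the paper does. Your interaction bound by $\prod_i(1+|\lambda_i^*|)^{\beta N}$ is a valid alternative to the paper's cruder bound $(2(NM)^{1/p})^{\beta Nk}$ after first restricting to $|\lambda_1^*|^p\le MN$ via exponential tightness; both produce $e^{o(N^{1+\alpha/p})}$ correction factors.

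The genuine gap is in your sketch of the i.i.d.\ upper bound. The claim that it ``follows by a union bound over which $X_i$ is large, losing only a factor $k$'' is not correct as stated. The pigeonhole--union bound $\PP\big(\sum_{i=1}^k X_i^p\ge Nx\big)\le k\,\PP\big(X_1^p\ge Nx/k\big)$ yields exponent $-bN^{1+\alpha/p}x^{\alpha/p}(\log N)^{-\alpha/p}$, a polylogarithmic loss, not $e^{o(N^{1+\alpha/p})}$. A one-big-jump decomposition at threshold $N(x-\delta)$ captures the leading term, but leaves the residual event $\{\sum X_i^p\ge Nx,\ \max_i X_i^p<N(x-\delta)\}$ whose probability must be shown negligible at scale $N^{1+\alpha/p}$; a naive truncated-sum concentration bound does not achieve this (the truncated variables are bounded by $N(x-\delta)$, so Bennett/Bernstein only gives polynomial decay). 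The paper sidesteps the Nagaev iteration entirely in Lemma \ref{ldp2}: by subadditivity of $u\mapsto u^{\alpha t/p}$ with $0<t<1$ (so $\alpha t/p<1$), the event $\{\sum X_i^p\ge Nx\}$ is contained in $\{\sum|X_i|^{\alpha t}\ge x^{\alpha t/p}N^{t(1+\alpha/p)}\}$, which is then handled by Chernoff at parameter $b$; the moment generating function $\EE\, e^{b\,|N^{1/\alpha}X_1|^{\alpha t}}$ is finite precisely because $t<1$, and the choice $t_N=1-(\log N)^{-2}$ makes every correction $e^{o(N^{1+\alpha/p})}$. You would need an argument of this kind (or a carefully quantified multi-level Nagaev iteration) to make the i.i.d.\ upper bound rigorous. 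You should also state and prove exponential tightness — the paper derives it from the $\alpha$-convexity concentration of Proposition \ref{conckconv} — since it is needed both to restrict the upper bound to $|\lambda_1^*|^p\le MN$ and to upgrade the weak LDP to a full LDP with a good rate function.
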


\begin{proof}
To ease the notation, we set in the following $k=\log N$.\\
\textbf{Exponential tightness.}
Let 
$$\forall \lambda \in \RR^N, \ g(\lambda) = \Big( \sum_{i=1}^k \left|\lambda_i^* \right|^p \Big)^{1/p}.$$
For $\lambda \in \RR^N$, we set $l= \Card\{ i \in \{1,...,k\} : \lambda_i^*>0 \}$. We can write $$g(\lambda) = \Big( \sum_{i=1}^{k-l} |\overline{\lambda_i}|^p + \sum_{i= N-l+1}^N \overline{\lambda_i}^p \Big)^{1/p},$$
where $\overline{\lambda_1},...,\overline{\lambda_N}$ is the rearrangement of the $\lambda_i$'s in ascending order. When $l$ is fixed, as $p\geq \alpha$, we see that $g$ is $1$-Lipschitz with the same argument as in the proof of Proposition \ref{conckconv}. Using a union bound, we get by Proposition \ref{conckconv}, for any $t>0$,
 \begin{equation*} \label{tensionconc} \PP^N_{V} \Big(\Big( \frac{1}{N} \sum_{i=1}^k \left|\lambda_i^* \right|^p \Big)^{1/p} - \EE^N_{V} \Big( \frac{1}{N} \sum_{i=1}^k \left|\lambda_i^* \right|^p \Big)^{1/p}  >t \Big) \leq k\exp\left( -\frac{bt^{\alpha} N^{1+\frac{\alpha}{p}}}{2^{\alpha-1}\alpha(\alpha-1)^{\alpha-1}} \right). \end{equation*}
Besides, by Jensen's inequality
$$
\EE^N_{V} \left( \frac{1}{N} \sum_{i=1}^k \left|\lambda_i^* \right|^p \right)^{1/p} \leq 
 \Big( \EE^N_{V}\frac{1}{N} \sum_{i=1}^k \left|\lambda_i^* \right|^p \Big)^{1/p}
 \leq \left( \frac{k}{N}\EE^N_{V}  \left|\lambda_1^* \right|^p  \right)^{1/p}.
$$
From \eqref{momentborne}, we deduce
$$\EE^N_{V} \left( \frac{k}{N} \sum_{i=1}^k \left|\lambda_i^* \right|^p \right)^{1/p} \underset{N\to +\infty}{\longrightarrow} 0.$$
From the above concentration inequality, we see that $\left( T_{p,N}\right)_{N\in \NN}$ is exponentially tight.

\textbf{Upper bound.} Observe that we only have to show that for any $x>0$,
\begin{equation} \label{inequpp}\limsup_{N\to +\infty } \frac{1}{N^{1+\alpha/p}} \log \PP^N_{V}\left( T_{p, N} \geq x\right)\leq -I_p(x).\end{equation}
In the case where $p$ is even, it is clear that \eqref{inequpp}, is sufficient. In the case $p$ is odd,
observe that $\tilde{V}(x) = V(-x)$ satisfies the assumptions of Theorem \ref{ldp1}.  Note also that for any $x>0$,
$$ \PP^N_{V}\left(  T_{p, N} \leq -x \right) = \PP^N_{\tilde{V}}\left(  T_{p, N} \geq x \right).$$
Therefore, if \eqref{inequpp} is proven, and if $p$ odd, then we have also for any $x>0$,
$$\limsup_{N\to +\infty } \frac{1}{N^{1+\alpha/p}} \log \PP^N_{V}\left( T_{p, N} \leq -x \right) \leq -I_p(-x).$$

We now prove \eqref{inequpp}. Since $(\frac{1}{N}\sum_{i=1}^k |\lambda_i^*|^p)_{N\in \NN}$ is exponentially tight, we only need to show that for any $M>x>0$, we have 
 $$\limsup_{N\to +\infty } \frac{1}{N^{1+\alpha/p}} \log \PP^N_{V}\left( T_{p, N} \geq x, |\lambda_1^*|^p\leq MN \right)\leq -I_p(x).$$
Let $M>x>0$. Since the event $\{ T_{p, N} \geq x, |\lambda_1^*|^p\leq MN  \}$ is invariant under permutation of the $\lambda_i$'s, we have
\begin{align*}
&\PP^N_{V}\left( T_{p, N} \geq x, |\lambda_1^*|^p\leq MN\right)\\
& = \frac{N !}{Z^N_{V}} \int_{\underset{|\lambda_N|\leq ...\leq |\lambda_1|\leq (MN)^{1/p}}{ \sum_{i=1}^{k}  \lambda_{i}^p \geq Nx}} e^{-N\sum_{i=1}^N V\left( \lambda_i\right)} \prod_{i<j} \left|\lambda_i-\lambda_j\right|^{\beta} \prod_{i=1}^N d \lambda_i.
\end{align*}
Bounding the interaction term involving the $k$ largest in absolute value $\lambda_i$'s, we get
\begin{align*}
&\PP^N_{V}\left( T_{p, N} \geq x, |\lambda_1^*|^p\leq MN \right)\\
& \leq \frac{N !}{\left(N-k\right) !} \frac{Z^{N-k}_{\frac{NV}{N-k}}}{Z^N_{V}}  \Big( 2\left( NM\right)^{1/p}\Big)^{\beta Nk}\int_{\underset{|\lambda_{k}|\leq ...\leq |\lambda_1|}{ \sum_{i=1}^{k}  \lambda_{i}^p \geq Nx}} e^{-N\sum_{i=1}^{k} V\left( \lambda_i\right)}   \prod_{i=1}^{k} d \lambda_{i}\\
& \leq  \binom{N}{k}  \frac{Z^{N-k}_{\frac{NV}{N-k}}}{Z^N_{V}} \left( 2\left( NM\right)^{1/p}\right)^{\beta Nk} \int_{\sum_{i=1}^{k}\lambda_i^p\geq Nx } e^{-N\sum_{i=1}^{k} V\left( \lambda_i\right) } \prod_{i=1}^{k} d\lambda_i\\
& = \binom{N}{k}  \frac{Z^{N-k}_{\frac{NV}{N-k}}}{Z^N_{V}} \left( 2\left( NM\right)^{1/p}\right)^{\beta Nk} \left( \int e^{-NV(\lambda)} d\lambda \right)^{k} \PP\Big( \frac{1}{N} \sum_{i=1}^{k} X_i^p \geq x \Big),
\end{align*}
where $X_1,...,X_{k}$ are independent and identically distributed random variables with law $d \mu_V = e^{-NV(x)}\frac{dx}{Z_N}$, where $Z_N = \int e^{-NV(x)} dx$.
As 
$$ \int e^{-NV(x)} dx =e^{O(N)}, \text{ and } \log  \frac{Z^{N-k}_{\frac{NV}{N-k}}}{Z^N_{V}} = O\left( N\log N \right),$$
from Lemma \ref{encadrefuncpart} (recall that $k=\log N$), it only remains to show that
$$ \limsup_{N\to +\infty} \frac{1}{N^{1+\alpha/p}} \log   \PP\Big( \frac{1}{N} \sum_{i=1}^{\log N} X_i^p \geq x \Big) \leq -I_p(x).$$
This is the object of the following lemma.
\begin{Lem} \label{ldp2}Let $(X_j)_{j\geq 1}$ be a sequence of independent and identically distributed random variables with law $d \mu_V = e^{-NV(x)}\frac{dx}{Z_N}$, where $Z_N = \int e^{-NV(x)} dx$, with $V$ as in \eqref{assumpV}. Let $p\in \NN$, $p>\alpha$.

For any $x>0$,
$$\limsup_{N\to +\infty} \frac{1}{N^{1+\alpha/p}} \log   \PP\Big( \frac{1}{N} \sum_{i=1}^{\log N} X_i^p \geq x \Big) \leq -I_p(x),$$
with $I_p$ as in Proposition \ref{LDPtruncmoment}.
\end{Lem}

\begin{proof}
Let $x>0$. Set $Y_i = N^{1/\alpha}X_i$ for all $i\in \{1,...,\log N\}$. We have
$$
\PP\Big(\frac{1}{N} \sum_{i=1}^{\log N} X_i^p \geq x \Big)  \leq \PP\Big( \sum_{i=1}^{\log N} Y_i^p \geq   x N^{1+\frac{p}{\alpha}} \Big)
 \leq  \PP\Big( \sum_{i=1}^{\log N} \left|Y_i\right|^p \geq   x N^{1+\frac{p}{\alpha}} \Big).$$
 Let $0<t<1$. As $\alpha \leq  p$, we have $\alpha t /p <1$. Using the fact that $(x+y)^{s} \leq x^{s} + y^{s}$, for any $s\in (0,1)$, $x,y \in \RR^+$,
\begin{align*}
\PP\Big(\frac{1}{N} \sum_{i=1}^{\log N} X_i^p \geq x \Big)
 &\leq \PP\Big( \Big(\sum_{i=1}^{\log N} \left|Y_i\right|^p \Big)^{\frac{\alpha t}{p}} \geq x^{\frac{\alpha t}{p}} N^{t\left(1+\frac{\alpha}{p} \right)} \Big)\\
&  \leq \PP\Big( \sum_{i=1}^{\log N} \left|Y_i\right|^{\alpha t} \geq x^{\frac{\alpha t}{p}} N^{t\left(1+\frac{\alpha}{p}\right)} \Big).
\end{align*}
By Chernoff's inequality we get, 
\begin{equation}
\PP\Big(\frac{1}{N} \sum_{i=1}^{\log N} X_i^p \geq x \Big)\leq  e^{-bx^{\frac{\alpha t}{p}} N^{t\left(1+\frac{\alpha}{p} \right)}} \left(\EE\left( e^{b|Y_1|^{\alpha t}}\right)\right)^{\log N}. \label{chernoff}\end{equation}
As for any $x\in \RR$, $V(x) = b|x|^{\alpha} + w(x)$,
$$\EE\left( e^{b|Y_1|^{\alpha t}}\right) = \frac{1}{Z_N'} \int e^{-b(|x|^{\alpha} -\left|x\right|^{\alpha t}) -Nw\left(\frac{x}{N^{1/\alpha}}\right)} dx,$$
with 
$$Z_N '= \int e^{-NV\left(\frac{x}{N^{1/\alpha}}\right) }dx.$$
On one hand,
$$\int e^{-b(|x|^{\alpha} -\left|x\right|^{\alpha t}) -Nw\left(\frac{x}{N^{1/\alpha}}\right)} dx \leq 2e^{N \inf w} \int_0^{+\infty} e^{-b(x^{\alpha} -x^{\alpha t}) }dx.$$
Note that as $w$ is convex, $ \inf w > -\infty$.
On the other hand, $Z_N'= e^{O(N)}$.
Therefore,
$$
\EE\left( e^{b|Y_1|^{\alpha t}}\right)  \leq  e^{ o\left(\frac{ N^{1+\alpha/p}}{ \log N}\right)} \int_0^{+\infty} e^{-b(x^{\alpha} -x^{\alpha t}) }dx.$$
As $x \mapsto x^{\alpha-1} - t x^{\alpha t-1}$ is non-decreasing on $[1,+\infty)$, we have,
\begin{align*}
 \int_0^{+\infty} e^{-b\left(x^{\alpha} -x^{\alpha t}\right) }dx& \leq e^b + \frac{1}{\alpha(1-t)} \int_1^{+\infty} \left( \alpha x^{\alpha-1} -\alpha t x^{\alpha t-1}\right)e^{-b\left(x^{\alpha}-x^{\alpha t}\right)}dx \\
& = e^b +\frac{1}{b \alpha(1-t)}.
\end{align*}
Take $t=t_N = 1-1/(\log N)^2$. Then,
$$\EE\left( e^{b|Y_1|^{\alpha t_N}}\right) =e^{ o\left(\frac{ N^{1+\alpha/p}}{ \log N}\right)}.$$
Together with the bound \eqref{chernoff}, we get
\begin{equation} \label{bornesup}\frac{1}{N^{1+\alpha/p}} \log \PP\left(\frac{1}{N} \sum_{i=1}^{\log N} X_i^p \geq x \right) \leq -bx^{\frac{\alpha t_N}{p}} N^{-(1-t_N)\left( 1+\frac{\alpha}{p}\right)} + o(1).\end{equation}
Taking the limsup as $N$ goes to $+\infty$ we get the claim.
\end{proof} 
\textbf{Lower bound.}
Let $x \in \RR $. We want to show that
\begin{equation} \label{lowerboundbeta} \liminf_{\delta \to 0} \liminf_{N\to +\infty} \frac{1}{N^{1+\alpha/p}} \log \PP^N_V\left( T_{p,N} \in \left(x-\delta, x+\delta\right) \right) \geq -I_p(x).\end{equation}
As $T_{p,N}$ converges to $0$ in almost surely, it is enough to prove this bound for $x\neq 0$.
With the same argument as for the upper bound, it suffices actually prove to the bound above only for $x>0$. 

Let $x>0$ and $\delta>0$. We have for $N$ large enough,
$$\PP_{V}^N\left( T_{p,N} \in \left( x- \delta, x +\delta \right) \right) \geq \PP_{V,\beta}^N\left( \frac{1}{N}{\lambda_1^*}^p \in \left( x-\delta/2, x+\delta/2\right),  \forall  i>1, |\lambda_i^*| \leq M\right),$$
with $M>0$.
By continuity, there is some $\veps>0$ such that 
$$\PP_{V}^N\left( T_{p,N} \in \left( x- \delta, x +\delta \right) \right) \geq \PP_{V}^N\left( \frac{1}{N^{1/p}} \lambda_1^* \in \left( x^{1/p}-\veps, x^{1/p}+\veps\right), \forall  i>1, |\lambda_i^*| \leq M\right).$$
 We have
\begin{align*}
\PP_{V}^N&\left(T_{p,N} \in \left( x- \delta, x +\delta \right) \right)\\
& \geq N !\frac{Z^{N-1}_{\frac{NV}{N-1}}}{Z^N_{V}}  \int_{\big|\frac{\lambda_1}{N^{1/p}}-x^{1/p} \big|<\veps} d\lambda_1 e^{-NV\left(\lambda_1\right)} \EE^{N-1}_ {\frac{NV}{N-1}} \left( \Car_{L_{N-1}\in \mathcal{E}_M} e^{\beta\left(N-1\right)\langle \log \left(\lambda_1-.\right),L_{N-1}\rangle} \right),
\end{align*}
where $ L_{N-1} = \frac{1}{N-1} \sum_{i=2}^N \delta_{\lambda_i}$,
and
$\mathcal{E}_M = \{ \mu \in \mathcal{M}_1\left(\RR\right) : \mathrm{supp}(\mu) \subset [-M, M] \}$, with $\mathcal{M}_1\left(\RR\right)$ the set of probability measures on $\RR$. Thus,
\begin{align*}
 \PP_{V}^N\left( T_{p,N} \in \left( x- \delta, x +\delta \right) \right)&\geq N ! \frac{Z^{N-1}_{\frac{NV}{N-1}}}{Z^N_{V}}    \int_{ \big|\frac{\lambda}{N^{1/p}}-x^{\frac{1}{p}}\big|<\veps} e^{-NV\left(\lambda\right)}  d\lambda\\
& \times e^{\beta\left(N-1\right) \log (N^{\frac{1}{p}}x^{\frac{1}{p}}-M-\veps)}\PP^{N-1}_{\frac{NV}{N-1}} \left( L_{N-1}\in  \mathcal{E}_M \right).
\end{align*}
As $w(y) =o_{\pm \infty}( |y|^{\alpha})$, we have 
\begin{align*}
  \int_{\big|\frac{\lambda}{N^{1/p}}-x^{\frac{1}{p}} \big|<\veps} e^{-NV\left(\lambda\right)}  d\lambda &\geq \int_{\big|\frac{\lambda}{N^{1/p}}-x^{\frac{1}{p}} \big|<\veps} e^{-(b+o(1))N\lambda^{\alpha}} dy\\
& = e^{-(b-o(\veps))N^{1+\frac{\alpha}{p}}x^{\frac{\alpha}{p}}} e^{o(N^{1+\frac{\alpha}{p}})}.
\end{align*}
Thus,
$$\PP_{V}^N\left( T_{p,N} \in \left( x- \delta, x +\delta \right) \right) \geq \frac{Z^{N-1}_{\frac{NV}{N-1}}}{Z^N_{V}}\PP^{N-1}_{\frac{NV}{N-1}} \left( L_{N-1}\in  \mathcal{E}_M \right) e^{-(b-o(\veps))N^{1+\frac{\alpha}{p}}x^{\frac{\alpha}{p}}} e^{o(N^{1+\frac{\alpha}{p}})}.$$
But from Lemma \ref{encadrefuncpart} we know that $\log \frac{Z^{N-1}_{\frac{NV}{N-1}}}{Z^N_{V}} = O\left( N\right)$.
Besides, by Proposition \ref{probatrou} (with $k=1$), we have for $M$ large enough,
$$\PP^{N-1}_{\frac{NV}{N-1}} \left( L_{N-1}\in  \mathcal{E}_M \right) \underset{N\to +\infty}{\longrightarrow} 1.$$
This concludes the proof of the lower bound \eqref{lowerboundbeta}.
\end{proof}
\newpage
\section{The case of Wigner matrices without Gaussian tails}\label{noGaussian}

We will give in this section a proof of Theorem \ref{LDPtraceWG}. The strategy followed is in the same spirit as the ones developed in \cite{Bordenave}, \cite{Groux} and \cite{LDPevNG}. We start by a heuristic argument to give a idea of the nature of the deviations of the moments, and of the speed of the deviations.

\subsection{Heuristics}
We show here how one can get the lower bound of the LDP without much effort. The main fact which makes the argument work is the following : if we add to a given Hermitian matrix a low rank Hermitian matrix with not too large operator norm, then the map $A \mapsto \tr_N A^p$ is almost linear. More precisely, we have the following lemma, whose proof is postpone at section \ref{third step}.

\begin{Lem}\label{decompotrace}
Let $p\geq 2$. Let $A$ and $C$ be two Hermitian matrices of size $N$. Assume that $C$ is of rank at most $r$. We have
$$ \left|\tr\left(A + C\right)^p - \tr A^p - \tr C^p \right| \leq 2^p r \max_{1\leq k\leq p-1} ||A||^k ||C||^{p-k},$$
where $|| \ ||$ denotes the operator norm.
\end{Lem}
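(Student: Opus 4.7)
The plan is to expand $(A+C)^p$ non-commutatively and then bound each mixed word in $A$ and $C$ individually, exploiting the low rank of $C$ through the trace norm.

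First I would write
\[
(A+C)^p = \sum_{W \in \{A,C\}^p} W,
\]
where the sum ranges over the $2^p$ words $W = X_1 X_2 \cdots X_p$ with each $X_i \in \{A,C\}$. Two of these words are $A^p$ and $C^p$, so upon taking the trace
\[
\tr(A+C)^p - \tr A^p - \tr C^p = \sum_{W \text{ mixed}} \tr W,
\]
where a word is \emph{mixed} if it contains at least one $A$ and at least one $C$. There are at most $2^p - 2 \leq 2^p$ such words, so it suffices to bound $|\tr W|$ uniformly by $r \max_{1\leq k\leq p-1}\|A\|^k\|C\|^{p-k}$ for any mixed word.

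Fix a mixed word $W$ with, say, $k$ occurrences of $A$ and $p-k$ occurrences of $C$ (so $1 \leq k \leq p-1$). Since $W$ contains at least one $C$, by cyclicity of the trace I can rewrite $W$ so that a $C$ appears in the first position: $\tr W = \tr(C W')$ where $W'$ is a product of the remaining $p-1$ factors. Then by the standard duality between operator norm and trace norm,
\[
|\tr(C W')| \leq \|C\|_1 \cdot \|W'\|,
\]
where $\|\cdot\|_1$ denotes the Schatten $1$-norm. The rank-$r$ hypothesis on $C$ gives $\|C\|_1 = \sum_{i} \sigma_i(C) \leq r\|C\|$, and sub-multiplicativity of the operator norm gives $\|W'\| \leq \|A\|^k \|C\|^{p-k-1}$. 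Combining,
\[
|\tr W| \leq r \|A\|^k \|C\|^{p-k} \leq r \max_{1 \leq k \leq p-1}\|A\|^k\|C\|^{p-k}.
\]

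Summing over the (at most $2^p$) mixed words yields the claimed inequality. There is no real obstacle; the only thing to be careful about is the cyclic reorganization and invoking $\|C\|_1 \leq r\|C\|$ from the rank bound, both of which are standard. Note that the constant $2^p$ is slack (one could sharpen to $2^p-2$, or even to the sum of binomial coefficients grouped by $k$), but this suffices for the stated bound.
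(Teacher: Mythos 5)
Your proof is correct and follows essentially the same route as the paper's: expand $(A+C)^p$ into the $2^p$ non-commutative words, isolate the two pure words, and for each mixed word use cyclicity to place a $C$ at one end and exploit the rank of $C$. The only cosmetic difference is that the paper bounds $|\tr(M^{(1)}\cdots M^{(p)})|$ by summing $\langle M^{(1)}\cdots M^{(p)}\eta_j,\eta_j\rangle$ over an eigenbasis of $C$ (only $r$ terms survive), whereas you invoke the trace-norm/operator-norm duality $|\tr(CW')|\le \|C\|_1\|W'\|$ together with $\|C\|_1\le r\|C\|$; these are the same estimate packaged differently.
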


  To make the argument clearer, let us assume $X$ has entries distributed according to the exponential law with parameter $b$. We restrict ourself to the case where $p$ is even. Let $\delta> 0$ and $\theta_N = (N\delta)^{1/p}$. Denoting $X_N^{(1,1)} = X_N-\frac{X_{1,1}}{\sqrt{N}}e_1 e_1^*$, where $e_1$ is the first coordinate vector of $\CC^N$, we have
\begin{align*}
\PP\left( \tr_NX_N^p \simeq C_{p/2} +\delta \right)
&\gtrsim \PP\Big(  \tr_N\left(X^{(1,1)}_N + \theta_N e_1 e_1^*\right)^p \simeq C_{p/2}+\delta, \ \frac{X_{1,1}}{\sqrt{N}} \simeq \theta_N \Big)\\
&\gtrsim \PP\Big(  \tr_N\left(X^{(1,1)}_N + \theta_N e_1 e_1^*\right)^p \simeq C_{p/2}+\delta, ||X^{(1,1)}_N|| \leq c\Big)\\
&\times \PP\Big( \frac{X_{1,1}}{\sqrt{N}} \simeq \theta_N \Big),
\end{align*}
with some $c>2$. As $||X_N^{(1,1)}-X_N|| \to 0$ in probability, and 
$$ ||X_N|| \underset{N\to +\infty}{\longrightarrow} 2,$$ in probability by \cite[Theorem 5.1]{Silverstein} (or \cite[Theorem 2.1.22, Exercise 2.1.27]{Guionnet}), we have
$$\PP\left( ||X_N^{(1,1)}|| \leq c \right) \underset{N\to +\infty}{\longrightarrow } 1.$$
By Lemma \ref{decompotrace}, we have
\begin{align*}
\PP\left( \tr_NX_N^p \simeq C_{p/2} +\delta \right)
&\gtrsim \PP\Big( \tr_N\left(X^{(1,1)}_N\right)^p + \frac{1}{N}\theta_N^p\simeq C_{p/2}+\delta,  ||X^{(1,1)}_N|| \leq c\Big)\\
&\times\PP\left( \frac{X_{1,1}}{\sqrt{N}} \simeq \theta_N \right)\\
 &\gtrsim\PP\Big( \tr_N\left(X^{(1,1)}_N\right)^p \simeq C_{p/2},  ||X^{(1,1)}_N|| \leq c\Big)\PP\Big( \frac{X_{1,1}}{\sqrt{N}} \simeq \theta_N \Big).
\end{align*}
Since $X_{1,1}$ has exponential law with parameter $1$, we have
$$ \PP\left(\frac{X_{1,1}}{N^{\frac{1}{2} + \frac{1}{p}} } \simeq \delta   \right) \simeq \exp\left(-bN^{\frac{1}{2} + \frac{1}{p}}\delta \right).$$
But $(\tr_N\left(X^{(1,1)}_N\right)^p)_{N\in \NN}$ converges to $C_{p/2}$ in probability, by Wigner's theorem (see  \cite[Lemmas 2.1.6, 2.1.7]{Guionnet}). Therefore,
$$\PP\left( \tr_NX_N^p \simeq x \right) \gtrsim \exp\left(-bN^{ \frac{1}{2}+\frac{1}{p}}\delta\right).$$
The same argument can also be carried out to get the second part of the lower bound, using the deformation
$$ \left(\begin{array}{cc}
0 & \theta_N\\
\theta_N & 0
\end{array}\right),$$
with $\theta_N = \left( \frac{\delta N}{2} \right)^{1/p}$.

\subsection{Outline of proof}\label{outlineWG}
 As suggested by the heuristic argument above, the deviations of $\tr_NX_N^p$ are due to finite rank deformations of $X_N$ with entries of order $N^{1/p}$. 
We decompose $X_N$ in the following way

\begin{equation}\label{cut} X_N = A +B^{\veps} + C^{\veps} + D^{\veps},\end{equation}
with 
$$ A_{i,j} = \frac{X_{i,j}}{\sqrt{N}}\Car_{ \left|X_{i,j}\right| \leq  (\log N)^d}, \quad \quad B^{\veps}_{i,j} =  \frac{X_{i,j}}{\sqrt{N}}\Car_{ (\log N)^d < \left|X_{i,j}\right|< \veps N^{\frac{1}{2}+\frac{1}{p}}},$$
$$ C^{\veps}_{i,j} = \frac{X_{i,j}}{\sqrt{N}}\Car_{\veps N^{\frac{1}{2}+\frac{1}{p}}\leq  \left|X_{i,j}\right|\leq \veps^{-1}N^{\frac{1}{2}+\frac{1}{p}}}, \quad \quad D^{\veps}_{i,j} =  \frac{X_{i,j}}{\sqrt{N}}\Car_{ \veps^{-1}N^{\frac{1}{2}+\frac{1}{p}}< \left|X_{i,j}\right| },$$
where where $d$ is taken such that $\alpha d >1$.

In a first phase, we will show that one can neglect in the deviations of $\tr_NX_N^p$ the contributions of the intermediate entries, that is $B^{\veps}$, and the large entries, that is $D^{\veps}$, so that $(\tr_N(A+C^{\veps})^p)_{N\in \NN,\veps>0}$ are exponentially good approximations for $(\tr_NX_N^p)_{N\in \NN}$. 

Then, due to concentration inequalities, we show that the conditional expectation given $C^{\veps}$, $\EE_{C^{\veps}}\tr_N(H+C^{\veps})^p$,  where $H$ is a copy of $A$ independent of $X$, are exponentially good approximations of $(\tr_NX_N^p)_{N\in \NN}$. From the choice of the decomposition \eqref{cut}, we deduce that $C^{\veps}$ has only a finite number of non-zero entries at the exponential scale $N^{1+\alpha/p}$. Thus, Lemma \ref{decompotrace} and  Wigner's theorem allow us to conclude that $(\EE_{C^{\veps}} \tr_N(H+C^{\veps})^p)_{N\in \NN}$ is exponentially equivalent to $(\langle \sigma_{sc}, x^p\rangle + \tr_N (C^{\veps})^p)_{N\in \NN}$. It only remains to show a large deviations principle $C^{\veps}$, and conclude by contraction principle, with an argument similar as in \cite{LDPevNG}. The use of the contraction principle is made possible by the fact that $C^{\veps}$ has a finite number of non-zero entries with exponentially large probability. 

\subsection{Concentration inequalities} \label{concentration inequalities}
In this section, we revisit a concentration inequality from \cite{Meckes} for the trace of powers of sum of a Hermitian matrix with bounded entries with a deterministic Hermitian matrix. This inequality will be crucial to get the exponential tightness and an exponential approximation of  $(\tr_N X_N^p)_{N\in \NN}$.

Unfortunately, we cannot directly use the concentration inequality of  \cite[Proposition 4]{Meckes}, because of the assumption made on the expectation of the entries. To make the strategy sketched in \ref{outlineWG} work, we need to prove a concentration inequality for 
$$ \tr_N\left( \frac{H}{\sqrt{N}}+\frac{C}{\sqrt{N}} \right)^p,$$
 where $H$ is a centered matrix with bounded entries, and where $C$ is a deterministic matrix whose entries are of order $N^{1/p+1/2}$. But then,
\begin{equation} \label{bornexpect} \tr \left( \frac{C}{\sqrt{N}}\right)^{2(p-1)} \leq r^{2(p-1)} N^{\frac{2(p-1)}{p}},\end{equation}
where $r$ is the number of non-zero entries of $C$, which is a bound too loose to use the concentration inequality of \cite[Proposition 4]{Meckes}. 

However, since we are considering normalized traces, we are looking at deviations of order $N$ of the traces, whereas in \cite{Meckes} the deviations considered were of order $1$. Thus, one can expect that there is some room left in the approach of Meckes and Szarek, to get a concentration inequality for $ \tr_N\left(H+C \right)^p$, with the bound \eqref{bornexpect}.
 
\begin{Pro}\label{conctr}Let $p\in \NN$, $p\geq 3$.
Let $H$ be a centered random Hermitian matrix such that $(H_{i,j})_{i\leq j}$ are independent and bounded by some $\kappa\geq 1$, and let $C$ be a deterministic Hermitian matrix such that $\tr ( \frac{C}{\sqrt{N}} )^{2(p-1)} \leq m N^{2-\frac{2}{p}}$, where $m\geq1$. There are some universal constants $c, c' >0$, such that for all $t \geq  c'(p  m^{p-1})^{p} N^{-\frac{1}{2} \left( 1+\frac{2}{p}\right)}$,
$$\PP\left( \left|\tr_N(H+C)^p - \EE\tr_N (H+C)^p \right|>tN^{p/2} \right ) \leq 8\exp\left(-\frac{N^{1+\frac{2}{p}}}{c \kappa^2}\min\Big\{\left( \frac{t}{p}\right)^{2/p} , \frac{t^2}{p^2m^{2(d-1)}}\Big\}\right).$$
Moreover,
$$\PP\left( \Big |\tr_N \left|H+C\right|^p - \EE\tr_N \left|H+C\right|^p \Big|>tN^{p/2} \right ) \leq 8\exp\left(-\frac{N^{1+\frac{2}{p}}}{c\kappa^2}\min\Big\{\left( \frac{t}{p}\right)^{2/p} , \frac{t^2}{m^{2(d-1)}}\Big\}\right).$$
\end{Pro}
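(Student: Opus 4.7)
The plan is to adapt the concentration strategy of Meckes-Szarek from \cite{Meckes} to accommodate the deterministic shift $C$ of large Schatten norm. The key point is that $F(H)=\tr(H+C)^p$ (and likewise $\tilde F(H)=\tr|H+C|^p$) is Lipschitz in Hilbert-Schmidt norm with a \emph{local} Lipschitz constant controlled by $p\,\|H+C\|_{2(p-1)}^{p-1}$; since this Schatten norm itself concentrates, a bootstrap produces concentration of $F$.

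\textbf{Step 1 (Local Lipschitz bound).} Writing $H_s=sH_1+(1-s)H_2$ and using
$$F(H_1)-F(H_2)=p\int_0^1 \tr\bigl((H_s+C)^{p-1}(H_1-H_2)\bigr)\,ds,$$
together with Hölder's inequality for Schatten norms $|\tr(AB)|\le \|A\|_2\|B\|_2$, we obtain
$$|F(H_1)-F(H_2)|\le p\sup_{s\in[0,1]}\bigl(\tr|H_s+C|^{2(p-1)}\bigr)^{1/2}\,\|H_1-H_2\|_2.$$
The identical bound holds for $\tilde F$ via the subgradient of the convex map $A\mapsto \|A\|_p^p$.

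\textbf{Step 2 (Concentration of the Schatten norm).} The function $G(H)=\|H+C\|_{2(p-1)}$ is convex and $1$-Lipschitz in the Hilbert-Schmidt norm (by Minkowski), so Talagrand's convex concentration inequality for independent bounded entries gives
$$\PP(G>\mathbf{M}G+u)\le 4\exp(-u^2/(c\kappa^2)).$$
The median is bounded by $\mathbf{M}G\le (\EE G^{2(p-1)})^{1/(2(p-1))}$: Minkowski in Schatten norms, together with the hypothesis $\tr(C/\sqrt N)^{2(p-1)}\le mN^{2-2/p}$ and the classical moment bound $\EE\tr|H|^{2(p-1)}\lesssim \kappa^{2(p-1)}N^p$ (from the Wigner genus expansion, since $H$ has bounded centered entries), yields $\mathbf{M}G\lesssim m^{1/(2(p-1))}N^{1/2+1/p}$ when $m\ge 1$.

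\textbf{Step 3 (Bootstrap).} On the good event $\{G\le \mathbf{M}G+u\}$, Step 1 shows that $F$ is Lipschitz in Hilbert-Schmidt norm with constant $L(u)=p(\mathbf{M}G+u)^{p-1}$. A Lipschitz extension of $F$ restricted to this event, combined with Talagrand's convex inequality once more (for $\tilde F$, and for even $p$ in the case of $F$; a bounded-differences/log-Sobolev argument covers the odd non-convex case with only universal-constant loss), yields
$$\PP(|F-\EE F|>s)\le 4\exp\bigl(-s^2/(c\kappa^2 L(u)^2)\bigr)+4\exp(-u^2/(c\kappa^2)).$$
Setting $s=tN^{p/2+1}$ and optimizing $u$ produces the two regimes: choosing $u\asymp \mathbf{M}G$ keeps the first term dominant and gives the Gaussian regime $t^2 N^{1+2/p}/(c\kappa^2 p^2 m^{2(p-1)})$, while choosing $u\asymp (s/p)^{1/p}$ balances the two exponents and produces the sub-Gaussian regime $(t/p)^{2/p}N^{1+2/p}/(c\kappa^2)$. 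The threshold $t\ge c'(pm^{p-1})^p N^{-(1+2/p)/2}$ is exactly the crossover where both regimes begin to yield nontrivial tail bounds.

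The main obstacle is the bootstrap step for odd $p$, where $F=\tr(H+C)^p$ fails to be convex and Talagrand's convex concentration cannot be invoked directly; a bounded-differences or entropy-method replacement costs only universal constants. Once the local Lipschitz estimate of Step 1 is in place, the rest of the argument is careful book-keeping of Lipschitz constants and moment bounds.
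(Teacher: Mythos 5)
Your overall strategy matches the paper's: both proofs adapt Meckes--Szarek by combining Talagrand's convex concentration for bounded independent entries with a truncation at a Schatten-$2(p-1)$ ball on which the trace map is Lipschitz, then control the exceedance probability of the Schatten norm and optimize over the truncation radius to obtain the two-regime bound.

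However, your Step 3 has a genuine gap for odd $p$. When $p$ is odd, $F(H)=\tr(H+C)^p$ is \emph{not} convex, so even after restricting to the good event and taking a Lipschitz extension, Talagrand's convex-distance concentration simply cannot be applied to $F$ — the Lipschitz extension of a non-convex function is not convex. The substitutes you invoke do not rescue this: bounded differences gives a variance proxy of order $N^2$ (number of independent entries) times the square of the per-entry oscillation, which scales like $N^2\cdot\kappa^2 p^2\,\lVert H+C\rVert_{2(p-1)}^{2(p-1)}$ and is vastly larger than the needed $N^{1+2/p}$; and a log-Sobolev inequality is nowhere among the hypotheses (only boundedness of the entries). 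The paper resolves this with a decomposition you are missing: it writes $F=F^+-F^-$ with $F^+(Y)=\tr Y_+^p$ and $F^-(Y)=\tr Y_-^p$, where $Y_\pm$ are the positive and negative parts. Each of $F^\pm$ equals $\tr g(Y)$ for a convex scalar function $g$, hence is \emph{convex} in $Y$; on the ball $K_a=\{\lVert Y\rVert_{2(p-1)}\le a\}$ each is $pa^{p-1}$-Lipschitz, and the Meckes--Szarek convex Lipschitz extension then lets Talagrand be applied to each piece. A union bound over the two pieces produces the factor $8$ in the final estimate. Without this decomposition your argument proves the second inequality (for $\tr|H+C|^p$, always convex) and the even-$p$ case of the first, but not the odd-$p$ case — which is precisely the hard part.
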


\begin{proof}We follow the same approach as in \cite[Proposition 4]{Meckes}, with some slight variations at times, but considering deviations of order $N^{1+p/2}$ of the trace of $(H+C)^p$. We will prove only the first inequality, the proof of the second inequality being exactly the same.

Without loss of generality, we can assume $\kappa =1$. Let $X= H+C$. For $\beta \in \{1,2\}$, we denote by $\mathcal{H}_N^{(\beta)}$ the set of symmetric matrices of size $N$, when $\beta=1$, and Hermitian matrices when $\beta =2$. Note that as $H$ has entries bounded by $1$, we know by \cite[Corollary 4.10]{Ledouxconcentration}, that for any convex and $1$-Lipschitz function $f : \mathcal{H}^{(\beta)}_N \to \RR$ with respect to the Hilbert-Schmidt norm, and all $t>0$,
$$ \PP\left( \left| f(X) - \MM f(X) \right|>t \right) \leq 4 e^{ -\frac{t^2}{4}},$$
where $\MM f(X)$ denotes the median of $f(X)$. 
Let $a>0$. Define
$$K_a = \left\{ Y\in \mathcal{H}^{(\beta)}_n : ||Y||_{2(p-1)} \leq a \right\},$$
where $||Y||_{q} = (\tr|Y|^q)^{1/q}$ for any matrix $Y$ and $q >0$.
Note that we can write
$$ F = F^+ - F^-,$$
with $F^+(Y ) = \tr Y^p_{+}$, and $F^-(Y) = \tr Y_{-}^p$ for any $Y\in  \mathcal{H}^{(\beta)}_N$, where for every $x\in \RR$, $ x_+$ and $x_-$ denote the positive and negative parts of $x$. The functions $F^+$ and $F^-$ are convex and $pa^{p-1}$-Lipschitz on $K_a$.  Let $F_a^+$, $F_a^-$ denote the convex extensions of $F^+_{|K_a}$ and $F^-_{|K_a}$ to $ \mathcal{H}^{(\beta)}_N$, which are $pa^{p-1}$-Lipschitz, as explained in \cite[Lemma 5]{Meckes}. Then, for all $t>0$, we have
$$ \PP\left( \left|F_a^{\sigma}(X) - \MM F^{\sigma}_a(X) \right|>tN^{1+p/2} \right) \leq 4 \exp\left( -\frac{t^2N^{p+2}}{4 p^2 a^{2(d-1)} }\right),$$
with $\sigma \in \{+,-\}$. 

Besides $Y\mapsto ||Y||_{2(d-1)}$ is convex and $1$-Lipschitz with respect to the Hilbert-Schmidt norm. From \cite[Theorem 8.6]{Massart}, we deduce that for any $t>0$,
$$\PP\left(  ||X||_{2(p-1)} - \EE ||X||_{2(p-1)}>t \right)\leq e^{ -\frac{t^2}{32}}.$$ 
But, 
$$\EE||X||_{2(p-1)} \leq \EE||H||_{2(p-1)} + ||C||_{2(p-1)} \leq  N^{\frac{1}{2(p-1)}}\EE||H||+mN^{\frac{1}{2}+\frac{1}{p}},$$
where $||\ ||$ denotes the operator norm, and where we used the fact that $m\geq1$.
But we know from \cite[p.6]{Meckes}, that there is some universal constant $c_1\geq 1$, such that 
$$  \EE||H||  \leq c_1 \sqrt{N}.$$
Thus, $\EE||X||_{2(p-1)} \leq 2m c_1N^{\frac{1}{2} + \frac{1}{p}}$.

Let now $b>0$, and $a= b N^{\frac{1}{2}+\frac{1}{p}}$. We have, for $b\geq4 mc_1$,
$$\PP\Big(  ||X||_{2(p-1)}  \geq a \Big)\leq \PP\Big(  ||X||_{2(p-1)} - \EE ||X||_{2(p-1)}\geq\frac{a}{2} \Big)\leq \exp\left( -\frac{b^2N^{1+\frac{2}{p}}}{128}\right).$$ 
Besides, with this choice of $a$, we have for all $t>0$, and all $\sigma \in \{+,-\}$,
$$ \PP\Big( \left|F_a^{\sigma}(X) - \MM F_a^{\sigma}(X) \right|>\frac{t}{2}N^{1+p/2} \Big) \leq 4 \exp\left( -\frac{t^2N^{1+2/p}}{16 p^2 b^{2(p-1)} }\right).$$ 
Thus,
\begin{align}
\PP\Big( \left|F^{\sigma}(X) - \MM F^{\sigma}_a(X) \right|>\frac{t}{2} N^{1+p/2} \Big) &\leq \PP\Big(\left|F_a^{\sigma}(X) - \MM F^{\sigma}_a(X) \right|> \frac{t}{2} N^{1+p/2}\Big)\nonumber \\
&+\PP\Big(||X||_{2(p-1)} \geq a \Big) \nonumber\\
&  \leq 4 \exp\left( -\frac{N^{1+2/p}}{128 } \min\Big\{ b^2 , \frac{t^2}{p^2b^{2(p-1)} }\Big\}\right)\nonumber.
\end{align}
As a consequence, for $b=4mc_1$, we can find a numerical constant $c_2\geq 1$, such that for $t = c_2pN^{-\frac{1}{2} \left( 1+\frac{2}{p}\right)}$, we have
$$\PP\Big( F^{\sigma}(X) -\MM F^{\sigma}_a(X) > tN^{1+p/2} \Big) < \frac{1}{2}.$$
We deduce that 
$$ \MM F^{\sigma}(X) \leq \MM F^{\sigma}_{a}(X) +c_2pN^{\frac{1}{2} +\frac{p}{2}-\frac{1}{p} }.$$
As $F^{\sigma}_a$ is non-decreasing with $a$, and $F^{\sigma}_a\leq F^{\sigma}$ for any $a>0$, we have for all $b\geq 4mc_1$,
$$ \MM F^{\sigma}(X) -c_2pN^{\frac{1}{2} +\frac{p}{2}-\frac{1}{p} } \leq \MM F^{\sigma}_{a}(X) \leq \MM F^{\sigma}(X).$$
Thus, for $t \geq 2c_2p N^{-\frac{1}{2} \left( 1+\frac{2}{p}\right)}$, and any $b\geq 4mc_1$, we deduce that
\begin{align*}\PP\left( \left|F^{\sigma}(X) - \MM F^{\sigma}(X) \right|>t N^{1+p/2} \right) &
 \leq 
\PP\Big( \left|F^{\sigma}(X) - \MM F^{\sigma}_a(X) \right|>\frac{t}{2} N^{1+p/2} \Big)\nonumber\\
& \leq 4 \exp\left( -\frac{N^{1+2/p}}{128 } \min\Big\{ b^2 , \frac{t^2}{p^2b^{2(p-1)} }\Big\}\right). \end{align*}
But one can check that,
$$
\max_{b\geq 4 mc_1} \min\left\{ b^2 , \frac{t^2}{p^2b^{2(p-1)}} \right\} 
= \min\left\{ \left(\frac{t}{p}\right)^{2/p} , \frac{t^2}{p^2(mc_1)^{2(p-1)}}\right\}.
$$
Optimizing in $b$ in the previous inequality, and setting $c_3 =128 c_1^{2(p-1)}$, we get 
$$\PP\Big( \left|F^{\sigma}(X) - \MM F^{\sigma}(X) \right|>t N^{1+p/2} \Big) \leq 4\exp\left(-\frac{N^{1+\frac{2}{p}}}{c_3}\min\Big\{\left( \frac{t}{p}\right)^{2/p} , \frac{t^2}{p^2m^{2(p-1)}}\Big\}\right).$$
To get the same inequality but with $\EE F^{\sigma}(X)$ instead of $\MM F^{\sigma}(X)$, we integrate by parts the inequality above, and we find that there is some constant $c_4>0$, such that
$$\left|\EE F^{\sigma}(X)-\MM F^{\sigma}(X)\right|\leq c_4m^{p-1}p N^{-\frac{1}{2}\left( 1+\frac{2}{p}\right)}.$$
At the price of taking $c_4$ larger, we can assume that $c_4\geq c_2$. Then, for every $t \geq 2c_4m^{p-1}p N^{-\frac{1}{2}\left( 1+\frac{2}{p}\right)}$, 
\begin{align*}
\PP\Big( \left|F^{\sigma}(X) - \EE F^{\sigma}(X) \right|>t N^{1+p/2} \Big) &\leq \PP\Big( \left|F^{\sigma}(X) - \MM F^{\sigma}(X) \right|>\frac{t }{2}N^{1+p/2} \Big)\\
& \leq 4\exp\left(-\frac{N^{1+\frac{2}{p}}}{4c_3}\min\Big\{\left(\frac{t}{p}\right)^{2/p} , \frac{t^2}{p^2m^{2(d-1)}}\Big\}\right).
\end{align*}
As $F=F^+-F^-$, we have for any $t \geq 2c_4 m^{p-1}pN^{-\frac{1}{2}\left( 1+\frac{2}{p}\right)}$,
$$\PP\left( \left|F(X) - \EE F(X) \right|>t N^{1+p/2} \right)\leq 8\exp\left(-\frac{N^{1+\frac{2}{p}}}{16c_3}\min\left\{\left(\frac{t}{p}\right)^{2/p} , \frac{t^2}{m^{2(d-1)}}\right\}\right).$$
Setting $c = 16c_3$, and $c' = 2c_4$, we get the claim.
\end{proof}
\subsection{Exponential tightness}
Throughout the rest of this section, we fix a constant $\gamma>0$, such that for $t$ large enough,
\begin{equation} \label{tail} \PP\left( \left|X_{1,1}\right| >t \right)\vee \PP\left( \left|X_{1,2}\right| >t \right)  \leq e^{-\gamma t^{\alpha}}.\end{equation}

In this section, we will show that the sequence $(\tr_NX_N^p)_{N\in \NN }$ is exponentially tight, namely, we have the following proposition.
\begin{Pro}[Exponential tightness]\label{tensionexpo}
$$\lim_{t\to +\infty} \bornesupalpha  \PP\left( \tr_N\left|X_N \right|^p>t \right) = -\infty.$$
\end{Pro}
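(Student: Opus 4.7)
The plan is to decompose $X_N$ according to \eqref{cut} and treat each piece with a different tool: the concentration inequality of Proposition \ref{conctr} for the bounded-entry part $A$, the tail bound \eqref{tail} for the large-entry part $D^{\veps}$, and a rank/slicing argument for the intermediate and moderately large entries $B^{\veps}+C^{\veps}$. By Minkowski's inequality for the Schatten $p$-norm,
$$(\tr_N|X_N|^p)^{1/p} \leq (\tr_N|A|^p)^{1/p} + (\tr_N|B^{\veps}+C^{\veps}|^p)^{1/p} + (\tr_N|D^{\veps}|^p)^{1/p},$$
so if $\tr_N|X_N|^p>t$ then one of the three terms exceeds $t/3^p$; it suffices to bound each of the three corresponding probabilities at the exponential scale $N^{\alpha(1/2+1/p)}$.

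For the very large entries, the assumption \eqref{tail} gives $\PP(D^{\veps}\neq 0)\leq N^2 e^{-\gamma\veps^{-\alpha}N^{\alpha(1/2+1/p)}}$, so at speed $N^{\alpha(1/2+1/p)}$ the rate is $\gamma\veps^{-\alpha}$, which can be made arbitrarily large by taking $\veps$ small. For the bounded piece $A$, whose rescaled entries $\sqrt{N}A_{ij}$ are bounded by $(\log N)^d$, the moments method shows that $\EE\tr_N|A|^p$ converges to the $p$-th moment of the semicircle law (hence is bounded), and Proposition \ref{conctr} applied to $\sqrt{N}(A-\EE A)$ with $\kappa=(\log N)^d$ yields deviations at speed $N^{1+2/p}/(\log N)^{2d}$, which is much faster than $N^{\alpha(1/2+1/p)}$ whenever $\alpha<2$.

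For the piece $B^{\veps}+C^{\veps}$, whose non-zero entries are all bounded by $\veps^{-1}N^{1/p}$, let $K$ denote the number of indices $(i,j)$ with $i\leq j$ and $|X_{ij}|>(\log N)^d$. On the event $\{K\leq k\}$ the matrix $B^{\veps}+C^{\veps}$ has rank at most $2k$ and Hilbert--Schmidt norm at most $\sqrt{2k}\cdot\veps^{-1}N^{1/p}$, hence $\tr|M|^p\leq \mathrm{rank}(M)\,\|M\|^p$ gives $\tr_N|B^{\veps}+C^{\veps}|^p\leq (2k)^{1+p/2}\veps^{-p}$, a constant independent of $N$. Thus if $\tr_N|B^{\veps}+C^{\veps}|^p>t/3^p$ then necessarily $K>k(t)$ for some $k(t)\to\infty$ as $t\to\infty$. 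To bound $\PP(K>k)$ at the right speed we slice the range $[(\log N)^d,\veps^{-1}N^{1/2+1/p}]$ of the amplitudes of the large entries into geometric scales in the spirit of Proposition \ref{concent}: on each slice $[\theta_l,\theta_{l+1}]$ the probability that more than a given number of entries fall there is controlled by \eqref{tail} at speed $\theta_l^{\alpha}$, and the worst slice, i.e.\ those entries close to $(\log N)^d$, is compensated by the fact that the corresponding $K$-values needed to produce a contribution of size $t$ in the trace grow polynomially with $t$.

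The main obstacle is the treatment of $B^{\veps}+C^{\veps}$: the truncation level $(\log N)^d$ is forced on us by the requirement that Proposition \ref{conctr} give a speed faster than $N^{\alpha(1/2+1/p)}$ for the bulk, yet this level is too low for the naive union-bound estimate $\PP(K\geq k)\leq (eN^{2}q_N/k)^{k}$ (with $q_N \leq e^{-\gamma(\log N)^{d\alpha}}$) to decay exponentially at speed $N^{\alpha(1/2+1/p)}$. Replacing this naive bound by the slicing procedure, in which one re-thresholds at true polynomial scales where \eqref{tail} yields the correct exponential cost, is the decisive ingredient. Once $\PP(K>k(t))$ is shown to decay at speed $N^{\alpha(1/2+1/p)}$ with rate going to $+\infty$ as $t\to\infty$, the three contributions assemble to give the stated exponential tightness.
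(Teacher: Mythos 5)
Your treatment of $A$ (via Proposition \ref{conctr} with $\kappa=(\log N)^d$) and of $D^{\veps}$ (via the tail bound \eqref{tail}) matches the paper. The gap is in your treatment of the middle piece. You merge $B^{\veps}$ and $C^{\veps}$ and reduce, via the bound $\tr_N|B^{\veps}+C^{\veps}|^p\leq (2k)^{1+p/2}\veps^{-p}$ on $\{K\leq k\}$, to controlling $\PP(K>k(t))$, where $K$ is the number of indices with $|X_{ij}|>(\log N)^d$. But this probability does \emph{not} go to $-\infty$ at speed $N^{\alpha(1/2+1/p)}$, even after slicing. For any fixed $k$ the most likely way to realise $K>k$ is for $k$ entries to sit just above $(\log N)^d$, which costs roughly $e^{-\gamma k (\log N)^{d\alpha}}$; since $(\log N)^{d\alpha}=o\left(N^{\alpha(1/2+1/p)}\right)$, one gets $\limsup_N N^{-\alpha(1/2+1/p)}\log\PP(K>k)=0$ for every fixed $k$, hence $\lim_{t\to\infty}$ of that limsup is $0$, not $-\infty$. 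You flag the weakness of the naive union bound yourself, but the re-slicing you invoke does not repair it: slicing the amplitude range gives control on the trace \emph{directly}, by balancing the tail cost of entries of size $\theta_l$ against their contribution to $\tr|\cdot|^p$; it gives no improvement on the event $\{K>k\}$, which only records the fixed threshold $(\log N)^d$ and whose cheapest realisation is entirely concentrated at the low end. The reduction from the trace event to the $K$-event loses exactly the information the slicing would need.

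The paper avoids this by \emph{not} merging $B^{\veps}$ and $C^{\veps}$. The count argument (Proposition \ref{nb non zero entries C}) is applied only to $C^{\veps}$, whose non-zero entries exceed $\veps N^{1/2+1/p}$; there each entry costs $\gtrsim\gamma\veps^{\alpha}N^{\alpha(1/2+1/p)}$, which is the correct scale, so the count has the required exponential decay. For $B^{\veps}$, the paper uses $(\tr|B^{\veps}|^p)^{2/p}\leq\tr(B^{\veps})^2$ to reduce to a sum of independent variables $|B^{\veps}_{ij}|^2$, then a Chernoff bound with parameter $\lambda\sim\veps^{\alpha-2}N^{-(2-\alpha)(1/2+1/p)}$, for which the upper truncation of $B^{\veps}$ at $\veps N^{1/2+1/p}$ makes the Laplace transform controllable and produces exactly speed $N^{\alpha(1/2+1/p)}$. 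This splitting is essential; your merged reduction to $\PP(K>k(t))$ is a dead end and would need to be replaced by an argument that, like the paper's, tracks the sizes of the entries (not just their number above $(\log N)^d$).
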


\begin{proof}[Proof of Proposition \ref{tensionexpo}]
Using the triangular inequality for the $p$-Schatten norm, we get for any $t>0$,
\begin{align}\label{decouptr}
\PP\left(\tr_N\left|X_N \right|^p>(4t)^{p} \right) &\leq \PP\left( \tr_N\left|A\right|^p>t^{p} \right) + \PP\left( \tr_N\left|B^{\veps} \right|^p>t^{p} \right)\nonumber\\
&+ \PP\left(\tr_N\left|C^{\veps}\right|^p>t^{p} \right)+\PP\left( \tr_N\left|D^{\veps} \right|^p>t^{p} \right).
\end{align}
This shows that it suffices to estimate at the exponential scale, the probability of each event $\{\tr_N\left|A\right|^p>t^{p}\}$, $\{\tr_N\left|B^{\veps} \right|^p>t^{p} \}$, $\{\tr_N\left|C^{\veps}\right|^p>t^{p}\}$, and finally $\{\tr_N\left|D^{\veps} \right|^p>t^{p} \}$.
As a consequence of the concentration inequality of Proposition \ref{conctr}, we have the following lemma.

\begin{Lem}\label{tensionexpoA}
$$\lim_{t \to +\infty}\bornesup \PP\left(\tr_N\left|A\right|^p >t \right) =-\infty,$$
where $A$ is as in $\eqref{cut}$.
\end{Lem}

\begin{proof}
Note that as $p\geq 2$,
$$\tr\Big( \EE A\Big)^{2(p-1)} \leq \left(\tr( \EE A )^2 \right)^{p-1}.$$
Since the entries of $X$ are centered, we get
$$\tr \Big( \EE A\Big)^2 = \frac{1}{N}\sum_{1\leq i, j\leq N} \EE |X_{i,j}|^2\Car_{|X_{i,j}| > (\log N)^d}.$$
Integrating by parts, we have
 $$\tr \Big( \EE A\Big)^2 = O\Big(N^2 e^{-\frac{\gamma}{2} (\log N)^{\alpha d}}\Big),$$
where $\gamma$ is as in \eqref{tail}. As $\alpha d> 1$,
\begin{equation}\label{momentA}\tr\Big(\EE A\Big)^{2(p-1)} =o (1).\end{equation}
We see that $A$ satisfies the assumptions of Proposition \ref{conctr} with some $m\geq1 $ and $\kappa = (\log N)^d$. We get for any $t>0$, and $N$ large enough,
$$\PP\left( \left|\tr_N \left|A\right|^p - \EE\tr_N \left|A\right|^p \right|>t\right ) \leq 8\exp\left(-\frac{N^{1+\frac{2}{p}} }{c p^2(\log N)^{2d} }\min\Big\{t^{2/p} , \frac{t^2}{m^{2(p-1)}}\Big\}\right),$$
which yields, as $\alpha <2$,
\begin{equation} \label{concAexpo}\bornexpoalpha \PP\left( \left|\tr_N \left|A\right|^p - \EE\tr_N \left|A\right|^p \right|>t\right ) = -\infty.\end{equation}
We know from \cite[Theorem 2.1.1, Lemma 2.1.6]{Guionnet}, that 
\begin{equation}\label{convmomentabs} \EE \tr_N |X_N|^p \underset{N\to +\infty}{\longrightarrow} \langle \sigma_{sc}, |x|^p\rangle,\end{equation}
where $\langle\sigma_{sc}, |x|^p\rangle = \int |x|^p d\sigma_{sc}(x)$.
Denoting $\mu_{X_N}$ and $\mu_A$ the spectral measures of $X_N$ and $A$ respectively, we have using the decreasing coupling and \cite[Theorem III 4.4]{Bhatia},
\begin{equation} \label{inegWp} W_p(\EE \mu_{X_N}, \EE \mu_A) \leq \Big(\EE \tr_N|X_N-A|^p\Big)^{1/p},\end{equation}
where $W_p$ denotes the $p$-Wasserstein distance. As a consequence of the polar decomposition, we can write $|X_N-A|^p = (X_N-A)^pU$, where $U$ is  a unitary matrix, so that
\begin{equation}\label{polar}
\EE \tr|X_N-A|^p\leq \frac{1}{N^{p/2}}\sum_{i_1,...,i_{p+1}}\EE \prod_{j=1}^p |X_{i_j,i_{j+1}}|\Car_{|X_{i_j,i_{j+1}}| \leq (\log N)^d},\end{equation}
Hölder inequality yields,
$$
\EE \tr|X_N-A|^p\leq N^{p/2+1} \max\Big( \EE|X_{1,1}|^p\Car_{|X_{1,1}|> (\log N)^p} , \EE|X_{1,2}|^p\Car_{|X_{1,2}|> (\log N)^p}\Big),$$
where we used the fact that the entries of $X$ are centered. 
Integrating by parts, we get
\begin{equation} \label{diffXA} \EE \tr|X_N-A|^p = O\Big(N^{p/2+1} e^{-\frac{\gamma}{2}(\log N)^{\alpha d} }\Big),\end{equation}
where $\gamma$ is as in \eqref{tail}.
As $\alpha d>1$, we deduce by \eqref{inegWp}, $W_p(\EE \mu_{X_N}, \EE \mu_A) =o(1)$, which yields
$$ \Big| \EE \tr_N |X_N|^{p} - \EE \tr_N |A|^{p} \Big| =  o(1).$$
We can conclude with \eqref{concAexpo} and \eqref{convmomentabs} that $(\tr_N |A|^p)_{N\in \NN}$ is exponentially tight.

\end{proof}

For the second event $\{\tr_N |B^{\veps}|^p >t^p \}$, we have the following lemma.
\begin{Lem}\label{tensionexpoB}
For any $\veps>0$, we have
$$\lim_{t \to +\infty} \bornesupalpha \PP\left(  \tr_N\left|B^{\veps}\right|^p >t \right) = -\infty.$$
\end{Lem}

\begin{proof}
Since $p\geq 2$, we have
$$ \left(\tr \left|B^{\veps}\right|^p \right)^{2/p}\leq \tr (B^{\veps})^2.$$
Thus,
$$ \PP\left( \tr |B^{\veps}|^p \geq t N \right) \leq \PP\left( \tr (B^{\veps})^2 \geq t^{2/p} N^{2/p} \right).$$
Chernoff's inequality yields for any $\lambda>0$,
$$ \PP\Big( \sum_{1\leq i\leq j\leq N}\big| B^{\veps}_{i,j}\big|^2 \geq \frac{t^{2/p } }{2} N^{2/p} \Big)  \leq e^{-\frac{\lambda}{2} t^{\frac{2}{p}} N^{\frac{2}{p}+1}} \prod_{1\leq i\leq j\leq N} \EE\left( e^{\lambda |X_{i,j}|^2 \Car_{(\log N)^d < |X_{i,j}|< \veps N^{\frac{1}{2} + \frac{1}{p}} } } \right).$$
Let $1\leq i \leq j \leq N$. Recall that for $\mu$ a probability measure on $\RR$ and $g\in C^1$, we have the following integration by parts formula :
$$ \int_a^b g(x) d\mu(x) = g(a)\mu\left[a, +\infty\right) - g(b) \mu\left(b, +\infty\right) + \int_a^b g'(x) \mu\left[x, +\infty\right)dx.$$
Thus, we get for $N$ large enough,
$$ \EE\left( e^{\lambda| X_{i,j}|^2 \Car_ {(\log N)^d < |X_{i,j}|< \veps N^{\frac{1}{2} + \frac{1}{p}}}}\right) \leq 1 + \int_{(\log N)^d}^{\veps N^{\frac{1}{2} + \frac{1}{p} }} 2\lambda x e^{f(x)} dx,$$
with $f(x) = \lambda x^2 -\gamma x^{\alpha}$, and $\gamma$ is as in \eqref{tail}. Let 
$$\lambda = \frac{\alpha\gamma}{2} \veps^{\alpha -2} N^{-(2-\alpha)\left( \frac{1}{2} + \frac{1}{p}\right)}.$$
With this choice of $\lambda$, one can easily check that $f$ is non-increasing on $[(\log N)^d, \veps N^{\frac{1}{2} + \frac{1}{p} }]$. Thus,
\begin{align*}
\EE\left( e^{\lambda \left|X_{i,j}\right|^2 \Car_ {(\log N)^d < \left|X_{i,j}\right|< \veps N^{\frac{1}{2} + \frac{1}{p}}}}\right) &\leq 1+2\lambda \veps^2 N^{1+\frac{2}{p}} e^{f\left((\log N)^d\right)}\\
&\leq 1 +\alpha\gamma\veps^{\alpha }N^{\alpha\left(\frac{1}{2}+ \frac{1}{p}\right)}e^{f\left((\log N)^d\right)}.
\end{align*}
But for $N$ large enough,
$$f( (\log N)^d) = \frac{\alpha \gamma}{2} \veps^{\alpha-2} N^{-(2-\alpha)\left( \frac{1}{2} + \frac{1}{p} \right) } (\log N)^{2d} - \gamma (\log N)^{\alpha d}\leq -\frac{\gamma}{2} \left( \log N\right)^{\alpha d}.$$
As $\alpha d >1$, we get for $N$ large enough,
$$ \EE\left( e^{\lambda \left|X_{i,j}\right|^2 \Car_ {(\log N)^d < \left|X_{i,j}\right|< \veps N^{\frac{1}{2} + \frac{1}{p}}}}\right) \leq 1 + e^{-\frac{\gamma}{4} \left( \log N \right)^{\alpha d}}\leq \exp\Big( e^{-\frac{\gamma}{4} \left( \log N\right)^{\alpha d}}\Big).$$
Then,
\begin{equation}\label{probatrB} \PP\left( \tr \left|B^{\veps}\right|^p \geq t N \right) \leq \exp\Big( -\frac{\alpha\gamma}{4}\veps^{\alpha-2} N^{\alpha\left(\frac{1}{2}+\frac{1}{p}\right)}t^{\frac{2}{p}} \Big) \exp\Big( N^2 e^{-\frac{\gamma}{2} ( \log N)^{\alpha d}}\Big).\end{equation}
Since $\alpha d>1$, we get
$$ \lim_{t \to +\infty} \bornesupalpha \PP\left( \tr \left|B^{\veps} \right|^p \geq t N \right) = -\infty.$$
\end{proof}

We now turn to the event $\{\tr_N |C^{\veps}|^p > t \}$. As a consequence of Bennett's inequality, we have the following lemma.
\begin{Lem}\label{tensionexpoC}
For any $\veps>0$, 
$$\lim_{t\to +\infty} \bornexpoalpha \PP\left( \tr_N \left|C^{\veps}\right|^p >t \right) = -\infty.$$
\end{Lem}
To prove this lemma, we will first show that at the exponential scale $C^{\veps}$ has a finite number of non-zero entries. 
\begin{Pro}\label{nb non zero entries C}
For all $\veps>0$,
$$ \lim_{r \to +\infty}\bornesupalpha	 \PP\left(\Card\left\{ (i,j) : C^{\veps}_{i,j} \neq 0 \right\} \geq r \right)= -\infty,$$
where $C^{\veps}$ is as in \eqref{cut}.
\end{Pro}
\begin{proof}Let $\veps>0$.
Note that 
$$  \PP\left(\Card\left\{ (i,j) : C^{\veps}_{i,j} \neq 0 \right\} \geq r\right)\leq \PP\left( \sum_{1\leq i\leq j\leq N} \Car_{|X_{i,j}| \geq \veps N^{\frac{1}{2} + \frac{1}{p}}} \geq \frac{r}{2} \right).$$
Let $p_{i,j} = \PP\left(|X_{i,j}|\geq \veps N^{\frac{1}{2} + \frac{1}{p}}\right)$, for $i,j \in \{1,2\}$. From \eqref{tail}, we have
$$ p_{1,1}\vee p_{1,2} = o\left( \frac{1}{N^2} \right).$$
Therefore, it is enough to show that
$$ \lim_{r \to +\infty}\bornesupalpha \PP\left(\sum_{1\leq i\leq j\leq N} \big(\Car_{|X_{i,j}|\geq \veps N^{\frac{1}{2}+\frac{1}{p}}} - p_{i,j}\big) \geq r \right) = -\infty.$$
By Bennett's inequality (see \cite[Theorem 2.9]{Massart}) we have,
$$\PP\left(\sum_{1\leq i\leq j\leq N} \big(\Car_{|X_{i,j}|\geq \veps N^{\frac{1}{2}+\frac{1}{p}}} - p_{i,j}\big) \geq r \right) \leq \exp\Big( -v h\left(\frac{r}{v}\right)\Big),$$
with $h(x) = (x+1)\log (x+1)-x$, and $v = \sum_{i\leq j} p_{i,j} $. From \eqref{tail}, we have for $N$ large enough, 
$$v \leq N^2 e^{-\gamma \veps^{\alpha} N^{\alpha\left(\frac{1}{2}+\frac{1}{p}\right)}}.$$  As $h(x) \underset{+\infty}{\sim} x \log (x)$, we get for $N$ large enough,
$$\PP\left(\sum_{1\leq i,j\leq N} \big(\Car_{|X_{i,j}|\geq \veps N^{\frac{1}{2}+\frac{1}{p}}} - p_{i,j} \big)\geq r \right) \leq \exp\big( -r\gamma\veps^{\alpha} N^{\alpha\left(\frac{1}{2}+\frac{1}{p}\right)} \big)\exp\Big( r\log\Big(\frac{r}{N^2}\Big)\Big),$$
which gives the claim.
\end{proof}
With this result on the number of non-zero entries of $C^{\veps}$, we will see that the matrix $\frac{1}{N}|C^{\veps}|^p$ has a finite number of non-zero entries of order $1$, and that it yields the exponential estimate claimed in Lemma \ref{tensionexpoC}. 
\begin{proof}[Proof of Lemma \ref{tensionexpoC}]
Using the polar decomposition as in \eqref{polar}, and bounding each coefficient of $C^{\veps}$ by $\veps^{-1} N^{1/p}$, we get,
$$
  \tr \left|C^{\veps}\right|^p  \leq  |\mathcal{I}^{\veps}|^{p}N\veps^{-p},$$
where $|\mathcal{I}^{\veps}|$ denotes the number of non-zero entries in $C^{\veps}$.
Due to Lemma \ref{nb non zero entries C}, we get,
$$ \lim_{t \to +\infty} \bornesupalpha \PP\left(  \tr_N \left|C^{\veps}\right|^p >t \right) = -\infty.$$
\end{proof}
At last, we prove the following exponential tightness for $\tr_N|D^{\veps}|^p$.
\begin{Lem}\label{tensionexpoD}It holds
$$\lim_{\veps \to 0}\limsup_{t \to +\infty}\bornesupalpha \PP\left( \tr_N\left|D^{\veps} \right|^p>t\right)  = -\infty,$$
with $D^{\veps}$ as in \eqref{cut}.
\end{Lem}
\begin{proof}
A union bound gives for $N$ large enough,
\begin{equation} \label{tensionD} \PP\left( D^{\veps}\neq 0 \right) \leq N^2\exp\Big( -\gamma \veps^{-\alpha}N^{\alpha (\frac{1}{2}+\frac{1}{p})}\Big),\end{equation}
with $\gamma$ as in \eqref{tail}.
\end{proof}

From \eqref{decouptr}, lemmas \ref{tensionexpoA}, \ref{tensionexpoB}, and \ref{tensionexpoC}, we get for any $\veps>0$,
\begin{align*}
\limsup_{t\to+\infty}&\bornesupalpha \PP\left( \tr_N \left| X_N\right|^p> t \right) \\
&\leq \limsup_{t\to+\infty}  \bornesupalpha \PP\left( \tr_N|D^{\veps}|^p >t \right).
\end{align*}
Taking the limsup as $\veps$ goes to $0$, we see that Lemma \ref{tensionexpoD} yields the exponential tightness claimed in Proposition \ref{tensionexpo}.
\end{proof}

\subsection{Exponential equivalences}

\subsection{First step}
We will prove in this section that we can ignore in the deviations of $\tr_N X_N^p$ the contributions of the large entries, namely those such that $|X_{i,j}| >  \veps^{-1} N^{\frac{1}{2} + \frac{1}{p}}$, and the contributions of the intermediate entries, that is $(\log N)^d < |X_{i,j}|<\veps N^{\frac{1}{2}+\frac{1}{p}}$. More precisely, we will prove the following exponential approximation.

\begin{Pro}\label{firststep}For any $t>0$, 
$$\lim_{\veps \to 0} \bornesupalpha \PP\left( \left|\tr_N X_N^p - \tr_N\left(A+C^{\veps}\right)^p \right| >t \right) = -\infty,$$
with $A$ and $C^{\veps}$ are as in \eqref{cut}. In other words, $(\tr_N(A+C^{\veps})^p)_{N\in \NN }$ are exponentially good approximations of $(\tr_N X_N^p)_{N\in\NN}$.
\end{Pro}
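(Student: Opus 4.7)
The plan is to show that, at the exponential scale $N^{\alpha(1/2+1/p)}$, the contribution of $B^{\veps}+D^{\veps}$ is negligible. I will split the event $\{|\tr_N X_N^p - \tr_N(A+C^{\veps})^p|>t\}$ according to whether $D^{\veps}$ vanishes or not. On $\{D^{\veps}\neq 0\}$, Lemma \ref{tensionexpoD} already gives a probability bounded by $N^2\exp(-\gamma\veps^{-\alpha}N^{\alpha(1/2+1/p)})$, whose $\limsup$ after multiplication by $N^{-\alpha(1/2+1/p)}$ tends to $-\infty$ as $\veps\to 0$. It therefore remains to control the event $\{|\tr_N X_N^p - \tr_N(A+C^{\veps})^p|>t, \ D^{\veps}=0\}$.

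On $\{D^{\veps}=0\}$ we have $X_N = (A+C^{\veps})+B^{\veps}$. Expanding $(Y+Z)^p$ with $Y=A+C^{\veps}$ and $Z=B^{\veps}$ and removing the $Y^p$ term produces at most $2^p$ monomials of the form $Y^{n_1}Z^{m_1}\cdots Y^{n_s}Z^{m_s}$, with $\sum n_i + \sum m_j = p$ and $k:=\sum m_j\geq 1$. Applying H\"older's inequality for Schatten norms (exactly as in the proof of Lemma \ref{lemconv}), each such term is bounded by $\|Y\|_p^{p-k}\|Z\|_p^k$, and after dividing by $N$ we obtain
\begin{equation*}
\bigl|\tr_N X_N^p - \tr_N(A+C^{\veps})^p\bigr| \ \leq\  \sum_{k=1}^{p}\binom{p}{k}\bigl(\tr_N|A+C^{\veps}|^p\bigr)^{(p-k)/p}\bigl(\tr_N|B^{\veps}|^p\bigr)^{k/p}.
\end{equation*}
Thus, for any $M,\delta>0$ such that $\sum_{k=1}^p \binom{p}{k} M^{(p-k)/p}\delta^{k/p}\leq t$, the event is included in $\{\tr_N|A+C^{\veps}|^p>M\}\cup\{\tr_N|B^{\veps}|^p>\delta\}\cup\{D^{\veps}\neq 0\}$.

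The first event is controlled uniformly in $\veps$ by the exponential tightness of Proposition \ref{tensionexpo}: using Minkowski's inequality $\|A+C^{\veps}\|_p\leq\|A\|_p+\|C^{\veps}\|_p$ combined with Lemmas \ref{tensionexpoA} and \ref{tensionexpoC}, one may choose $M$ large so that $\limsup_{N}N^{-\alpha(1/2+1/p)}\log\PP(\tr_N|A+C^{\veps}|^p>M)\leq -R$ for any prescribed $R$. For the second event, I will re-use the quantitative bound \eqref{probatrB} established inside the proof of Lemma \ref{tensionexpoB}, which is valid for every $t>0$ (not only $t\to\infty$) and yields
\begin{equation*}
\PP\bigl(\tr_N|B^{\veps}|^p\geq\delta\bigr)\ \leq\ \exp\Bigl(-\tfrac{\alpha\gamma}{4}\veps^{\alpha-2}\delta^{2/p}\,N^{\alpha(1/2+1/p)}\Bigr)\exp\Bigl(N^2 e^{-(\gamma/2)(\log N)^{\alpha d}}\Bigr).
\end{equation*}
Since $\alpha<2$, the factor $\veps^{\alpha-2}$ blows up as $\veps\to 0$, so for fixed $\delta$ the $\limsup$ in $N$ of the rescaled log-probability tends to $-\infty$ as $\veps\to 0$.

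Putting everything together: for $t$ fixed, I first choose $M$ so that $\tr_N|A+C^{\veps}|^p\leq M$ with arbitrarily good exponential probability; then $\delta=\delta(t,M)$ so that the H\"older bound above gives the desired $t$; then Lemmas \ref{tensionexpoB} and \ref{tensionexpoD} provide rates that diverge to $-\infty$ as $\veps\to 0$. The main conceptual point -- and the step I expect to be the most delicate to write cleanly -- is the H\"older-type expansion: one must check that every noncommutative monomial can be distributed among the Schatten exponents $p/n_i, p/m_j$ summing to $1$ (this relies on the identity $\|H^n\|_{p/n}=\|H\|_p^n$ for Hermitian $H$, valid through the spectral theorem), in order to turn algebraic control of $\tr_N(A+C^{\veps})^p$ into bounds involving only the $p$-Schatten norms of $Y$ and $Z$.
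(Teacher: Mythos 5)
Your overall plan is sound and the H\"older--Schatten expansion is a nice, more elementary substitute for the paper's argument (the paper instead works on the event $\{\tr_N|X_N|^p\leq\tau\}$, uses uniform continuity of the $p$-th-moment map for the Wasserstein metric to produce a modulus $h$, and reduces to controlling $\tr_N|B^\veps+D^\veps|^p>h(t)$). Your computation $|\tr_N X_N^p-\tr_N(A+C^\veps)^p|\leq\sum_{k\geq1}\binom{p}{k}(\tr_N|A+C^\veps|^p)^{(p-k)/p}(\tr_N|B^\veps|^p)^{k/p}$ on $\{D^\veps=0\}$ is correct (via $\|H^n\|_{p/n}=\|H\|_p^n$ and H\"older for Schatten norms, as in Lemma~\ref{lemconv}), and splitting off $\{D^\veps\neq0\}$ via \eqref{tensionD} is fine.

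However, there is a genuine gap in the way you control the first event $\{\tr_N|A+C^\veps|^p>M\}$. You claim this is "controlled uniformly in $\veps$'', citing Proposition~\ref{tensionexpo} but justifying it through Minkowski and Lemmas~\ref{tensionexpoA}--\ref{tensionexpoC}. Lemma~\ref{tensionexpoC} does \emph{not} yield a bound uniform in $\veps$: its rate comes from Proposition~\ref{nb non zero entries C}, whose decay constant $\gamma\veps^\alpha$ vanishes as $\veps\to0$, while the deterministic bound $\tr_N|C^\veps|^p\leq\Card\{(i,j):C^\veps_{i,j}\neq0\}^p\,\veps^{-p}$ forces the threshold on $\Card\{(i,j):C^\veps_{i,j}\neq0\}$ to shrink like $M^{1/p}\veps$. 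Chasing the constants shows one would be forced to take $M=M(\veps)\to\infty$ roughly like $\veps^{-p(\alpha+1)}$, which makes $\delta(\veps)\sim M(\veps)^{-(p-1)}$ collapse so fast that the rate $\veps^{\alpha-2}\delta(\veps)^{2/p}$ coming from \eqref{probatrB} tends to $0$, not to $+\infty$. So the scheme as written does not close.

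The fix is short and stays within your framework: on $\{D^\veps=0\}$ one has $A+C^\veps=X_N-B^\veps$, hence $\|A+C^\veps\|_p\leq\|X_N\|_p+\|B^\veps\|_p$ and
\begin{equation*}
\PP\bigl(\tr_N|A+C^\veps|^p>2^pM,\ D^\veps=0\bigr)\ \leq\ \PP\bigl(\tr_N|X_N|^p>M\bigr)+\PP\bigl(\tr_N|B^\veps|^p>M\bigr).
\end{equation*}
The first term is controlled by the exponential tightness of Proposition~\ref{tensionexpo}, which is $\veps$-independent, so $M$ can indeed be chosen once and for all given a target rate $-R$; the second term is again handled by \eqref{probatrB} and its rate diverges as $\veps\to0$. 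With this modification your proof goes through, and it is essentially what the paper does (the paper's restriction to $\{\tr_N|X_N|^p\leq\tau\}$ plays the role of your choice of $M$), with your H\"older bound replacing the Wasserstein/uniform-continuity step.
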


\begin{proof}Let $\tau>0$. Define the compact subset, 
$$ \mathcal{K}_{\tau} = \left\{ \mu \in \M_1(\RR) : \langle \mu, |x|^p\rangle \leq \tau\right\},$$
 where $\M_1(\RR)$ denotes the set of probability measures on $\RR$. 
As the function which associates to a probability measure $\mu$ on $\RR$, its $p^{\text{th}}$ moment, $\langle \mu,  x^p\rangle$, is continuous for the $p$-Wasserstein distance, we get that restricted to  $\mathcal{K}_{\tau} $, it is uniformly continuous. Applying this uniform continuity to  spectral measures of Hermitian matrices, using the fact that 
$$ W_p(\mu_A, \mu_B) \leq \Big( \tr_N|A-B|^p\Big)^{1/p},$$
for any two Hermitian matrices $A$ and $B$, with spectral measures $\mu_A$, $\mu_B$,
we get that there exists a non-negative function $h$ depending on $\tau$, satisfying $h(t) \to 0$ as $t\to 0$, such that for any $X,Y \in \mathcal{H}^{(\beta)}_N$, if
$$\tr_N |X|^p \leq \tau,  \text{ and }\left|\tr_N X^p  - \tr_N Y^p \right|> t,$$
for some $t>0$, then,
$$\tr_N\left|X-Y\right|^p>h(t).$$
But, from Proposition \ref{tensionexpo}, we know that $(\tr_N |X_N|^p)_{N\in \NN}$  is exponentially tight, therefore, it is enough to show that for any $\tau>0$,
$$\lim_{\veps \to 0}\limsup_{N\to +\infty} N^{-\alpha\left( \frac{1}{2} + \frac{1}{p} \right)}\log \PP\left( \left|\tr_N X_N^p -\tr_N (A+C^{\veps})^p \right| >t, \tr_N|X_N|^p \leq \tau \right) =-\infty.$$
Let $\tau>0$. With the previous observation, we get for any $t>0$,
$$\PP\left(\left|\tr_N X_N^p - \tr_N(A+C^{\veps})^p\right| >t,\tr_N|X_N|^p \leq \tau \right)\leq \PP\Big( \tr_N|B^{\veps}+D^{\veps}|^p > h(t) \Big).$$
By the triangular inequality for the $p$-Schatten norm, we get
\begin{align} \PP\Big(|\tr_N X_N^p -& \tr_N(A+C^{\veps})^p| >t,\tr_N|X_N|^p \leq \tau \Big)\nonumber\\
&\leq \PP\left(\tr_N|B^{\veps}|^p > \frac{h(t)}{2^p} \right)+\PP\left(\tr_N|D^{\veps}|^p > \frac{h(t)}{2^p} \right)\label{BD}.\end{align}
But, on one hand \eqref{probatrB} yields
$$\lim_{\veps\to 0} \bornexpoalpha  \PP\left(\tr_N|B^{\veps}|^p > \frac{h(t)}{2^p} \right) = -\infty,$$
and on the other hand, \eqref{tensionD} gives
$$\lim_{\veps\to 0} \bornexpoalpha \PP\left(\tr_N|D^{\veps}|^p > \frac{h(t)}{2^p} \right)=-\infty.$$
This concludes the proof of Proposition \ref{firststep}, taking the limsup as $N$ goes to $+\infty$ at the exponential scale, and then the limsup as $\veps$ goes to $0$ in \eqref{BD}.

\end{proof}

\subsection{Second step}
We show here that in the study of the deviations of $\tr_N(A+C^{\veps})^p$, we can replace $A$ by a matrix $H$ independent of $X$, and that $\tr_N(H+C^{\veps})^p$ is exponentially equivalent to its conditional expectation given the $\sigma$-algebra $\mathcal{F}$, generated by the $X_{i,j}$ such that $|X_{i,j}|>(\log N)^d$. More precisely, we will prove the following result.

\begin{Pro}\label{concentdecouplage}
Let $\mathcal{F}$ be the $\sigma$-algebra generated by the variables $X_{i,j}\Car_{|X_{i,j}|>\left(\log N\right)^d}$. Let $H$ be a random Hermitian matrix independent of $X$, such that $(H_{i,j})_{i\leq j}$ are independent, and for all $1\leq i\leq N$, $H_{i,i}$ has the same law as $X_{1,1}/\sqrt{N}$ conditioned on $\{|X_{1,1}|\leq (\log N)^d\}$, and for all $i<j$, $H_{i,j}$ has the same law as $X_{1,2}/\sqrt{N}$ conditioned on $\{|X_{1,2} |\leq (\log N)^d\}$. 

For any $t>0$,
$$\bornexpoalpha \PP\left( \left| \tr_N X_N^p - \EE_{\mathcal{F}}\tr_N\left(H+C^{\veps}\right)^p\right|>t \right) = -\infty,$$
where $\EE_{\mathcal{F}}$ denotes the conditional expectation given $\mathcal{F}$.
\end{Pro}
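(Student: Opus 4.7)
The plan is to chain three reductions, all at the target speed $N^{\alpha(1/2+1/p)}$. First, Proposition \ref{firststep} replaces $\tr_N X_N^p$ by $\tr_N(A+C^{\veps})^p$ (the limit $\veps\to 0$ is taken at the end). Second, a conditional application of Proposition \ref{conctr} concentrates both $\tr_N(A+C^{\veps})^p$ and $\tr_N(H+C^{\veps})^p$ around their respective $\mathcal{F}$-conditional means. Third, a combination of Lemma \ref{decompotrace} and Wigner's theorem shows that those two conditional means agree asymptotically.

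For the concentration step, I would restrict to the $\mathcal{F}$-measurable event $\Omega_r=\{|\mathcal{I}^{\veps}|\leq r\}$, whose complement has probability $\exp(-\omega(N^{\alpha(1/2+1/p)}))$ once $r$ is chosen large enough (Proposition \ref{nb non zero entries C}). On $\Omega_r$ the matrix $C^{\veps}$ has rank at most $2r$ and operator norm at most $r\veps^{-1}N^{1/p}$, so that $\tr(C^{\veps})^{2(p-1)}\leq 2r^{2p-1}\veps^{-2(p-1)}N^{2(p-1)/p}$. Conditional on $\mathcal{F}$, the matrix $\sqrt{N}A$ has independent entries bounded by $(\log N)^d$ and centered up to a super-polynomially small mean that is harmlessly absorbed into $C^{\veps}$; the same holds for $\sqrt{N}H$ by independence of $H$ from $\mathcal{F}$. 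Applying Proposition \ref{conctr} with $\kappa=(\log N)^d$, conditionally on $\mathcal{F}$, gives concentration of $\tr_N(A+C^{\veps})^p$ around $\EE_{\mathcal{F}}\tr_N(A+C^{\veps})^p$ and of $\tr_N(H+C^{\veps})^p$ around $\EE_{\mathcal{F}}\tr_N(H+C^{\veps})^p$ at speed $N^{1+2/p}/(\log N)^{2d}$, which dominates $N^{\alpha(1/2+1/p)}$ since $\alpha<2$.

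For the matching step, on the additional event that $\|A\|,\|H\|\leq c$ for some constant $c$, Lemma \ref{decompotrace} gives
$$\left|\tr_N(Y+C^{\veps})^p-\tr_N Y^p-\tr_N(C^{\veps})^p\right|\leq \frac{2^{p+1}r}{N}\max_{1\leq k\leq p-1}\|Y\|^k\|C^{\veps}\|^{p-k}=O_{r,\veps}(N^{-1/p})$$
for $Y\in\{A,H\}$. Wigner's theorem --- applied conditionally on $\mathcal{F}$ to $A$ (the $L=o(N)$ entries forced to zero at the ``large'' positions are negligible since $\alpha d>1$ ensures $\EE L=o(N)$ with overwhelming probability) and unconditionally to the independent $H$ --- yields $\EE_{\mathcal{F}}\tr_N A^p\to\langle\sigma_{sc},x^p\rangle$ and $\EE\tr_N H^p\to\langle\sigma_{sc},x^p\rangle$. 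Both conditional expectations $\EE_{\mathcal{F}}\tr_N(A+C^{\veps})^p$ and $\EE_{\mathcal{F}}\tr_N(H+C^{\veps})^p$ are therefore within $o(1)$ of the common quantity $\langle\sigma_{sc},x^p\rangle+\tr_N(C^{\veps})^p$, which closes the argument.

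The main technical obstacle lies in the hypothesis $\|A\|,\|H\|\leq c$ required in the matching step: when $\alpha(1/2+1/p)>1$ (possible for $\alpha$ close to $2$ and $p$ small), the standard Talagrand-type concentration of the spectral edge, at speed $\sim N/(\log N)^{2d}$, is too weak at the desired speed $N^{\alpha(1/2+1/p)}$. The remedy is to replace $\|Y\|$ by a Schatten $2(p-1)$-norm in Lemma \ref{decompotrace} and control $\tr_N|Y|^{2(p-1)}$ via Proposition \ref{conctr} applied with $C=0$, which concentrates at speed $N^{1+2/p}/(\log N)^{2d}$ dominating $N^{\alpha(1/2+1/p)}$.
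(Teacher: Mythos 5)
Your high-level plan matches the paper's: reduce to $\tr_N(A+C^{\veps})^p$ via Proposition \ref{firststep}, concentrate around the $\mathcal{F}$-conditional mean via Proposition \ref{conctr}, and then show the conditional means match. However, the matching step is where you have a genuine gap. You invoke ``Wigner's theorem applied conditionally on $\mathcal{F}$ to $A$'' to conclude $\EE_{\mathcal{F}}\tr_N A^p \to \langle\sigma_{sc},x^p\rangle$, but this is not a standard application of Wigner's theorem: conditionally on $\mathcal{F}$, the matrix $A$ has deterministic zeros at the random positions $I=\{(i,j):|X_{i,j}|>(\log N)^d\}$, so $\EE_{\mathcal{F}}\tr_N A^p$ is an $\mathcal{F}$-measurable random variable whose convergence must itself be established. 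The paper solves this by introducing the decoupled matrix $A'$ of \eqref{defA'}, which agrees with $A$ off $I$ and is filled with independent copies on $I$; since $A'$ is independent of $\mathcal{F}$ and has the same law as $H$, one gets $\EE_{\mathcal{F}}\tr_N(A'+C^{\veps})^p = \EE_{\mathcal{F}}\tr_N(H+C^{\veps})^p$ for free, and the remaining comparison between $\EE_{\mathcal{F}}\tr_N(A+C^{\veps})^p$ and $\EE_{\mathcal{F}}\tr_N(A'+C^{\veps})^p$ is controlled through the $W_p$-Wasserstein bound $W_p(\EE_{\mathcal{F}}\mu_{A+C^{\veps}},\EE_{\mathcal{F}}\mu_{A'+C^{\veps}})^p\leq\EE_{\mathcal{F}}\tr_N|A-A'|^p$ together with Lemma \ref{control nombre zero dans A} on $|I|$. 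This coupling is the crux of the proof and is absent from your proposal; without it you have no quantitative handle on $\EE_{\mathcal{F}}\tr_N A^p$.

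The ``main technical obstacle'' you raise about pointwise bounds on $\|A\|,\|H\|$ is essentially self-imposed, and the proposed Schatten-norm modification of Lemma \ref{decompotrace} is an unnecessary detour. The terms that Lemma \ref{decompotrace} produces enter only through the conditional expectation $\EE_{\mathcal{F}}$. Since $H$ is independent of $\mathcal{F}$ and $C^{\veps}$, $|\mathcal{I}_{\veps}|$ are $\mathcal{F}$-measurable, the bound becomes
\begin{equation*}
\left|\EE_{\mathcal{F}}\tr_N(H+C^{\veps})^p - \EE\tr_N H^p - \tr_N(C^{\veps})^p\right| \leq \frac{2^p}{N}\,|\mathcal{I}_{\veps}|\,\max_{1\leq k\leq p-1}\left\{\|C^{\veps}\|^{p-k}\,\EE\|H\|^k\right\},
\end{equation*}
and one only needs $\EE\|H\|^k$ uniformly bounded (which the paper checks by comparing $\|H\|$ to $\|X_N\|$), not a high-probability event $\{\|H\|\leq c\}$. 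Indeed, conditioning on $\{\|H\|\leq c\}$ doesn't even make sense here since $\EE_{\mathcal{F}}\tr_N(H+C^{\veps})^p$ already integrates over $H$. So the concern about the LDP speed $N^{\alpha(1/2+1/p)}$ being too fast for the Talagrand-type operator-norm concentration simply does not arise. If you want to complete your version, replace the ``conditional Wigner'' step by the $A'$ coupling and the Wasserstein estimate, and drop the pointwise norm events in favor of moment bounds inside $\EE_{\mathcal{F}}$.
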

\begin{proof}
By Proposition \ref{firststep}, we know that $(\tr_N(A+C^{\veps})^p)_{N\in \NN, \veps>0}$ are exponentially good approximations of $(\tr_N X_N^p)_{N\in\NN}$, therefore it is enough to show that for all $\veps>0$, and $t>0$,
$$\bornexpoalpha \PP\left( \left|\tr_N\left(A+C^{\veps}\right)^p - \EE_{\mathcal{F}}\tr_N\left(H+C^{\veps}\right)^p \right|>t \right) = -\infty.$$
From Proposition \ref{tensionexpoC}, we see that is actually sufficient to show that for any $r\in \NN$, 
 $$\bornexpoalpha \PP\left( \left|\tr_N\left(A+C^{\veps}\right)^p - \EE_{\mathcal{F}}\tr_N\left(H+C^{\veps}\right)^p \right|>t, |\mathcal{I}_{\veps}|\leq r \right) = -\infty,$$
where
$$ \mathcal{I}_{\veps}=\Big\{ (i,j)\in\{1,...,N\}\times \{1,...,N\} : C^{\veps}_{i,j} \neq 0\Big\}.$$
Note that $C^{\veps}$ is $\mathcal{F}$-measurable, and given $\mathcal{F}$, $A$ has independent up-diagonal entries bounded by $(\log N)^d/\sqrt{N}$. Moreover, using the triangle inequality for the $2(p-1)$-Schatten norm, we get
$$\tr (\EE A +C^{\veps})^{2(p-1)} \leq 2^{2(p-1)}\max \Big( \tr( \EE A )^{2(p-1)}, \tr( C^{\veps})^{2(p-1)}\Big).$$
On one hand, we have, expanding the trace and bounding each entry of $C^{\veps}$ by $\veps^{-1}N^{1/p}$,
$$\tr(C^{\veps})^{2(p-1)} \leq |\mathcal{I}_{\veps}|^{2(p-1)} \veps^{-2(p-1)} N^{2-\frac{2}{p}},$$
and on the other hand we have from \eqref{momentA} that $\tr(\EE A)^{2(p-1)} =o(1)$.
Therefore, we can apply the result of Proposition \ref{conctr} for the trace of $(A+C^{\veps})^p$ under the conditional probability given $\mathcal{F}$. As $\alpha<2$, we get that for any $t>0$, and $r\in \NN$,
$$\bornexpoalpha \PP\left( \left|\tr_N\left(A+C^{\veps}\right)^p - \EE_{\mathcal{F}}\tr_N\left(A+C^{\veps}\right)^p\right|>t, |\mathcal{I}_{\veps}|\leq r \right) = -\infty.$$
We will use the same decoupling argument as in \cite{Bordenave}, to remove the dependency between $A$ and $C^{\veps}$.
Let $I = \left\{ (i,j) : |X_{i,j}|> (\log N)^d \right\}$. Define $A'$ the $N\times N$ matrix with $(i,j)$-entry
\begin{equation}\label{defA'} A'_{i,j} = A_{i,j}\Car_{(i,j)\notin I} + H_{i,j}\Car_{(i,j)\in I}.\end{equation}
Note that $A'$ and $H$ are both independent of $\mathcal{F}$ and have the same law. Therefore, 
$$\EE_{\mathcal{F}}\tr_N\left(A' +C^{\veps}\right)^p = \EE_{\mathcal{F}}\tr_N\left(H +C^{\veps}\right)^p.$$
Due to the triangular inequality and Lemma \ref{tensionexpoC}, it only remains to prove that for any $t>0$, and any $\tau>0$,
 $$\bornexpoalpha \PP\left( \left|\EE_{\mathcal{F}}\tr_N\left(A+C^{\veps}\right)^p - \EE_{\mathcal{F}}\tr_N\left(A'+C^{\veps}\right)^p \right|>t, \tr_N|C^{\veps}|^p \leq \tau \right) = -\infty.$$
But, using again the triangular inequality for the $p$-Schatten norm, we get
$$\EE_{\mathcal{F}} \tr_N|A'+C^{\veps}|^p \leq 2^p\max\left( \EE \tr_N|H|^p, \tr_N|C^{\veps}|^p \right).$$
With the same argument as in the proof of Lemma \ref{tensionexpoA} we have
$$ \EE \tr_N|H|^p \underset{N\to +\infty}{\longrightarrow } \langle \sigma_{sc}, |x|^p \rangle.$$
Arguing as in the proof of Lemma \ref{tensionexpoA}, we see that it is sufficient to show that for any $t>0$,
$$\bornexpoalpha \PP\left( W_p(\EE_{\mathcal{F}}\mu_{A+C^{\veps}}, \EE_{\mathcal{F}} \mu_{A'+C^{\veps}})>t \right) = -\infty,$$
where $\mu_{A+C^{\veps}}$ and $\mu_{A'+C^{\veps}}$ denote the spectral measures of $A+C^{\veps}$ and $A'+C^{\veps}$. But,
$$ W_p(\EE_{\mathcal{F}}\mu_{A+C^{\veps}}, \EE_{\mathcal{F}} \mu_{A'+C^{\veps}})^p \leq \EE_{\mathcal{F}}\tr_N|A-A'|^p,$$
and besides, expanding the trace using the polar decomposition, we get
\begin{equation}
\EE_{\mathcal{F}}\tr_N\left|A-A'\right|^p \leq  c_0\frac{|I|^{p}}{N^{1+p/2}},\label{estimation distance couplage}
\end{equation}
where $c_0$ is constant independent of $N$ such that,
 $$\max\left( \EE \left|\sqrt{N}H_{1,1}\right|^p, \EE \left|\sqrt{N}H_{1,2}\right|^p\right)  \leq c_0.$$
Thus, in order to control $\EE_{\mathcal{F}}\tr_N\left|A-A'\right|^p$, we need to make sure that $I$ contains no more than $t N^{1+p/2}$ indices, for any $t>0$, at the exponential scale $N^{\alpha(\frac{1}{2}+\frac{1}{p})}$. By a argument similar as in the proof of Proposition \ref{nb non zero entries C}, we get  the following lemma.

\begin{Lem}\label{control nombre zero dans A}Let $I = \left\{ (i,j) : |X_{i,j}|> (\log N)^d\right\}$.
For $\delta>0$, we define the event, $$F_{\delta} = \left\{ |I| \leq \frac{\delta}{c_0} N^{1+2/p}  \right\}.$$
It holds that
$$\bornexpoalpha \PP\left( F_{\delta}^c \right)= -\infty.$$
\end{Lem}
Using \eqref{estimation distance couplage}, and Lemma \eqref{control nombre zero dans A}, we get the claim.

\end{proof}
\subsection{Third step}\label{third step}
We showed in Proposition \ref{concentdecouplage} that $(\EE_{\mathcal{F}}\tr_N\left(H+C^{\veps}\right)^p)_{N\in \NN, \veps>0}$ are exponentially good approximations of $(\tr_N X_N^p)_{N\in \NN}$. We will prove now that we can approximate $\EE_{\mathcal{F}}\tr_N\left(H+C^{\veps}\right)^p$ at the exponential scale $N^{\alpha\left(\frac{1}{2} + \frac{1}{p}\right)}$, by $\EE \tr_N H^p + \tr_N (C^{\veps})^p$, and then by $\langle\sigma_{sc}, x^p \rangle + \tr_N (C^{\veps})^p$. This will give good exponential approximations of $(\tr_N X_N^p)_{N\in \NN}$, as stated in the following proposition.

\begin{Pro}\label{equivtr}
For any $t>0$,
$$\lim_{\veps \to 0}\bornesupalpha \PP\left(\left| \tr_NX_N^p - \left\langle \sigma_{sc} , x^p \right\rangle - \tr_N\left(C^{\veps}\right)^p \right|>t \right) = -\infty,$$
where $A$ and $C^{\veps}$ are as in \eqref{cut}.
\end{Pro}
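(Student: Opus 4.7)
The plan is to use Proposition \ref{concentdecouplage} to reduce the problem to the conditional expectation $\EE_{\mathcal{F}}\tr_N(H+C^{\veps})^p$, and then to invoke the low-rank perturbation estimate of Lemma \ref{decompotrace} to split this quantity into the semicircular contribution coming from $H$ and the deterministic (given $\mathcal{F}$) term $\tr_N(C^{\veps})^p$. By Proposition \ref{concentdecouplage} it suffices to show, for every $t > 0$,
$$\lim_{\veps \to 0}\bornesupalpha \PP\Bigl(\bigl|\EE_{\mathcal{F}}\tr_N(H+C^{\veps})^p - \langle \sigma_{sc}, x^p\rangle - \tr_N(C^{\veps})^p\bigr| > t\Bigr) = -\infty.$$
Using the independence of $H$ from $\mathcal{F}$, so that $\EE_{\mathcal{F}}\tr_N H^p = \EE\tr_N H^p$, and the $\mathcal{F}$-measurability of $C^{\veps}$, the triangular inequality gives
$$\bigl|\EE_{\mathcal{F}}\tr_N(H+C^{\veps})^p - \EE\tr_N H^p - \tr_N(C^{\veps})^p\bigr| + \bigl|\EE\tr_N H^p - \langle \sigma_{sc}, x^p\rangle\bigr|.$$
The second summand is deterministic and tends to zero by an application of Wigner's theorem to $H$: its entries are (nearly) centered with bound $(\log N)^d/\sqrt{N}$ and with variance converging to $1/N$, exactly as in the argument following \eqref{diffXA} in the proof of Lemma \ref{tensionexpoA}.

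For the first summand I fix $r \in \NN$ and work on the $\mathcal{F}$-measurable event $\mathcal{E}_r = \{|\mathcal{I}_{\veps}| \leq r\}$, where $\mathcal{I}_{\veps}$ denotes the set of non-zero entries of $C^{\veps}$. On $\mathcal{E}_r$ the matrix $C^{\veps}$ has rank at most $2r$, and since each non-zero entry has modulus bounded by $\veps^{-1} N^{1/p}$,
$$\|C^{\veps}\|^2 \leq \tr(C^{\veps})^2 \leq r\veps^{-2}N^{2/p}, \qquad \text{hence } \|C^{\veps}\| \leq \sqrt{r}\,\veps^{-1} N^{1/p}.$$
Applying Lemma \ref{decompotrace} pointwise with $A = H$ and $C = C^{\veps}$ and taking conditional expectation then yields on $\mathcal{E}_r$
$$\EE_{\mathcal{F}}\bigl|\tr_N(H+C^{\veps})^p - \tr_N H^p - \tr_N(C^{\veps})^p\bigr| \leq \frac{2^{p+1} r}{N}\max_{1 \leq k \leq p-1}\EE\|H\|^k\cdot\|C^{\veps}\|^{p-k}.$$
Standard operator norm bounds for a Wigner matrix with bounded entries (obtainable e.g.\ from Proposition \ref{conctr} or a direct moment argument) give $\sup_N \EE\|H\|^k < \infty$ for every $k$, so the right-hand side is of order $r^{(p+1)/2}\veps^{-(p-1)}N^{-1/p}$, which vanishes as $N \to \infty$ for fixed $r, \veps$.

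Finally, Proposition \ref{nb non zero entries C} gives $\lim_{r\to\infty}\bornesupalpha \PP(\mathcal{E}_r^c) = -\infty$, so taking limits in the order $N \to \infty$, $r \to \infty$, $\veps \to 0$ finishes the proof.

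\textbf{Main obstacle.} The delicate point is the balancing of scales inside Lemma \ref{decompotrace}: the operator norm $\|C^{\veps}\|$ can be as large as $\sqrt{r}\,\veps^{-1} N^{1/p}$, and the right-hand side of the lemma involves $\|C^{\veps}\|^{p-k}$. Only the combination of the $1/N$ normalization in $\tr_N$ and the finiteness of the rank of $C^{\veps}$ (controlled by $r$ on $\mathcal{E}_r$) is enough to beat this blow-up, and the net gain is only $N^{-1/p}$. This small margin is exactly what motivates the critical truncation scale $N^{1/2+1/p}$ in the decomposition \eqref{cut}.
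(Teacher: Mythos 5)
Your proof is correct and follows essentially the same route as the paper: reduce via Proposition \ref{concentdecouplage} to the conditional expectation $\EE_{\mathcal{F}}\tr_N(H+C^{\veps})^p$, note $\EE\tr_N H^p\to\langle\sigma_{sc},x^p\rangle$, apply Lemma \ref{decompotrace} using the rank and operator-norm control of $C^{\veps}$ on the event $\{|\mathcal{I}_{\veps}|\leq r\}$, bound $\EE\|H\|^k$, and finish with Proposition \ref{nb non zero entries C}. The only cosmetic differences are your use of the Frobenius bound $\|C^{\veps}\|\leq\sqrt{r}\,\veps^{-1}N^{1/p}$ in place of the paper's coarser $|\mathcal{I}_{\veps}|\veps^{-1}N^{1/p}$, and the factor $2r$ versus $|\mathcal{I}_{\veps}|$ for the rank; both give the same $N^{-1/p}$ decay for fixed $r,\veps$.
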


In order to prove that $\EE \tr_N H^p + \tr_N (C^{\veps})^p$ is an exponential equivalent of $\EE_{\mathcal{F}}\tr_N\left(H+C^{\veps}\right)^p$, we will need the following deterministic lemma.

\begin{Lem}
Let $p\geq 2$. Let $H$ and $C$ be two Hermitian matrices of size $N$. Assume that $C$ is of rank at most $r$. We have
$$ \left|\tr\left(H + C\right)^p - \tr H^p - \tr C^p \right| \leq 2^p r \max_{1\leq k\leq p-1} ||H||^k ||C||^{p-k},$$
where $|| \ ||$ denotes the operator norm.
\end{Lem}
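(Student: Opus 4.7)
The plan is to start from the non-commutative binomial expansion
$$ (H+C)^p = \sum_{w\in\{H,C\}^p} w_1w_2\cdots w_p,$$
where each letter $w_i$ is either $H$ or $C$. Taking traces, the two pure words $HH\cdots H$ and $CC\cdots C$ contribute exactly $\tr H^p$ and $\tr C^p$, which cancel after subtraction. I am thus left with the $2^p-2$ mixed monomials, each of which involves at least one factor of $H$ and at least one factor of $C$, and the whole task reduces to estimating the trace of each such monomial.

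The central observation is that any monomial $M=w_1\cdots w_p$ containing at least one factor of $C$ inherits $\mathrm{rank}(M)\leq r$, since matrix multiplication cannot increase rank and $\mathrm{rank}(C)\leq r$. For any matrix $M$ of rank at most $r$, the Weyl-type bound $|\lambda_i(M)|\leq\sigma_i(M)$ on the eigenvalues by the singular values, combined with the fact that $M$ has at most $r$ nonzero singular values, each bounded by $||M||$, yields the elementary inequality $|\tr M|\leq r\,||M||$. Together with submultiplicativity of the operator norm, a mixed word containing exactly $k$ factors of $H$ and $p-k$ factors of $C$ (with $1\leq k\leq p-1$) therefore satisfies
$$ |\tr(w_1\cdots w_p)|\leq r\,||w_1\cdots w_p||\leq r\,||H||^k\,||C||^{p-k}.$$

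To conclude, there are $\binom{p}{k}$ mixed words with exactly $k$ occurrences of $H$, so summing over all of them gives
$$ \left|\tr(H+C)^p-\tr H^p-\tr C^p\right|\leq r\sum_{k=1}^{p-1}\binom{p}{k}\,||H||^k\,||C||^{p-k}\leq 2^p r\max_{1\leq k\leq p-1}||H||^k\,||C||^{p-k},$$
where the last step uses $\sum_{k=0}^p\binom{p}{k}=2^p$. The only step that requires a bit of thought is the rank-trace inequality $|\tr M|\leq r\,||M||$ for matrices of rank at most $r$; all the rest is routine combinatorial bookkeeping, so I do not anticipate any real obstacle.
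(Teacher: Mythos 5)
Your proof is correct and follows essentially the same route as the paper: expand $(H+C)^p$ into the $2^p$ words in $H$ and $C$, note the two pure words cancel, and bound each mixed word by $r\,\|H\|^k\|C\|^{p-k}$ using the low rank inherited from $C$ (the paper does this by cycling $C$ to the last position and summing over an eigenbasis of $C$, you do it via the general inequality $|\tr M|\leq r\,\|M\|$ for $\mathrm{rank}(M)\leq r$). One small caveat: the pointwise bound $|\lambda_i(M)|\leq\sigma_i(M)$ you invoke is false for individual $i$ (non-normal matrices give counterexamples); what is true, and suffices, is Weyl's majorant theorem $\sum_{i\leq k}|\lambda_i(M)|\leq\sum_{i\leq k}\sigma_i(M)$, or more simply $|\tr M|=|\tr(MP)|\leq\|M\|\,\tr P\leq r\,\|M\|$ with $P$ the orthogonal projection onto $(\ker M)^{\perp}$, so the lemma you rely on does hold.
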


\begin{proof}Expanding the sum we get
$$ \tr\left(H+C\right)^p =\sum_{k=0}^p \sum_{\underset{| \{i : M^{(i)} = H\}| =k}{M^{(i)} \in \{H, C\}}} \tr\left(M^{(1)}...M^{(p)}\right).$$
Let $k\in \{1,...,p-1\}$, and let $M^{(1)}$,...,$M^{(p)}$ be matrices such that $M^{(i)} \in \{H, C\}$, and $\Card \{ i : M^{(i)} = H\} =k$ . Let $(\eta_j)_{1\leq j\leq N}$ be an orthonormal basis of eigenvectors for $C$ such that $\eta_{r+1},...,\eta_{N}$ are in the kernel of $C$. Using the cyclicity of the trace, we can assume $M^{(p)} = C$. Assuming $M^{(p)} = C$, we get
 \begin{align*}
\left|\tr\left(M^{(1)}...M^{(p)}\right)\right|& = \left|\sum_{j=1}^N \left\langle M^{(1)}...M^{(p)} \eta_j, \eta_j \right \rangle\right| \\
&= \left|\sum_{j=1}^r \left\langle M^{(1)}...M^{(p)} \eta_j, \eta_j \right \rangle \right| \\
& \leq r || H||^k ||C||^{p-k},
\end{align*}
which ends the proof of the claim.

\end{proof}

%
%

%

\begin{proof}
Note that the same argument as in the proof of Lemma \ref{tensionexpoA} yields
$$ \EE \tr_N H^p \underset{N\to +\infty}{\longrightarrow} \left\langle \sigma_{sc}, x^p \right\rangle,$$
Therefore, due to Proposition \ref{concentdecouplage},
we only need to prove that for any $\veps>0$,
\begin{equation*} \label{decompotr} \bornexpoalpha\PP\left(\left|\EE_{\mathcal{F}}\tr_N\left(H+C^{\veps}\right)^p - \EE \tr_N H^p - \tr_N \left(C^{\veps}\right)^p \right| >t \right) = -\infty.\end{equation*}
Using Lemma \ref{decompotrace} and the fact that the rank of a matrix is bounded by the number of its non-zero entries, we have
$$\left|\EE_{\mathcal{F}}\tr_N\left(H+C^{\veps}\right)^p - \EE \tr_NH^p- \tr_N \left(C^{\veps}\right)^p \right|  \leq \frac{2^p}{N} |\mathcal{I}_{\veps}|\max_{1\leq k\leq p-1}\left\{ ||C^{\veps}||^{p-k}\EE||H||^{k}\right\},$$
where $\mathcal{I}_{\veps}$ denotes the set of indices $(i,j)$ such that $C^{\veps}_{i,j} \neq 0$.
But,
$$ || C^{\veps} || \leq \left|\mathcal{I}_{\veps}\right| \sup_{i,j} \left|C_{i,j} \right| \leq  \left|\mathcal{I}_{\veps}\right|\veps^{-1} N^{1/p}.$$
Thus,
$$\left|\EE_{\mathcal{F}}\tr_N\left(H+C^{\veps}\right)^p - \EE \tr_NH^p- \tr_N \left(C^{\veps}\right)^p \right|  \leq \frac{2^p\veps^{-p+1}}{N^{1/p}} |\mathcal{I}_{\veps}|^p\max_{1\leq k\leq p-1}\EE||H||^{k}.$$
But we know from \cite[Theorem 2.1.22, Exercice 2.1.27]{Guionnet}  that $||X||$ converges in all $L^p$ spaces to $2$, and we have
$$ \EE ||X-H||^p= \EE ||X-A'||^p\leq \EE \tr|X-A'|^p,$$
where $A'$ is as in \eqref{defA'}. With the same argument as in Lemma \ref{tensionexpoA}, we get 
$$ \EE \tr|X-A'|^p = o(1).$$
Thus, for any $k\in \{1,...,p\}$, $\EE||H||^k$ is bounded.
We can find a constant $M_p>0$ such that, 
 $$\left|\EE_{\mathcal{F}}\tr_N\left(A+C^{\veps}\right)^p - \EE \tr_N A^p - \tr_N \left(C^{\veps}\right)^p \right|  \leq  M_p |\mathcal{I}_{\veps}|^p N^{-\frac{1}{p}}.$$
Thus, for any $t>0$, and $r\in \NN$,
$$ \bornexpoalpha\PP\left(\left|\EE_{\mathcal{F}}\tr_N\left(A+C^{\veps}\right)^p - \EE \tr_N A^p - \tr_N \left(C^{\veps}\right)^p \right| >t, |\mathcal{I}_{\veps}|\leq r \right) = -\infty.$$
Invoking Lemma \ref{nb non zero entries C}, we get the claim.
\end{proof}

\subsection{A large deviations principle for $\tr_N X_N^p$}\label{LDP tr}
We proved in the previous section that $(\langle \sigma_{sc}, x^p \rangle + \tr_N(C^{\veps})^p)_{\veps>0, N \in \NN}$ are exponentially good approximations of $(\tr_N X_N^p)_{N\in \NN}$ at the exponential scale considered. The aim of this section is to show that we can derive a LDP for each $\veps>0$ for $(\tr_N(C^{\veps})^p)_{N\in \NN}$, using the contraction principle, and deduce a LDP for $(\tr_N X_N^p)_{N\in \NN}$. 


In the view of applying a contraction principle for the sequence $(\tr_N(C^{\veps})^p)_{N\in \NN}$, we need to find a good space to embed $C^{\veps}$ so that we can define a trace which will be continuous.
For every $r\in \NN$, we define
$$ \mathcal{E}_r = \{A\in \cup_{n\geq 1}\mathcal{H}^{(\beta)}_n : \mathrm{Card}\{(i,j) : A_{i,j} \neq 0\} \leq r \}.$$
For any $n\in \NN$, let $\mathcal{S}_n$ be the symmetric group on the set $\{1,...,n\}$. Let $\mathcal{S}$ denote the group $\cup_{n\in \NN}\mathcal{S}_n$. We denote $\widetilde{\mathcal{E}}_r$ the set of equivalence classes of $\mathcal{E}_r$ under the action of $\mathcal{S}$, which is defined by
$$ \forall \sigma \in \mathcal{S}, \forall A \in  \mathcal{E}_r, \ \sigma.A = M_{\sigma}^{-1}AM_{\sigma} = \left(A_{\sigma(i), \sigma(j)}\right)_{i,j},$$
where $M_{\sigma}$ denote the permutation matrix associated with the permutation $\sigma$ i.e $M_{\sigma} =(\delta_{i,\sigma(j)})_{i,j}$.

Let $\mathcal{H}^{(\beta)}_r/\mathcal{S}_r$ be the set of equivalence classes of $\mathcal{H}^{(\beta)}_r$ under the action of the symmetric group $\mathcal{S}_r$. Note that any equivalence class of the action of $\mathcal{S}$ on $\mathcal{E}_r$ has a representative in $\mathcal{H}^{(\beta)}_r$. This defines an injective map from $\widetilde{\mathcal{E}}_r$ into $\mathcal{H}^{(\beta)}_r/  \mathcal{S}_r$. Identifying $\widetilde{\mathcal{E}}_r$ to a subset of $\mathcal{H}^{(\beta)}_r / \mathcal{S}_r$, we equip $\widetilde{\mathcal{E}}_r$ of the quotient topology of $\mathcal{H}^{(\beta)}_r / \mathcal{S}_r$.
This topology is metrizable by the distance $\tilde{d}$ given by
\begin{equation} \label{definition distance topo quotient}\forall \tilde{A}, \tilde{B} \in \widetilde{\mathcal{E}}_r, \ \tilde{d}\left(\tilde{A}, \tilde{B}\right) = \min_{\sigma, \sigma' \in \mathcal{S}}\max_{i,j} \left|B _{\sigma(i), \sigma(j)} - A_{\sigma'(i),\sigma'(j)}\right|,\end{equation}
where $A$ and $B$ are two representatives of $\tilde{A}$ and $\tilde{B}$ respectively.

Since the trace is continuous and  invariant by conjugation, we can define the trace on $\mathcal{H}^{(\beta)}_r/ \mathcal{S}_r$ and it will be still continuous. Therefore, the trace on  $\widetilde{\mathcal{E}}_r$ is continuous for the topology we defined above.

Let $\veps>0$. Let $\PP^{\veps}_{N,r}$ denote the law of $C^{\veps}/N^{1/p}$ conditioned on the event $\{C^{\veps} \in \mathcal{E}_r\}$, and $\widetilde{\PP}^{\veps}_{N,r}$ the push-forward of $\PP^{\veps}_{N,r}$ by the projection $\pi : \mathcal{E}_r \to \widetilde{\mathcal{E} }_r$.
With these preliminary definitions, we can now state the LDP result for $(\widetilde{\PP}^{\veps}_{N,r})_{N\in \NN}$. The result is almost identical as \cite[Proposition 7.1]{LDPevNG}, the only difference being the choice of truncation of the entries. Thus, the rate function is identical, and only the speed is different. We refer the reader to  \cite{LDPevNG} for the proof of the following proposition.
\begin{Pro} \label{LDP nb entree fixe}
Let $r\in \NN$ and $\veps>0$. Then $(\widetilde{\PP}^{\veps}_{N,r})_{N\in \NN}$ satisfies a large deviations principle with speed $N^{\alpha\left(\frac{1}{2}+\frac{1}{p}\right)}$, and good rate function $I_{\veps, r}$ defined  for all $\tilde{A}\in \widetilde{\mathcal{E}}_r$ by
\begin{equation} I_{\veps, r}\left(\tilde{A}\right) = 
\begin{cases}
b\sum_{i\geq 1}\left|A_{i,i}\right|^{\alpha} + \frac{a}{2}\sum_{i\neq j} \left|A_{i,j}\right|^{\alpha} & \text{ if } A \in \mathcal{D}_{\veps,r },\\
+\infty & \text{ otherwise,}
\end{cases}
\label{fntaux}\end{equation} 
where $A$ is a representative of the equivalence class $\tilde{A}$ and 
$$\mathcal{D}_{\veps,r} =\left \{ A \in \mathcal{E}_r : \forall i\leq j, \ A_{i,j} = 0 \text{ or } \ \veps\leq \left|A_{i,j}\right| \leq \veps^{-1}, \text{ and } A_{i,j}/|A_{i,j}|  \in \mathrm{supp}(\nu_{i,j}) \right\},$$ 
with $\nu_{i,j} = \nu_1$ if $i=j$, and $\nu_{i,j} = \nu_{2}$ if $i<j$, where $\nu_1$ and $\nu_2$ are defined in definition \ref{WG}.
\end{Pro}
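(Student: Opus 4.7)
The plan is to reduce the LDP to a Cramér-type statement for the individual entries by conditioning on the locations of the non-zero coefficients of $C^\veps$. Write
$$ \widetilde{\PP}^\veps_{N,r} = \sum_{k=0}^r \widetilde{\PP}^\veps_{N,r}\bigl(\,\cdot\,\cap \{|\mathcal{I}_\veps|=k\}\bigr),$$
and, for each $k$, decompose further according to the choice of positions $\{(i_l,j_l)\}_{1\leq l\leq k}$ (with $i_l\leq j_l$) at which the non-zero entries occur. Once the locations are fixed, the random variables $X_{i_l,j_l}/N^{1/2+1/p}$ are independent; conditioned on $\veps \leq |X_{i_l,j_l}|/N^{1/2+1/p}\leq \veps^{-1}$, the assumption \eqref{queue de distrib} together with the decoupling of modulus and argument in Definition \ref{WG} give exact logarithmic asymptotics at speed $N^{\alpha(1/2+1/p)}$ for $X_{i_l,j_l}/N^{1/2+1/p}$ to lie in any prescribed set, with exponential rate $b|z|^\alpha$ on the diagonal and $a|z|^\alpha$ off-diagonal. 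The combinatorial number $\binom{N}{k}$ (or $N^{2k}$ for off-diagonal patterns) of position patterns is only polynomial in $N$, hence negligible at our exponential speed.

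For the upper bound, let $F\subset \widetilde{\mathcal{E}}_r$ be closed. Conditionally on a fixed location pattern with $k$ entries, the tail assumption \eqref{queue de distrib} gives
$$ \PP\bigl(X_{i_l,j_l}/N^{1/2+1/p}\in U_l,\ 1\leq l\leq k\bigr) \leq \exp\Bigl(-N^{\alpha(1/2+1/p)}\bigl(1-o(1)\bigr)\sum_l c_{i_l,j_l}\inf_{z\in U_l}|z|^\alpha\Bigr),$$
with $c_{i,i}=b$ and $c_{i,j}=a$ for $i<j$. Since all $k!$ re-labellings of the locations produce the same equivalence class in $\widetilde{\mathcal{E}}_r$, one may reduce to a canonical representative in $\mathcal{H}^{(\beta)}_r$. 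Combining the polynomial count of locations with the tail estimate and taking infimum over $U_l$ covering a neighborhood of $F$, one obtains
$$\limsup_{N\to+\infty}\frac{1}{N^{\alpha(1/2+1/p)}}\log \widetilde{\PP}^\veps_{N,r}(F) \leq -\inf_F I_{\veps,r}.$$
The lower bound is symmetric: given $\tilde{A}$ in an open set $O$ with a representative $A\in \mathcal{D}_{\veps,r}$, choose a small $\tilde{d}$-neighborhood of $\tilde{A}$ inside $O$, lift it to a neighborhood of some canonical matrix in $\mathcal{H}^{(\beta)}_r$ via the quotient topology, and use the matching lower bound from \eqref{queue de distrib} for each entry to land in a small product neighborhood of the prescribed values. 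The resulting rate is exactly $b\sum_i|A_{i,i}|^\alpha + \tfrac{a}{2}\sum_{i\neq j}|A_{i,j}|^\alpha$, the factor $\tfrac12$ accounting for the fact that symmetric/Hermitian entries $(i,j)$ and $(j,i)$ are tied together and contribute a single independent random variable.

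The good rate function property of $I_{\veps,r}$ follows from the constraint $\veps\leq |A_{i,j}|\leq \veps^{-1}$ on $\mathcal{D}_{\veps,r}$, which makes every sublevel set a closed subset of a compact set in $\widetilde{\mathcal{E}}_r$ (there are only finitely many position patterns on $\{1,\ldots,r\}^2$, and each entry is confined to a compact annulus). The main technical obstacle is not the entry-wise probability estimate, which is immediate from \eqref{queue de distrib}, but rather handling the quotient topology on $\widetilde{\mathcal{E}}_r$ carefully: the distance $\tilde{d}$ in \eqref{definition distance topo quotient} involves a double minimization over permutations, so one must verify that a $\tilde{d}$-neighborhood of $\tilde{A}$ can be described, up to permutation, as a product of coordinate-wise neighborhoods around some canonical representative, and that the support constraint on $A_{i,j}/|A_{i,j}|$ (belonging to $\mathrm{supp}(\nu_{i,j})$) is compatible with the lower bound construction. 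This is exactly the step carried out in \cite{LDPevNG}, to which we refer for the details.
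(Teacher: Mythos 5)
The paper gives no proof of its own here; it defers entirely to \cite[Proposition~7.1]{LDPevNG}, remarking only that the truncation level and the speed differ. Your sketch correctly reconstructs the argument used there — Cram\'er-type estimates from \eqref{queue de distrib} for the at most $r$ medium-range entries, a polynomial combinatorial overcount of position patterns, the factor $\tfrac{a}{2}$ coming from the Hermitian pairing of $(i,j)$ and $(j,i)$ into one independent variable, and compactness of the level sets from the constraints $\veps\leq|A_{i,j}|\leq\veps^{-1}$ — and, like the paper, you defer the careful handling of the quotient topology on $\widetilde{\mathcal{E}}_r$ to \cite{LDPevNG}, so this matches the paper's approach.
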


We are now ready to use a contraction principle to prove that $(\tr_N(C^{\veps})^p)_{N\in\NN}$ follows a LDP for any $\veps>0$. The use of the contraction principle is made possible by the fact that the push-forward of $\tilde{\PP}^{\veps}_{N,r}$ by the map $A\mapsto \tr A^p$ on $\cup_{n\in\NN} H_n(\CC)$, are exponentially good approximations of $(\tr_N(C^{\veps})^p)_{N\in \NN}$.

\begin{Pro}\label{LDPtrC}
Let  $\veps>0$. The sequence $(\tr_N\left(C^{\veps}\right)^p)_{N\in \NN}$ satisfies a large deviations principle of speed $N^{\alpha\left( \frac{1}{2}+ \frac{1}{p} \right)}$, and good rate function $J_{\veps}$ defined for all $x \in \RR$ by,
$$J_{\veps}(x) = \inf \left\{ I_{\veps}\left(A\right) :   x= \tr A^p, A\in \cup_{n\in \NN} \mathcal{H}^{(\beta)}_n \right\},$$
where 
\begin{equation}\forall A \in \cup_{n\in \NN} \mathcal{H}^{(\beta)}_n, \  I_{\veps}\left(A\right) = 
\begin{cases}
b\sum_{i\geq 1}\left|A_{i,i}\right|^{\alpha} + \frac{a}{2}\sum_{i\neq j} \left|A_{i,j}\right|^{\alpha} & \text{ if } A \in \mathcal{D}_{\veps },\\
+\infty & \text{ otherwise,}
\end{cases}
\label{fntaux}\end{equation} 
where $\mathcal{D}_{\veps} =\cup_{r \in \NN}\mathcal{D}_{\veps, r}$, with  $\mathcal{D}_{\veps, r}$ as in Proposition \ref{LDP nb entree fixe}.
\end{Pro}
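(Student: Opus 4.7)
The plan is to derive the LDP for $(\tr_N(C^{\veps})^p)_{N\in \NN}$ from Proposition \ref{LDP nb entree fixe} by means of the contraction principle applied through the map $\tilde{A}\mapsto \tr A^p$ on $\widetilde{\mathcal{E}}_r$, and then to let $r\to +\infty$, using Proposition \ref{nb non zero entries C} to control the probability that $C^{\veps}$ has more than $r$ non-zero entries. Exponential tightness of $(\tr_N(C^{\veps})^p)_{N\in \NN}$ at the required speed has already been established in Lemma \ref{tensionexpoC}, so it is enough to establish a weak LDP.

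For the first step, I would observe that the trace being invariant under conjugation by permutation matrices, the map $\tilde{A}\mapsto \tr A^p$ descends to a well-defined continuous function on $\widetilde{\mathcal{E}}_r$ equipped with the quotient topology induced by the metric \eqref{definition distance topo quotient}. Combined with the identity $\tr(A/N^{1/p})^p = \tr_N A^p$, the contraction principle applied to $(\widetilde{\PP}^{\veps}_{N,r})_{N\in \NN}$ yields a LDP at speed $N^{\alpha(1/2+1/p)}$ for $\tr_N(C^{\veps})^p$ under the conditional law $\PP(\,\cdot\mid C^{\veps}\in\mathcal{E}_r)$, with good rate function
$$J_{\veps,r}(x) \;=\; \inf\bigl\{I_\veps(A)\,:\, A\in \mathcal{D}_{\veps,r},\ \tr A^p = x\bigr\}.$$
Since $\mathcal{D}_{\veps,r}\subset \mathcal{D}_{\veps,r+1}$ and $\mathcal{D}_\veps = \cup_r \mathcal{D}_{\veps,r}$, the sequence $(J_{\veps,r})_r$ is non-increasing and converges pointwise to $J_\veps$ as $r\to +\infty$.

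To remove the conditioning, for a closed set $F\subset\RR$ I would write
$$\PP\bigl(\tr_N(C^{\veps})^p\in F\bigr)\leq \PP\bigl(\tr_N(C^{\veps})^p\in F\,\big|\,C^{\veps}\in\mathcal{E}_r\bigr)+ \PP\bigl(C^{\veps}\notin \mathcal{E}_r\bigr),$$
apply the upper bound of the conditional LDP together with Proposition \ref{nb non zero entries C}, take first $\limsup_N$, then $r\to +\infty$, to obtain $\limsup \leq -\inf_F J_\veps$. For the lower bound on an open set $G$ and a point $x\in G$ with $J_\veps(x)<+\infty$, I would pick $A\in \mathcal{D}_\veps$ with $\tr A^p = x$ and $I_\veps(A)\leq J_\veps(x)+\delta$; such an $A$ has finitely many non-zero entries, so $A\in \mathcal{D}_{\veps,r}$ for all $r$ large enough, and hence $J_{\veps,r}(x)\leq I_\veps(A)$. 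The inequality
$$\PP\bigl(\tr_N(C^{\veps})^p\in G\bigr)\geq \PP\bigl(C^{\veps}\in \mathcal{E}_r\bigr)\,\PP\bigl(\tr_N(C^{\veps})^p\in G\mid C^{\veps}\in \mathcal{E}_r\bigr),$$
together with the fact that $\PP(C^{\veps}\in \mathcal{E}_r)\to 1$ at any exponential rate (Proposition \ref{nb non zero entries C}), then allows me to pass to the $\liminf_N$ and conclude.

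The main obstacle is the delicate bookkeeping between the two successive limits in $N$ and $r$, which requires the monotonicity $J_{\veps,r}\searrow J_\veps$ and the fact — ensured by the constraint $A\in \mathcal{D}_\veps$ built into the rate function $I_\veps$ — that the infimum defining $J_\veps$ is always approximated by matrices with bounded support. A minor additional point is checking that the rate function $I_{\veps,r}$ appearing in Proposition \ref{LDP nb entree fixe}, when expressed on a representative $A\in\mathcal{D}_{\veps,r}$, agrees with $I_\veps(A)$; this is immediate from \eqref{fntaux}.
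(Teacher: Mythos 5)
Your argument follows essentially the same route as the paper's: contract the LDP of Proposition \ref{LDP nb entree fixe} through the continuous map $\tilde{A}\mapsto \tr A^p$ on $\widetilde{\mathcal{E}}_r$, then remove the conditioning on $\{C^{\veps}\in\mathcal{E}_r\}$ using Proposition \ref{nb non zero entries C}. The only methodological difference is that you perform the double limit in $N$ and $r$ by hand, whereas the paper packages it as an application of the exponentially good approximations theorem (\cite[Theorem 4.2.16]{Zeitouni}) and is then left with identifying $\sup_{\delta>0}\liminf_{r}\inf_{|y-x|<\delta}J_{\veps,r}(y)$ with $J_{\veps}$. Your direct treatment of the two bounds is correct: for the upper bound the inequality $J_{\veps,r}\geq J_{\veps}$ suffices, and for the lower bound any near-minimizer $A\in\mathcal{D}_{\veps}$ indeed lies in some $\mathcal{D}_{\veps,r}$.

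The genuine gap is that you never establish that $J_{\veps}$ is a good rate function. Lower semicontinuity is part of the definition of an LDP, goodness is asserted in the statement, and both are used again when the $\veps\to 0$ approximation is performed in the proof of Theorem \ref{LDPtraceWG}. Your remark that the infimum defining $J_{\veps}$ is "approximated by matrices with bounded support" is true of every individual $A\in\mathcal{D}_{\veps}$, but it does not by itself give lower semicontinuity: a decreasing limit $\inf_r J_{\veps,r}$ of lower semicontinuous functions need not be lower semicontinuous, and a priori a minimizing sequence for $\inf_F J_\veps$ could require an unbounded number of non-zero entries. The missing observation is quantitative: since every non-zero entry of $A\in\mathcal{D}_{\veps}$ has modulus at least $\veps$, one has $I_{\veps}(A)\geq \left(b\wedge \frac{a}{2}\right)\veps^{\alpha}\,\Card\{(i,j): A_{i,j}\neq 0\}$, so on a level set $\{J_{\veps}\leq \tau\}$ the infimum may be restricted to a single $\mathcal{D}_{\veps,r}$ with $r$ depending only on $\tau$ and $\veps$. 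The level set is then contained in the image, under the continuous map $\tilde{A}\mapsto \tr A^p$ on $\widetilde{\mathcal{E}}_r$, of a level set of the good rate function $I_{\veps,r}$, hence is compact. You should add this step to complete the proof.
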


\begin{proof}
Let $r\in \NN$. We denote by $f$ the function $\tilde{A} \in \tilde{\mathcal{E}_r} \mapsto \tr A^p$, with $A$ a representative of $\tilde{A}$. As the trace is invariant by conjugation, $f$ is well defined.  We define the push-forward of $\tilde{P}^{\veps}_{N,r}$ by the map $f$, $$\nu_{N,r} = \tilde{P}^{\veps}_{N,r}\circ f^{-1}.$$
Note that $\nu_{N,r}$ is the law of $\tr_N(C^{\veps})^p$ conditioned on the event $\{C^{\veps} \in \mathcal{E}_r\}$. We will show that $(\nu_{N,r})_{N, r\in \NN}$ are exponentially good approximations of $(\tr_N(C^{\veps})^p)_{N\in \NN}$. Let $Y_{N,r}$ be random variable independent of $C^{\veps}$, and distributed according to $\nu_{N,r}$. Let
$$ Z_{N,r} = \tr_N\left(C^{\veps}\right)^p\Car_{C^{\veps} \in \mathcal{E}_r} + Y_{N,r}\Car_{C^{\veps} \notin \mathcal{E}_r}.$$
Thus, $Z_{N,r}$ and $Y_{N,r}$ have the same law $\nu_{N,r}$. Furthermore, for any $t>0$,
$$ \PP\left( \left|Z_{N,r} - \tr_N\left(C^{\veps}\right)^p \right|> t \right) \leq \PP\left( C^{\veps} \notin \mathcal{E}_r \right).$$
By Proposition \ref{nb non zero entries C}, we get
$$\bornexpoalpha  \PP\left( \left|Z_{N,r} - \tr_N\left(C^{\veps}\right)^p \right|> t \right) = -\infty,$$
which shows that $(\nu_{N,r})_{N,r \in \NN}$ are exponentially good approximations of $(\tr_N(C^{\veps})^p)_{N\in \NN}$. 

For each $r\in \NN$, the function $f$ restricted to $\widetilde{\mathcal{E}}_r$ is continuous for the topology we equipped $\widetilde{\mathcal{E}}_r$. Note that as $C^{\veps}$ has entries bounded by $\veps^{-1}N^{1/p}$, $\nu_{N,r}$ is compactly supported uniformly in $N$. Thus, $(\nu_{N,r})_{N\geq1}$ is exponentially tight,  the contraction principle (see \cite{Zeitouni}[Theorem 4.2.1]) yields that $(\nu_{N,r})_{N \in \NN}$ follows a LDP principle with speed $N^{\alpha ( \frac{1}{2} + \frac{1}{p})}$ and good rate function $J_{\veps, r}$ given by
$$ J_{\veps, r}(x) = \inf\left\{ I_{ \veps, r}(\tilde{A}) : \tilde{A}\in \widetilde{\mathcal{E}}_r,  x = f(\tilde{A}) \right\},$$
where $I_{\veps,r}$ is defined in Proposition \ref{LDP nb entree fixe}. We can re-write this rate function  as
$$ J_{\veps, r}(x) =
 \inf\left\{ I_{ \veps}(A) : A\in \mathcal{E}_r, \ x =f(A) \right\},$$
where $f$ denote as well the function $A\mapsto \tr(A)^p$ on $\cup_{n\in \NN} \mathcal{H}^{(\beta)}_n$, and where $I_{\veps}$ is defined in \eqref{fntaux}.
By \cite[Theorem 4.2.16]{Zeitouni}, we deduce that $(\tr_N(C^{\veps})^p)_{N\in \NN}$ satisfies  a weak LDP with speed $N^{\alpha(\frac{1}{2}+\frac{1}{p})}$, and rate function $J_{\veps}$ defined by
$$\forall x \in \RR, \ J_{\veps}(x) = \sup_{\delta>0}\liminf_{r\to +\infty} \inf_{|y-x|<\delta} J_{\veps,r}(y).$$
As $J_{\veps,r}$ is non-increasing in $r$, we have
$$J_{\veps}(x) = \sup_{\delta>0}\inf_{r\in \NN} \inf_{|y-x|<\delta} J_{\veps,r}(y) =  \sup_{\delta>0} \inf_{|y-x|<\delta}  \inf_{r\in \NN} J_{\veps,r}(y).$$
Let $\Phi$ be the function defined by
$$\forall x \in \RR, \ \Phi(x) =  \inf_{r\in \NN} J_{\veps,r}(x).$$
Thus,
$$J_{\veps}(x) = \sup_{\delta>0} \inf_{|y-x|<\delta} \Phi(y).$$
We see that it suffices to show that $\Phi$ is lower semi-continuous to conclude that $J_ {\veps} = \Phi$. We will prove in fact that $ \Phi$ has compact level sets. 

Let $\tau>0$. Let $x\in \RR$, such that $\Phi(x) \leq \tau$. Then
$$\Phi(x) = \left\{ I_{\veps}(A) : x =f(A),  I_{\veps}(A) \leq 2\tau\right\}.$$
But for any $A\in \cup_{n\in \NN} H_n(\CC)$ such that $I_{\veps}(A)< +\infty$, we have
$$(b \wedge \frac{a}{2} )\veps^{\alpha} \Card\left\{ (i,j) : A_{i,j} \neq 0\right\} \leq I_{\veps}(A).$$
Thus taking $r$ such that $(b \wedge \frac{a}{2})\veps^{\alpha}\leq \tau$, we get 
\begin{align*}
\Phi(x) &=  \left\{ I_{\veps}(A) : x =f(A),  I_{\veps}(A) \leq 2\tau, A \in \mathcal{E}_r\right\}\\
& =  \left\{ I_{\veps,r}(\tilde{A}) : x =f(A), \tilde{A} \in \widetilde{\mathcal{E}}_r\right\}.
\end{align*}
Since $f$ is continuous on $\widetilde{\mathcal{E}}_r$ and $I_{\veps,r}$ is a good rate function, we have
$$ \left\{ x \in \RR : \Phi(x) \leq \tau \right\} = \left\{ f(\tilde{A}) : I_{\veps, r} (\tilde{A}) \leq \tau, \tilde{A} \in \widetilde{\mathcal{E}}_r \right\}.$$
As $f$ is continuous on $\mathcal{E}_r$, and $I_{\veps,r}$ is a good rate function, we deduce that the $\tau$-level sets of $\Phi$ are compact. Therefore $J_{\veps} = \Phi$.

\end{proof}
We are now ready to conclude the proof of Theorem \ref{LDPtraceWG}

\begin{proof}[Proof of Theorem \ref{LDPtraceWG} ] 
By Proposition \ref{equivtr}, $(\langle \sigma_{sc} , x^p \rangle + \tr_N(C^{\veps})^p)_{N\in \NN, \veps>0}$ are exponentially good approximations of $(\tr_N X_N^p)_{N\in \NN }$. 
We deduce from  Proposition \ref{LDPtrC} that for each $\veps>0$,  the sequence $(\langle \sigma_{sc} , x^p \rangle + \tr_N(C^{\veps})^p)_{N\in \NN }$ satisfies a LDP with speed $N^{\alpha( \frac{1}{2}+\frac{1}{p})},$ and with good rate function $\psi_{\veps}$ defined by
$$\psi_{\veps}(x) = \begin{cases}
J_{\veps}\left(x-C_{p/2}\right) & \text{ if $p$ is even,}\\
J_{\veps}(x) & \text{ if $p$ is odd,}
\end{cases}$$
where $J_{\veps}$ is as in Proposition \ref{LDPtrC}.
Since $(\tr_N X_N^p)_{N\geq 1}$ is exponentially tight by Proposition \ref{tensionexpo}, we deduce from \cite[Theorem 4.2.16]{Zeitouni} that $(\tr_N X_N^p)_{N\in \NN}$ satisfies a LDP with speed $N^{\alpha( \frac{1}{2} + \frac{1}{p})}$ and rate function $J_p$ defined by
$$ \forall x \in \RR,  \ J_p(x) = \sup_{\delta>0} \limsup_{\veps \to 0} \inf_{|y-x|<\delta}\psi_{\veps}(y).$$
Observe that for any $A\in \cup_{n\in \NN} \mathcal{H}^{(\beta)}_n$, $I_{\veps}(A)$ is non-decreasing in $\veps$. Therefore, $\psi_{\veps}$ is non-decreasing in $\veps$. Thus,
\begin{equation} \label{zeitounifntaux} \forall x \in \RR, \ J_p(x) = \sup_{\delta>0} \inf_{\veps >0} \inf_{|y-x|<\delta}\psi_{\veps}(y).\end{equation}
 Let 
$$ \forall x\in \RR, \ \Phi_p(x) = \begin{cases}
\phi_p\left(x-C_{p/2}\right) & \text{ if $p$ is even,}\\
\phi_p(x) & \text{ if $p$ is odd,}
\end{cases}$$
with
$$ \phi_p(x) = \inf\left\{ I(A) :  x= \tr A^p , A\in \mathcal{D} \right\},$$
where $I$ is defined for any $A\in \cup_{n\geq 1} \mathcal{H}^{(\beta)}_n$, by
$$ I\left( A\right) = b \sum_{i=1}^{+\infty} \left|A_{i,i} \right|^{\alpha} + a \sum_{i<j} \left| A_{i,j} \right|^{\alpha},$$
and $\mathcal{D} = \{ \cup_{n\in  \NN} \mathcal{H}^{(\beta)}_n  : \forall i\leq j, \ A_{i,j} = 0 \text{ or } \  \text{ and } A_{i,j}/|A_{i,j}|  \in \mathrm{supp}(\nu_{i,j})\}$. With these notations we have,
\begin{equation}\label{eqfntaux} J_p(x) =  \sup_{\delta>0} \inf_{|x-y|<\delta} \Phi_p(y).\end{equation}
 As for any $t>0$, and $A\in \cup_{n\in \NN} \mathcal{H}^{(\beta)}_n$, $I(tA) = t^{\alpha} I(A)$, and $\tr\left( tA\right)^p = t^p \tr A^p$, we have for $p$ even,
$$\forall y\in \RR, \  \phi_p(y) = \begin{cases}
\phi_p(1) y^{\alpha/p} & \text{ if } y\geq 0,\\
+\infty & \text{ otherwise,}
\end{cases}$$
and for $p$ odd
$$\forall y\in \RR, \  \phi_p(y) =
\phi_p(1) |y|^{\alpha/p}.$$
Therefore,
$$ \forall x\in \RR, \ \Phi_p(x) = \begin{cases}
\phi_p(1) \left(x-C_{p/2}\right)^{\alpha/p} & \text{ if $p$ is even,}\\
+\infty & \text{ otherwise,}
\end{cases}$$
and if $p$ is odd
$$ \forall x\in \RR, \ \Phi_p(x) =
\phi_p(1)|x|^{\alpha/p}.$$
This shows in particular that $\Phi_p$ is lower semi-continuous. From \eqref{eqfntaux}, we get finally $J_p = \Phi_p$. 

\end{proof}


%
%
%
%
%

\subsection{Computation of $J_p(1)$}\label{CompJ1}
We show here that we can compute the constant $c_p$ appearing in Theorem \ref{LDPtraceWG} when $\alpha \in (0,1]$ and $p$ is even, and we give a lower bound and upper bound in the case where $\alpha \in (1,2)$ and $p$ is even.
\begin{The}With the notations of Theorem \ref{LDPtraceWG}, we have the following :\\
(a). If $p$ is even, 
$$\min\left( b, \frac{a}{2} \right) \leq  c_p \leq \min \left( b, 2^{-\alpha/p}a\right) .$$
(b). If $\alpha \in (0,1]$ and $p$ is even,
$$ c_p = \min \left( b, 2^{-\alpha/p} a \right).$$

\end{The}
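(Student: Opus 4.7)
For the upper bound $c_p\le\min(b,2^{-\alpha/p}a)$ (which handles both (a) and (b)), I would test two explicit matrices. Take $A_1=(u)\in \mathcal{H}^{(\beta)}_1$ with $u\in\supp(\nu_1)\cap\{\pm1\}$: then $\tr A_1^p=u^p=1$ (since $p$ is even) and $I(A_1)=b$. Take $A_2=2^{-1/p}(v\,e_1e_2^*+\bar v\,e_2e_1^*)\in\mathcal{H}^{(\beta)}_2$ with $v\in\supp(\nu_2)$: its eigenvalues are $\pm 2^{-1/p}$, so $\tr A_2^p=1$ and $I(A_2)=a\cdot 2^{-\alpha/p}$. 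Both matrices belong to $\mathcal D$, yielding the upper bound.

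For the lower bound of (a) with $\alpha\in(0,2)$: given $A\in\mathcal D$ with $\tr A^p = 1$, since $p$ is even $\|A\|_{S_p}^p=1$, and Schatten-norm monotonicity $\|A\|_{S_p}\le\|A\|_{S_2}$ for $p\ge 2$ combined with the Frobenius identity $\|A\|_{S_2}^2=\sum_{i,j}|A_{ij}|^2$ gives $\sum_{i,j}|A_{ij}|^2\ge 1$. For $\alpha\le 2$, $\ell^\alpha$-$\ell^2$ monotonicity on the entry sequence then yields $\sum_{i,j}|A_{ij}|^\alpha\ge 1$, that is $\sum_i|A_{ii}|^\alpha+2\sum_{i<j}|A_{ij}|^\alpha\ge 1$. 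Regrouping with the weights $b$ and $a$,
\begin{equation*}
I(A)\ge\min(b,a/2)\,\Bigl(\sum_i|A_{ii}|^\alpha+2\sum_{i<j}|A_{ij}|^\alpha\Bigr)\ge\min(b,a/2).
\end{equation*}

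For the sharper bound of (b) with $\alpha\in(0,1]$, I would decompose $A=D+O$ into diagonal and off-diagonal Hermitian parts. The Schatten-$p$ triangle inequality gives $(\tr D^p)^{1/p}+(\tr O^p)^{1/p}\ge(\tr A^p)^{1/p}=1$; writing $x=(\tr D^p)^{1/p}$ and $y=(\tr O^p)^{1/p}$, we have $x+y\ge 1$. The diagonal bound $\sum_i|A_{ii}|^\alpha\ge x^\alpha$ is elementary: $|A_{ii}|\le x$ and $(|A_{ii}|/x)^\alpha\ge(|A_{ii}|/x)^p$ for $\alpha\le p$. Granted the off-diagonal companion $\sum_{i<j}|O_{ij}|^\alpha\ge 2^{-\alpha/p}y^\alpha$ (proved below), one has $I(A)\ge b\,x^\alpha+a\cdot 2^{-\alpha/p}\,y^\alpha$, which for $\alpha\le 1$ is jointly concave in $(x,y)$ and monotone in each coordinate, so its minimum over $\{x+y\ge 1,\,x,y\ge 0\}$ is attained at one of the corners $(1,0)$ or $(0,1)$, giving $\min(b,2^{-\alpha/p}a)$.

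The principal obstacle is the off-diagonal subclaim. After rescaling to $\tr O^p=1$, I would chain two inequalities. First, Rotfel'd's subadditivity $\sum_k\sigma_k(B_1+B_2)^\alpha\le\sum_k\sigma_k(B_1)^\alpha+\sum_k\sigma_k(B_2)^\alpha$ (valid for $\alpha\in(0,1]$), iterated over the decomposition $O=\sum_{i<j}(O_{ij}e_ie_j^*+\overline{O_{ij}}\,e_je_i^*)$ whose rank-two summands have singular values $|O_{ij}|,|O_{ij}|,0,\dots$, yields $\sum_k\sigma_k(O)^\alpha\le 2\sum_{i<j}|O_{ij}|^\alpha$. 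Second, I would lower-bound $\sum_k|\lambda_k(O)|^\alpha$ by exploiting $\tr O=0$: setting $S=\sum_k\lambda_k^+=\sum_k|\lambda_k^-|$, the inequality $\sum_k x_k^p\le(\sum_k x_k)^p$ for $x_k\ge 0,\,p\ge 1$ gives $1=\sum(\lambda^+)^p+\sum|\lambda^-|^p\le 2S^p$, so $S\ge 2^{-1/p}$; while subadditivity $\sum x_k^\alpha\ge(\sum x_k)^\alpha$ for $\alpha\le 1$ yields $\sum_k|\lambda_k|^\alpha\ge 2S^\alpha\ge 2^{1-\alpha/p}$. Chaining, $2\sum_{i<j}|O_{ij}|^\alpha\ge 2^{1-\alpha/p}$, which gives the subclaim.
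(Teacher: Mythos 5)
Your proof is correct, and for the lower bounds it takes a genuinely different route from the paper, so let me compare. The upper bound is identical (testing the $1\times 1$ and the rank-two $2\times 2$ matrices). For the lower bound in (a), you avoid the entrywise-to-eigenvalue majorization inequality $\sum_{i,j}|A_{ij}|^\alpha\ge\sum_i|\lambda_i|^\alpha$ that the paper imports from \cite{Zhan}: you instead combine the Frobenius identity $\tr A^2=\sum_{i,j}|A_{ij}|^2$ with Schatten monotonicity $\|A\|_{S_2}\ge\|A\|_{S_p}$ and $\ell^\alpha$--$\ell^2$ monotonicity on the entries, which is more elementary. For the sharper lower bound in (b), the two approaches diverge substantially. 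The paper bounds entries by duality ($|A_{ii}|\le 1$, $|A_{ij}|\le 2^{-1/p}$), uses $|t|^\alpha\ge|t|$ on the unit ball to reduce from $\alpha$ to $1$, invokes the Zhan majorization inequality once more to pass to eigenvalues, and then runs a Lagrange-multiplier analysis with two cases ($\sum\lambda_i>0$ versus $\sum\lambda_i=0$). You instead split $A=D+O$ into its diagonal and off-diagonal Hermitian parts, couple the two Schatten-$p$ norms by the triangle inequality, extract the factor $2^{-\alpha/p}$ from the off-diagonal part by combining Rotfel'd subadditivity of $\sum\sigma_k(\cdot)^\alpha$ with the tracelessness $\tr O=0$, and finish with a one-line concave minimization of $bx^\alpha+2^{-\alpha/p}a\,y^\alpha$ over the segment $x+y=1$. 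Your version buys you a shorter and more transparent endgame (the extreme-point argument replaces the stationarity analysis), at the cost of the additional observation that the off-diagonal part has trace zero. Both are valid; the structural insight that the paper localizes through the multiplier system -- the competition is between putting all the mass on a single diagonal entry or a single pair of symmetric off-diagonal entries -- is the same one your corner argument makes explicit.
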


\begin{proof} From the proof of Theorem \ref{LDPtraceWG}, we know that 
\begin{equation} \label{constoptim} c_ p = \inf\left\{ I(A) :  1= \tr A^p , A\in \mathcal{D} \right\},\end{equation}
where $I$ is defined for any $A\in \cup_{n\geq 1} \mathcal{H}^{(\beta)}_n$, by $$ I\left( A\right) = b \sum_{i=1}^{+\infty} \left|A_{i,i} \right|^{\alpha} + a \sum_{i<j} \left| A_{i,j} \right|^{\alpha},$$
and $\mathcal{D} = \{ \cup_{n\in  \NN} \mathcal{H}^{(\beta)}_n : \forall i\leq j, \ A_{i,j} = 0 \text{ or } \  A_{i,j}/|A_{i,j}|  \in \mathrm{supp}(\nu_{i,j})\},$
with $\nu_{i,j} = \nu_1$ if $i=j$, and $\nu_{i,j} = \nu_{2}$ if $i<j$, where $\nu_1$ and $\nu_2$ are defined in definition \ref{WG}.

Note that 
$$ c_p \leq \min\left( I(s), I\left(\begin{array}{cc}
0 & 2^{-1/p}e^{i\theta}\\
2^{-1/p}e^{-i\theta} & 0
\end{array}
\right) \right),$$
where $s\in \supp(\nu_1)$, and $\theta \in \supp(\nu_2)$. Thus,
$$c_p \leq \min\left( b, 2^{-\alpha/p} a \right),$$
which proves the upper bound in cases (a) and (b).

On the other hand, we have
\begin{align*}
c_p &\geq \inf\left\{ b \sum_{i=1}^{+\infty} |A_{i,i}|^{\alpha} +  \frac{a}{2} \sum_{i\neq j} |A_{i,j}|^{\alpha} : A\in \cup_{n\in \NN} \mathcal{H}^{(\beta)}_n, 1 =\tr A^p \right\}\\
& \geq \min\left(b, \frac{a}{2}\right) \inf\left\{\sum_{i,j}\left|A_{i,j}\right|^{\alpha} : A\in \cup_{n\in \NN} \mathcal{H}^{(\beta)}_n : \tr A^p =1 \right\}.
\end{align*}
Since $\alpha \in (0,2)$, we know from \cite[Theorem 3.32]{Zhan} that for any $A\in \mathcal{H}^{(\beta)}_n$,
\begin{equation} \label{eqZhan} \sum_{i,j}\left|A_{i,j}\right|^{\alpha}  \geq \sum_{i=1}^n |\lambda_i|^{\alpha},\end{equation}
where $\lambda_1,...,\lambda_n$ are the eigenvalues of $A$. As $\alpha/p\leq 1$, we have
$$\sum_{i=1}^n |\lambda_i|^{\alpha} \geq \Big( \sum_{i=1}^n |\lambda_i|^p \Big)^{\alpha/p} = \Big(\tr|A|^p \Big)^{\alpha/p}\geq \Big|  \tr A^p \Big|^{\alpha/p}.$$
Thus, if $\tr A^p = 1$, we have
$$ \sum_{i,j}\left|A_{i,j}\right|^{\alpha}  \geq 1.$$
We can deduce that 
$$c_p  \geq \min \Big( b,\frac{a}{2} \Big),$$
which proves the lower bound of case (b).\\

Assume now $\alpha \in (0,1)$ and $p$ is even. If $A\in \mathcal{H}^{(\beta)}_n$ is such that $\tr A^p =1$, then
$$ \sup_{\tr |B|^q =1} \tr AB = 1,$$
with $q\geq 1$ such that $\frac{1}{p} + \frac{1}{q} =1$. Thus, we can deduce that 
$$ \forall i \in \{1,...,n\}, |A_{i,i}|\leq 1, \ \forall i,j\in \{1,...,n\}, i \neq j, |A_{i,j}|\leq 2^{-1/p}.$$
Then,
\begin{align}
c_p &\geq \inf\Big\{ b \sum_{i=1}^{+\infty} |A_{i,i}|^{\alpha} +  \frac{a}{2} \sum_{i\neq j} |A_{i,j}|^{\alpha} : A\in \cup_{n\in \NN} \mathcal{H}^{(\beta)}_n, 1 =\tr A^p \Big\}\label{minocp}\\
& \geq \min\Big( b,  2^{-\frac{\alpha}{p}}a\Big) \inf\Big\{  \sum_{i=1}^{+\infty} |A_{i,i}|^{\alpha} + \frac{1}{2} \sum_{i\neq j} |2^{\frac{1}{p}}A_{i,j}|^{\alpha} : A\in \cup_{n\in \NN} \mathcal{H}^{(\beta)}_n, 1 =\tr A^p \Big\}\nonumber\\
& \geq  \min\left( b, 2^{-\frac{\alpha}{p}}a\right) \inf\Big\{  \sum_{i=1}^{+\infty} |A_{i,i}| + \frac{1}{2} \sum_{i\neq j}| 2^{\frac{1}{p}}A_{i,j}| : A\in \cup_{n\in \NN} \mathcal{H}^{(\beta)}_n, 1 =\tr A^p \Big\},\nonumber
\end{align}
where we used in the last inequality the fact the $|A_{i,i}|\leq 1$, and $|A_{i,j}| \leq 2^{-1/p}$ for any $i\neq j$. Thus,
$$
c_p \geq  \min\left( b, 2^{-\frac{\alpha}{p}}a\right) \inf\Big\{  \left( 1-2^{\frac{1}{p}-1} \right)\sum_{i=1}^{+\infty} |A_{i,i}| + 2^{\frac{1}{p} -1} \sum_{i, j} |A_{i,j}| : A\in \cup_{n\in \NN} \mathcal{H}^{(\beta)}_n, 1 =\tr A^p \Big\}.
$$
Using again \cite[Theorem 3.36]{Zhan}, and the triangular inequality, we get
$$c_p \geq  \min\left( b, 2^{-\frac{\alpha}{p}}a\right) \inf_{n\geq 1} \inf\Big\{  \left( 1-2^{\frac{1}{p}-1} \right)\Big|\sum_{i=1}^{n} \lambda_i \Big| + 2^{\frac{1}{p} -1} \sum_{i=1}^n |\lambda_i| : A\in \mathcal{H}^{(\beta)}_n, \sum_{i=1}^n \lambda_i^p =1 \Big\}.$$
Let $n\geq 1$. We consider the optimization problem
$$ \inf\Big\{  \left( 1-2^{\frac{1}{p}-1} \right)\Big|\sum_{i=1}^{n} \lambda_i \Big| + 2^{\frac{1}{p} -1} \sum_{i=1}^n |\lambda_i| : A\in \mathcal{H}^{(\beta)}_n, \sum_{i=1}^n \lambda_i^p =1 \Big\}.$$
Denote for all $\lambda\in \RR^n$,
$$ \phi(\lambda) =   \left( 1-2^{\frac{1}{p}-1} \right)\Big|\sum_{i=1}^{n} \lambda_i \Big| + 2^{\frac{1}{p} -1} \sum_{i=1}^n |\lambda_i| .$$
Compactness and continuity arguments show that the infimum is achieved at some $\lambda \in \RR^n$. At the price of permuting the coordinates of $\lambda$, and taking the opposite of $\lambda$, which does not change the value of $\phi(\lambda)$, we can assume that $\lambda = \left( \lambda_1,...,\lambda_m,0,...,0\right)$, with $\lambda_1\neq 0,...,\lambda_m\neq0$ such that $\sum_{i=1}^m \lambda_i \geq 0$. Assume first that $\sum_{i=1}^m \lambda_i>0$. The multipliers rule (see \cite[Theorem 9.1]{Clarke}) yields that there is some $\gamma>0$, such that for any $i\in \{1,...,m\}$,
\begin{equation} \label{stateq} \left( 1-2^{\frac{1}{p}-1} \right) + 2^{\frac{1}{p} -1} \sg(\lambda_i) = \gamma \lambda_i^{p-1}.\end{equation}
Multiplying the above inequality by $\lambda_i$, and summing over all $i\in \{1,...,m\}$, we get
\begin{equation}\label{valuephi} \gamma = \phi(\lambda).\end{equation}
From \eqref{stateq}, we have for all $\in \{1,...,m\}$,
$$  \lambda_i = 
\begin{cases}
\gamma^{-\frac{1}{p-1}} & \text{ if } \lambda_i>0,\\
-\gamma^{-\frac{1}{p-1}} \left(2^{\frac{1}{p}} -1\right)^{\frac{1}{p-1}} & \text{ if } \lambda_i <0.
\end{cases}$$
Let $k$ denote the number of positive $\lambda_i$'s, and $l$ the number of negative $ \lambda_i$'s. As $\sum_{i=1}^m \lambda_i >0$, we have $k\geq 1$. Since $\sum_{i=1}^m \lambda_i^p = 1$, we have
$$ \gamma^{\frac{p}{p-1}} = k + l \left(2^{\frac{1}{p}} -1\right)^{\frac{p}{p-1}}\geq 1,$$
as $k\geq 1$. Thus, $\phi(\lambda) \geq1$.

Assume now that $\sum_{i=1}^m \lambda_i= 0$. Then the multipliers rule asserts that there are some $t \in [-1,1]$ and $\gamma$, such that $(t,\gamma) \neq (0,0)$, and for all $i\in \{1,...,m\}$,
$$ \Big( 1 -2^{\frac{1}{p}-1} \Big)t + 2^{\frac{1}{p}-1} \sg(\lambda_i) = \gamma \lambda_i^{p-1}.$$
At the price of changing $\lambda$ to $-\lambda$, we can assume $t \geq0$. As in the previous case, multiplying by $\lambda_i$ in the above equation and summing over $i$, yields $\phi(\lambda)=\gamma$. Note that since $\phi(1,0,...,0) =1$, we can assume $\gamma \leq 1$. 
We can write for any $i\in \{1,...,m\}$,
$$ \lambda_i =\begin{cases}
-\gamma^{-\frac{1}{p-1}} \Big( 2^{\frac{1}{p}-1} - \Big( 1-2^{\frac{1}{p}-1}\Big)t \Big)^{\frac{1}{p-1}} & \text{ if } \lambda_i<0,\\
\gamma^{-\frac{1}{p-1}} \Big( 2^{\frac{1}{p}-1} +\Big( 1-2^{\frac{1}{p}-1}\Big)t \Big)^{\frac{1}{p-1}} & \text{ if } \lambda_i>0.
\end{cases}$$
Let $k$ denotes the number of positive coordinates of $\lambda$, and by $l$ the number of negative coordinates. As $\sum_{i=1}^m \lambda_i = 0$, we have $k,l\geq 1$, and 
$$  k\Big( 2^{\frac{1}{p}-1} +\Big( 1-2^{\frac{1}{p}-1}\Big)t \Big)^{\frac{1}{p-1}}= l \Big( 2^{\frac{1}{p}-1} - \Big( 1-2^{\frac{1}{p}-1}\Big)t \Big)^{\frac{1}{p-1}}.$$
But then,
$$\phi(\lambda) =2^{\frac{1}{p}}k\gamma^{-\frac{1}{p-1}}\Big( 2^{\frac{1}{p}-1} +\Big( 1-2^{\frac{1}{p}-1}\Big)t \Big)^{\frac{1}{p-1}}\geq 2^{\frac{1}{p}} 2^{-\frac{1}{p}} =1,$$
as $\gamma\leq 1$. 
As $\phi(1,0,..,0) = 1$, we can conclude
$$ \inf \left\{ \phi(\lambda) : || \lambda ||_p=1 \right\}  =1.$$
This yields,
$$c_p \geq \min\left( b,2^{-\frac{\alpha}{p}}a\right),$$
in the case where $p$ is even. 

\end{proof}

%
%

\newpage

\bibliographystyle{plain}
\bibliography{traces.bib}{}

\end{document}